\documentclass{article}

\usepackage{bold-extra}
\usepackage{mathptmx,amsmath,amscd,amssymb,amsthm,xspace}
\usepackage[all,tips]{xy}
\usepackage[dvips]{graphicx}
\usepackage{graphics,epsfig,wrapfig,verbatim,syntonly}
\usepackage{hyperref,color, url,fancyhdr,tikz-cd}
\usepackage{adjustbox}
\usepackage{soul}
\usepackage{textcomp}
\usepackage{tikz}
\usetikzlibrary{arrows.meta}
\usepackage{circuitikz}
\usepackage{menukeys}

\usepackage{comment}
\usepackage{quiver}
\usepackage{mathtools}
\usepackage{placeins}
\usepackage{filehook}
\usepackage[T1]{fontenc}
\usepackage{listings}
\usepackage{etoc}
\usepackage{makeidx}

\theoremstyle{plain}

\newtheorem{theorem}{Theorem}[section]
\newtheorem{definition}[theorem]{Definition}
\newtheorem{proposition}[theorem]{Proposition}

\ifcsname lemma\endcsname\else\newtheorem{lemma}[theorem]{Lemma}\fi
\ifcsname corollary\endcsname\else\newtheorem{corollary}[theorem]{Corollary}\fi
\newtheorem{maintheorem}[theorem]{Theorem}
\newtheorem{maincorollary}[theorem]{Corollary}
\newtheorem*{unnumberedquestion}{Question}
\theoremstyle{remark}
\ifcsname remark\endcsname\else\newtheorem{remark}[theorem]{Remark}\fi
\ifcsname example\endcsname\else\newtheorem{example}[theorem]{Example}\fi
\theoremstyle{plain}

\newcommand{\disc}{\operatorname{disc}}

\newcommand{\Ram}{\operatorname{Ram}}

\newcommand{\M}{\operatorname{M}}

\providecommand{\Ocal}{\mathcal{O}}
\providecommand{\afrak}{\mathfrak{a}}
\providecommand{\pfrak}{\mathfrak{p}}
\providecommand{\tr}{\operatorname{tr}}
\providecommand{\trd}{\operatorname{trd}}
\providecommand{\nrd}{\operatorname{nrd}}
\providecommand{\SL}{\operatorname{SL}}
\providecommand{\GL}{\operatorname{GL}}
\providecommand{\PSL}{\operatorname{PSL}}
\providecommand{\PGL}{\operatorname{PGL}}

\providecommand{\vol}{\operatorname{vol}}

\providecommand{\tbt}[4]{\begin{pmatrix} #1 & #2 \\ #3 & #4 \end{pmatrix}}
\newcommand{\slfrak}{\mathfrak{sl}}
\newcommand{\Cl}{\operatorname{Cl}}
\newcommand{\CL}{\operatorname{CL}}
\newcommand{\LS}{\operatorname{L}}
\newcommand{\kinf}{K_\infty}

\makeindex

\begin{document}
\setcounter{tocdepth}{3}
\clearpage{}


\title{Isospectral congruence manifolds, spectral rigidity, \\ and a converse to Vign\'eras}
\author{Justin Katz, D.~B.~McReynolds}
\maketitle


\begin{abstract}
\noindent Let $B$ be a quaternion division algebra over a number field $k$ that gives rise to arithmetic Fuchsian or Kleinian groups, and let $\Ocal$ be a maximal order of $B$. We show that every closed orientable Riemannian manifold isospectral to a principal congruence hyperbolic surface $X_\Ocal(\afrak)$ is isometric to $X_{\Ocal'}(\afrak)$ for some maximal order $\Ocal'$ of $B$, provided that no prime factor of $\afrak$ ramifies in $B$ and that $\afrak$ satisfies a mild condition at the primes over $2$. This is a converse to the construction of Vign\'eras, which produces isospectral manifolds from maximal orders of different types. In particular the spectral genus of $X_\Ocal(\afrak)$ has at most $t(B)$ elements, and $X_\Ocal(\afrak)$ is spectrally rigid when the type number $t(B)$ is one. In dimension three we prove the same statements for manifolds that, in addition, have the same complex lengths of closed geodesics. The proof rests on a local result that characterizes, by traces alone, the subgroups of $\SL(2)$ over a $p$--adic ring that are conjugate into a principal congruence subgroup by traces. As applications, every principal congruence Hurwitz surface is spectrally rigid, among them Klein's quartic, the Fricke--Macbeath surface and the surfaces of the first Hurwitz triplet, which partially answers a question of Reid. 
\end{abstract}

\section{Introduction}

Two closed Riemannian manifolds are \textbf{isospectral} if their Laplace--Beltrami operators have the same eigenvalue spectrum with multiplicities. The \textbf{spectral genus} of a closed orientable Riemannian manifold $X$ is the set of isometry classes of closed orientable Riemannian manifolds isospectral to $X$, and $X$ is \textbf{spectrally rigid} if its spectral genus is a single point. Since Milnor's isospectral flat tori \cite{Milnor-[EigenvaluesLaplaceOperator]1964}, general constructions of isospectral non-isometric manifolds have been given by Vign\'eras \cite{Vigneras-[VarietesRiemanniennesIsospectrales]1980}, Sunada \cite{Sunada-[RiemannianCoveringsIsospectral]1985} and Gordon \cite{Gordon-[IsospectralClosedRiemannian]1993}. On the other hand, spectral genera are generally small. They are finite for hyperbolic surfaces \cite{McKean-[SelbergsTraceFormula]1972}, compact for metrics on a surface \cite{Osgood.Phillips.Sarnak-[CompactIsospectralSets]1988}, and contain finitely many topological types under curvature bounds \cite{Perry.Petersen.Brooks-[CompactnessFinitenessTheorems]1992}. By Wolpert \cite{Wolpert-[LengthSpectraModuli]1979}, a generic hyperbolic surface is spectrally rigid, yet few explicit spectrally rigid surfaces are known. Natural candidates are the \textbf{Hurwitz surfaces}, the closed Riemann surfaces of genus $g\geq 2$ with $84(g-1)$ conformal automorphisms, and Reid asked the following \cite[Question 7.4]{Reid-[TracesLengthsAxes]2014}.

\begin{unnumberedquestion}
Is a Hurwitz surface determined by its spectrum?
\end{unnumberedquestion}

The first isospectral non-isometric hyperbolic surfaces and $3$--manifolds are due to Vign\'eras. They are arithmetic and come from a quaternion algebra $B$ whose maximal orders, although all locally conjugate, fall into $t(B)>1$ global conjugacy classes; this is called the \textbf{type number} of $B$. In the absence of selectivity \cite{Chinburg.Friedman-[EmbeddingTheoremQuaternion]1999}, maximal orders of different types have the same trace data, and the Selberg trace formula turns them into isospectral, generally non-isometric, manifolds. Our main result is a converse to Vign\'eras. 


Let $k$ be a number field with ring of integers $R$, and let $B$ be a quaternion division algebra over $k$. We say that $B$ is of \textbf{Fuchsian type} if $k$ is totally real and $B$ is unramified at exactly one real place, and of \textbf{Kleinian type} if $k$ has exactly one complex place and $B$ is ramified at every real place. Put $\kinf = \mathbf{R}$ or $\mathbf{C}$ accordingly, and let $\psi\colon B\to\M(2,\kinf)$ be the embedding given by the unramified archimedean place. For a maximal order $\Ocal$ and a nonzero ideal $\afrak$ of $R$, let $\Gamma_\Ocal(\afrak)$ be the image in $\PSL(2,\kinf)$ of
\begin{align*}
\Ocal^1(\afrak) = \{x \in \Ocal : \nrd(x) = 1,\ x \equiv 1 \bmod \afrak\Ocal\},
\end{align*}
and put $X_\Ocal(\afrak) = \Gamma_\Ocal(\afrak)\backslash\mathbf{H}^n$. We assume:
\begin{enumerate}
\item[(H1)] The algebra $B$ is of Fuchsian or Kleinian type and $\Ocal$ is a maximal order of $B$.
\item[(H2)] No prime factor of $\afrak$ ramifies in $B$, and every prime $\pfrak\mid\afrak$ over $2$ satisfies $e(\pfrak\mid 2) = 1$ or $v_\pfrak(\afrak)\leq e(\pfrak\mid 2)$.
\item[(H3)] The group $\Gamma_\Ocal(\afrak)$ is torsion free.
\end{enumerate}
If $2$ is unramified in $k$, then (H2) only asks that $\afrak$ be prime to $\disc(B)$. Hypothesis (H3) holds, for instance, whenever $\afrak$ is divisible by a prime over a rational prime unramified in $k$ (Lemma \ref{lemma:torsion-free}).

\begin{maintheorem}\label{thm:A}
Assume \textup{(H1)--(H3)}, with $B$ of Fuchsian type. Every closed orientable Riemannian manifold isospectral to $X_\Ocal(\afrak)$ is isometric to $X_{\Ocal'}(\afrak)$ for some maximal order $\Ocal'$ of $B$. In particular the spectral genus of $X_\Ocal(\afrak)$ has at most $t(B)$ elements.
\end{maintheorem}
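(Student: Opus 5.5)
We will prove Theorem~\ref{thm:A} by showing first that any manifold isospectral to $X_\Ocal(\afrak)$ is an arithmetic hyperbolic surface coming from $B$, and then pinning down its fundamental group locally at every prime. Let $Y$ be a closed orientable Riemannian manifold isospectral to $X=X_\Ocal(\afrak)$. The heat invariants fix the dimension, the volume and the total scalar curvature. In dimension two, Gauss--Bonnet together with the $a_2$ coefficient $\int K^2$ forces constant curvature $-1$: Cauchy--Schwarz is sharp exactly when $K$ is constant. So $Y=\Gamma\backslash\mathbf{H}^2$ with $\Gamma$ torsion free, of the same genus as $X$. Huber's theorem converts equality of Laplace spectra into equality of length spectra with multiplicities. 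Hence the multisets of $|\tr\gamma|$ for $\Gamma$ and $\Gamma_\Ocal(\afrak)$ coincide.

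Next we show that $\Gamma$ is arithmetic and derived from $B$. The traces of $\Gamma$ lie in $R$, and $k=\mathbf{Q}(\tr\Gamma^{(2)})$ by Takeuchi's characterization, since the trace sets agree and the invariant trace field is read off from traces of squares. Reid's theorem (isospectral arithmetic surfaces have isomorphic invariant quaternion algebras) then identifies the algebra $A\Gamma^{(2)}$ with $B$. One way to get there is through Hilbert symbols computed from traces, using Chebotarev/density arguments on lengths. Because the trace set is contained in that of $\Gamma_\Ocal(\afrak)$, $\Gamma$ itself lies in $B^1$ rather than only its square subgroup: elements with $\tr\equiv 2 \bmod \afrak$ constraints force $\Gamma=\Gamma^{(2)}$ via (H2)--(H3). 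Then $R[\Gamma]$ is an order in $B$ contained in some maximal order $\Ocal'$, so $\Gamma\subset \Ocal'^1$. The covolume equality, together with the covolume formula of Borel for $\Ocal'^1$, shows that $[\Ocal'^1:\Gamma]=[\Ocal^1:\Ocal^1(\afrak)]$.

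The core step is local. For every $\pfrak\mid\afrak$, $\pfrak$ is split in $B$ by (H2), so $\Gamma\subset\SL(2,R_\pfrak)$ after completing. Every trace of $\Gamma$ satisfies $\tr\gamma\equiv 2 \bmod \afrak R_\pfrak$, and more precisely the local trace conditions satisfied by a principal congruence subgroup hold. Here we invoke the paper's local theorem: a subgroup of $\SL(2,R_\pfrak)$ all of whose traces look like those of the principal congruence subgroup of level $\pfrak^{v_\pfrak(\afrak)}$ is conjugate into it, under the condition at primes over $2$ in (H2). This yields a maximal order $\Ocal''$, locally equal to $\Ocal'$ away from $\afrak$ and to the conjugated order at $\pfrak\mid\afrak$, with $\Gamma\subset\Ocal''^1(\afrak)$. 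Comparing covolumes, the inclusion is an equality, since both groups have the same index in a maximal order's unit group and local indices are type-independent. So $\Gamma=\Gamma_{\Ocal''}(\afrak)$ and $Y\cong X_{\Ocal''}(\afrak)$. Conjugate orders give isometric quotients, so the spectral genus has at most $t(B)$ elements.

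The main obstacle is the local trace characterization at dyadic primes, together with the passage from $\Gamma^{(2)}$ to $\Gamma$ inside $B^1$ (sign issues in $\PSL$ versus $\SL$). We will handle the sign issue by lifting $\Gamma$ to $\SL(2,\mathbf{R})$, choosing the lift with traces $\equiv 2 \bmod\afrak$, which is consistent since $-1\notin$ level $\afrak$ lifts.
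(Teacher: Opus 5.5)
Your outline has the same architecture as the paper's proof: heat invariants and Huber, Reid's commensurability theorem, the order spanned by the group, Theorem~\ref{prop:local} at each $\pfrak\mid\afrak$, local patching, and a covolume comparison. However, the step that puts all of $\tilde\Gamma$, not just $\tilde\Gamma^{(2)}$, inside $B^1$ is wrong as written. You claim that the trace constraints together with (H2)--(H3) force $\Gamma=\Gamma^{(2)}$. This is false for every closed surface group, since $\Gamma/\Gamma^{(2)}\cong(\mathbf{Z}/2)^{2g}$ with $g\geq 2$; it already fails for $\Gamma_\Ocal(\afrak)$ itself. Without a replacement, everything afterwards applies only to $\Gamma^{(2)}$, a subgroup of index $2^{2g}$.

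The correct argument compares fields rather than groups, and uses neither (H2) nor (H3). Since $\tr\tilde\Gamma\subseteq R\subseteq k$ and $k\Gamma=k$ by commensurability, one has $k=\mathbf{Q}(\tr\tilde\Gamma^{(2)})\subseteq\mathbf{Q}(\tr\tilde\Gamma)\subseteq k$. So $A\Gamma=A_0\tilde\Gamma^{(2)}\subseteq A_0\tilde\Gamma$ are quaternion algebras over the same field $k$, and therefore coincide. Thus $A_0\tilde\Gamma\cong B$, and Skolem--Noether provides $g_1\in\GL(2,\mathbf{R})$ with $g_1\tilde\Gamma g_1^{-1}\subseteq\psi(B^1)$. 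Only then does Lemma~\ref{lemma:order-generated} give your maximal order $\Ocal'$.

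The local step also needs two repairs. First, the input to Theorem~\ref{prop:local} is $(\mathrm{T}_{n_\pfrak})$, that is $\tr\lambda\equiv\pm 2\bmod\pfrak^{2n_\pfrak}$. This comes from $\trd\gamma-2\in\afrak^2$ (Lemma~\ref{lemma:congruence-traces}); a congruence modulo $\afrak$ is too weak. Second, and more importantly, the patched order $\Ocal''$ satisfies $\tilde\Gamma\subseteq\pm\Ocal''^1(\afrak)$ only if, for each $\lambda$, the local signs $\epsilon_\pfrak$ produced at the different primes $\pfrak\mid\afrak$ agree. Your homomorphic lift is asserted rather than constructed, and your justification fails in two ways. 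If $2\in\afrak$, then $-1$ does lie in $\Ocal^1(\afrak)$. If $e(\pfrak\mid 2)=1$ and $n_\pfrak=2$ (for instance $N=4$ over $\mathbf{Q}$), traces modulo $\afrak$ cannot see the sign at all.

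No lift is needed. For $\lambda\in\tilde\Gamma$, take the global sign $\epsilon$ with $\tr\lambda=\epsilon\tr\gamma$, so that $\tr\lambda-2\epsilon\in\afrak^2$. At a prime with $n_\pfrak>v_\pfrak(2)$, compare with $\tr\lambda\equiv 2\epsilon_\pfrak\bmod\pfrak^{2n_\pfrak}$; this forces $\epsilon_\pfrak=\epsilon$, since otherwise $4\in\pfrak^{2n_\pfrak}$. At the remaining primes $-1\in 1+\pfrak^{n_\pfrak}\Ocal''_\pfrak$, so the sign is irrelevant. Hence $\lambda-\epsilon\in\afrak\Ocal''$, and your covolume comparison then finishes the proof exactly as in the paper.
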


In dimension three the Laplace spectrum determines the lengths of closed geodesics but not their complex lengths. Write $\CL(M)$ for the set of complex lengths of closed geodesics of a hyperbolic $3$--manifold $M$.

\begin{maintheorem}\label{thm:B}
Assume \textup{(H1)--(H3)}, with $B$ of Kleinian type, and let $M$ be a closed orientable Riemannian manifold isospectral to $X = X_\Ocal(\afrak)$. Then $M$ is hyperbolic, and if $\CL(M)\subseteq\CL(X)$, then $M$ is isometric to $X_{\Ocal'}(\afrak)$ for some maximal order $\Ocal'$ of $B$. In particular this holds for every closed orientable hyperbolic $3$--manifold $M$ with the same complex length spectrum as $X$.
\end{maintheorem}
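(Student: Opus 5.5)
We will prove Theorem~\ref{thm:B} in three stages: first showing that $M$ is hyperbolic with fundamental group commensurable with $\Gamma_\Ocal(\afrak)$, then showing that the traces of $\pi_1(M)$ lie in the right local congruence classes, and finally using the local trace characterization announced in the abstract to conjugate $\pi_1(M)$ into some $\Gamma_{\Ocal'}(\afrak)$. Volume then upgrades the inclusion to equality.

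\emph{Hyperbolicity and arithmeticity.} The heat invariants of the Laplacian determine the dimension, the volume and the total scalar curvature. In dimension three the relevant combination of heat invariants forces constant curvature $-1$; this is the classical argument of Tanno type, valid for $n\le 5$. Hence $M=\Gamma\backslash\mathbf{H}^3$ with $\vol(M)=\vol(X)$. The Selberg trace formula shows that the Laplace spectrum determines the length spectrum with multiplicities. The hypothesis $\CL(M)\subseteq\CL(X)$ then says that every $\gamma\in\Gamma$ has $\pm\tr\gamma\in\tr\Gamma_\Ocal(\afrak)$.

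Consequently the traces of $\Gamma$ lie in $R$ and generate a field inside $k$. The invariant trace field and the invariant quaternion algebra of $\Gamma$ are those of $\Gamma_\Ocal(\afrak)$, namely $k$ and $B$. Here we use that $B$ is determined by the Hilbert symbols built from traces, and that by (H1) $B$ is ramified at every real place. By the Maclachlan--Reid trace criterion, $\Gamma$ is arithmetic and lies, after conjugation, in $\psi(\Ocal_1^1)$ for some order $\Ocal_1$. Its image $\Gamma^{(2)}$ under squares lies in $B^1$. The final bullet of the statement then follows from the main claim, since equal complex length spectra in particular give equal Laplace spectra via the trace formula.

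\emph{Local step.} The aim is to show $\Gamma\subseteq\psi(\Ocal'^1(\afrak))$ for some maximal $\Ocal'$. The order $\mathcal{D}=R[\Gamma]$ generated by a lift of $\Gamma$ is an order in $B$. It suffices to show, for each prime $\pfrak\mid\afrak$, that $\mathcal{D}_\pfrak^1$ is $\GL(2,k_\pfrak)$-conjugate into the principal congruence subgroup of level $\pfrak^{v_\pfrak(\afrak)}$ in $\SL(2,R_\pfrak)$; at these primes $B_\pfrak\cong\M(2,k_\pfrak)$ by (H2). At all other primes, $\mathcal{D}_\pfrak$ lies in some maximal order.

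Patching the local maximal orders then gives a global maximal order $\Ocal'$ with $\Gamma\subseteq\Ocal'^1(\afrak)$ modulo the sign ambiguity. The sign is removed because the trace condition $\tr\gamma\equiv 2$ modulo $\afrak$ is imposed up to sign on each element, and (H3) forces a coherent lift. The local input is precisely the paper's $p$--adic result: a subgroup of $\SL(2)$ over $R_\pfrak$ all of whose elements have traces congruent to $2$ modulo $\pfrak^m$, together with the analogous condition on traces of products, is conjugate into the level-$\pfrak^m$ congruence subgroup. This is where the condition on primes over $2$ in (H2) enters.

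\emph{Volume.} Since $\Gamma\subseteq\Gamma_{\Ocal'}(\afrak)$, and all maximal orders are locally conjugate, we have
\[
\vol(X_{\Ocal'}(\afrak))=\vol(X)=\vol(M).
\]
Hence the index is one, and $M$ is isometric to $X_{\Ocal'}(\afrak)$.

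\emph{Main obstacle.} I expect the main difficulty to be the local step. The set $\CL(X)$ only records traces of individual elements up to sign, so the hypothesis gives congruences $\tr\gamma\equiv\pm 2$ element by element. The problem is to turn these into a simultaneous conjugation of the whole group, which must be handled with care for wild primes over $2$. I would first try to reduce this to the invoked local theorem by showing that the trace congruences hold for all products in the subgroup, which they do because they hold for all elements of $\Gamma$.
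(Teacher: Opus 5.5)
\textbf{The gap is the identification of $A\Gamma$ with $B$.} Everything afterwards depends on it: it is what lets you conjugate $\tilde\Gamma$ into $\psi(B^1)$ and speak of the order $R[\Gamma]$ in $B$. Your justification, that ``$B$ is determined by the Hilbert symbols built from traces,'' does not work. One has $A\Gamma\cong\left(\frac{\tr^2 g-4,\ \tr[g,h]-2}{k\Gamma}\right)$ for suitable $g,h\in\tilde\Gamma^{(2)}$, so the symbol depends on the \emph{joint} traces of $g$, $h$ and $[g,h]$. The hypothesis $\CL(M)\subseteq\CL(X)$ only says that each trace separately is $\pm$ the trace of \emph{some} element of $\Ocal^1(\afrak)$, with no relation among the elements chosen. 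In particular, this one-sided, elementwise data does not prevent $A\Gamma$ from ramifying at extra finite primes where $B$ splits. At such primes the non-central elements of $\tilde\Gamma^{(2)}$ would merely have characteristic polynomials irreducible over $k_\pfrak$, which is compatible with their traces occurring in $\Ocal^1(\afrak)$.

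You also never show that $k\Gamma$ is not real, which is needed both for $k\Gamma=k$ and for the arithmeticity criterion. Instead you lean on the unproved identification to get arithmeticity. The paper runs the other way. From traces alone it proves $\Gamma$ is arithmetic via Theorem~\ref{thm:takeuchi}(2):
\begin{itemize}
\item the traces lie in $R$;
\item $k\Gamma$ is nonreal, since otherwise the lattice $\Gamma^{(2)}$ would be conjugate into $\PSL(2,\mathbf{R})$, so $k\Gamma$ has exactly one complex place;
\item the traces of $\tilde\Gamma^{(2)}$ are bounded at real places because $B$ is ramified there.
\end{itemize}
It then invokes Theorem~\ref{thm:chlr}: isospectral arithmetic hyperbolic $3$--manifolds are commensurable. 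This is where isospectrality is used a second time, beyond $\CL(M)\subseteq\CL(X)$. Commensurability gives $k\Gamma=k$ and $A\Gamma\cong B$. Since $\tr\tilde\Gamma\subseteq R$, it also gives $A_0\tilde\Gamma=A\Gamma$, so all of $\tilde\Gamma$, not just $\tilde\Gamma^{(2)}$, conjugates into $\psi(B^1)$.

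\textbf{The local step also needs correcting, though the right ingredients are available.} As you state it, the local theorem is false. You have traces $\equiv 2 \bmod \pfrak^m$ (or ``$\tr\gamma\equiv 2$ modulo $\afrak$'') implying conjugacy into level $\pfrak^m$. For $p$ odd, every element of $\SL(2,R_\pfrak,\pfrak)$ has trace $\equiv 2\bmod\pfrak^2$. Yet $\tbt{1+\varpi}{0}{0}{(1+\varpi)^{-1}}$ violates $(\mathrm{T}_2)$, so this group is not conjugate into $\pm\SL(2,R_\pfrak,\pfrak^2)$. Theorem~\ref{prop:local} needs $\tr h-2\epsilon_h\in\pfrak^{2n}$. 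That hypothesis holds because of Lemma~\ref{lemma:congruence-traces}: $\trd\lambda-2\epsilon\in\afrak^2$ for every $\lambda$ in the lift of $\Gamma$.

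The theorem should be applied to $\Gamma$ itself, viewed in $\SL(2,R_\pfrak)$ through a maximal order containing it. It should not be applied to $\mathcal{D}_\pfrak^1$: the trace hypothesis says nothing directly about that larger group, and it always contains $-1$, so it cannot lie in the unsigned congruence subgroup once $n>v_\pfrak(2)$.

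Finally, the local signs are not made coherent by (H3); torsion-freeness plays no role there. The mechanism is that $\epsilon$ in $\trd\lambda-2\epsilon\in\afrak^2$ is a single global sign. Suppose $\lambda\in\epsilon_\pfrak(1+\pfrak^{n_\pfrak}\Ocal'_\pfrak)$ with $n_\pfrak>v_\pfrak(2)$. Then \eqref{eq:local-congruence-trace} forces $\epsilon_\pfrak=\epsilon$, since otherwise $4\in\pfrak^{2n_\pfrak}$. When $n_\pfrak\le v_\pfrak(2)$ the two cosets coincide. Only with this do the local memberships assemble to $\lambda\in\pm\Ocal'^1(\afrak)$.
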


The last assertion holds because the complex length spectrum determines the Laplace spectrum \cite{Salvai-[LaplaceComplexLength]2002,Kelmer-[RefinementStrongMultiplicity]2014}. When $t(B) = 1$, all the manifolds $X_{\Ocal'}(\afrak)$ are isometric to $X_\Ocal(\afrak)$, and we recover spectral rigidity.

\begin{maincorollary}\label{cor:C}
Assume \textup{(H1)--(H3)} and $t(B) = 1$. If $B$ is of Fuchsian type, then $X_\Ocal(\afrak)$ is spectrally rigid. If $B$ is of Kleinian type, then $X_\Ocal(\afrak)$ is determined up to isometry, among closed orientable hyperbolic $3$--manifolds, by its complex length spectrum.
\end{maincorollary}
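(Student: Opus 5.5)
The corollary is a direct consequence of Theorems \ref{thm:A} and \ref{thm:B}; the only new input is the observation that $t(B)=1$ forces all the manifolds $X_{\Ocal'}(\afrak)$ to be isometric to $X_\Ocal(\afrak)$. I would prove that observation first and then read off both statements.

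\emph{Step 1.} Suppose $t(B)=1$. Then any maximal order $\Ocal'$ of $B$ is conjugate to $\Ocal$, so $\Ocal' = y\Ocal y^{-1}$ for some $y\in B^\times$. Conjugation by $y$ preserves the reduced norm. It also carries $\afrak\Ocal$ to $\afrak\Ocal'$, since $\afrak\subset R$ is central. Hence $\Ocal'^1(\afrak) = y\,\Ocal^1(\afrak)\,y^{-1}$. Applying $\psi$, we get $\Gamma_{\Ocal'}(\afrak) = g\,\Gamma_\Ocal(\afrak)\,g^{-1}$, where $g$ is the image of $\psi(y)$ in $\PGL(2,\kinf)$.

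\emph{Step 2.} We need $g$ to act as an isometry of $\mathbf{H}^n$. In the Kleinian case $\PGL(2,\mathbf{C})=\PSL(2,\mathbf{C})$ acts by orientation-preserving isometries of $\mathbf{H}^3$. In the Fuchsian case, $\nrd(y)$ may be negative at the unramified real place. Then $\psi(y)$ acts on $\mathbf{H}^2$ by an orientation-reversing isometry, for instance $z\mapsto \bar z$ composed with a M\"obius map. It is still an isometry. Consequently $g$ induces an isometry $X_\Ocal(\afrak)\to X_{\Ocal'}(\afrak)$, possibly orientation-reversing. Since the spectral genus is defined up to isometry, with no orientation condition on the isometry, this is enough.

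\emph{Step 3.} In the Fuchsian case, Theorem \ref{thm:A} says that any closed orientable Riemannian manifold isospectral to $X_\Ocal(\afrak)$ is isometric to some $X_{\Ocal'}(\afrak)$. By Step 2 it is therefore isometric to $X_\Ocal(\afrak)$, so the spectral genus is a single point.

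In the Kleinian case, let $M$ be a closed orientable hyperbolic $3$--manifold with the same complex length spectrum as $X$. By \cite{Salvai-[LaplaceComplexLength]2002,Kelmer-[RefinementStrongMultiplicity]2014}, $M$ is isospectral to $X$, and $\CL(M)=\CL(X)$. The last assertion of Theorem \ref{thm:B} then gives that $M$ is isometric to some $X_{\Ocal'}(\afrak)$, and Step 2 gives that this is isometric to $X$.

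The only point needing any care is Step 2: the sign of the norm at the real place in the Fuchsian case, where the conjugating element may reverse orientation. As explained there, this causes no difficulty because isometry classes allow orientation-reversing isometries.
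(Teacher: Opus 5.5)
Your proposal is correct and follows essentially the same route as the paper: Corollary~\ref{cor:C} is read off from Theorems~\ref{thm:A} and~\ref{thm:B}, using that for $t(B)=1$ every $\Ocal'$ is a $B^\times$--conjugate of $\Ocal$, so that $\Gamma_{\Ocal'}(\afrak)$ is a $\GL(2,\kinf)$--conjugate of $\Gamma_\Ocal(\afrak)$ (Remark~\ref{rem:type-number}). Your handling of a possibly orientation-reversing conjugator in the Fuchsian case matches the paper's remark at the start of \S\ref{sec:arithmetic-groups} that $\Lambda\backslash\mathbf{H}^n$ and $g\Lambda g^{-1}\backslash\mathbf{H}^n$ are isometric for every $g\in\GL(2,\kinf)$.
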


Corollary \ref{cor:C} applies to the following examples (\S\ref{sec:applications}).

\begin{maincorollary}\label{cor:hurwitz}
Every principal congruence cover $X(\afrak)$ of the $(2,3,7)$ triangle orbifold, where $\afrak \neq \mathbf{Z}[\eta]$ is a nonzero ideal of $\mathbf{Z}[\eta]$ and $\eta = 2\cos(2\pi/7)$, is a spectrally rigid Hurwitz surface. In particular Klein's quartic, the Fricke--Macbeath surface, and the surfaces of the first Hurwitz triplet are spectrally rigid.
\end{maincorollary}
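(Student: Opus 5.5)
The plan is to derive the statement from Corollary \ref{cor:C}. For this we must verify, for the $(2,3,7)$ setting, that (H1)--(H3) hold and that $t(B)=1$. We must also identify $X(\afrak)$ with $X_\Ocal(\afrak)$.

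Let $k=\mathbf{Q}(\eta)$, the totally real cubic field with $R=\mathbf{Z}[\eta]$. Let $B$ be the quaternion algebra over $k$ ramified at exactly two of the three real places and at no finite prime. It has even ramification, so $\disc(B)=R$ and $B$ is a division algebra, giving (H1). By Takeuchi's classification, the $(2,3,7)$ triangle group $\Delta$ is the arithmetic group $\Gamma_\Ocal^{+}$, the image in $\PSL(2,\mathbf{R})$ of the normalizer elements of positive norm, for a maximal order $\Ocal$ of $B$. The class number of $k$ is one. Every totally positive unit of $R$ is a square, so the narrow class number is also one. Hence $\Delta$ coincides with the image $\Gamma_\Ocal(R)$ of $\Ocal^1$: the quotient $\Gamma^+_\Ocal/\Gamma_\Ocal$ is a $2$-group controlled by $R^{\times}_{+}/R^{\times 2}$ and the narrow class group, and both are trivial. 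Thus the principal congruence cover $X(\afrak)$ is, by definition, $X_\Ocal(\afrak)=\Gamma_\Ocal(\afrak)\backslash\mathbf{H}^2$.

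For the type number, $t(B)$ is the order of a quotient of the narrow-type class group $k^\times_{+}\backslash \widehat{k}^\times/\widehat{R}^\times\widehat{k}^{\times2}$. The relevant positivity is taken at the places where $B$ splits. This group is a quotient of the narrow class group, so $t(B)=1$.

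Next we check (H2). No prime ramifies in $B$, so only the $2$-adic condition remains. The rational prime $2$ is inert in $k$, since $x^3+x^2-2x-1$ is irreducible mod $2$. Hence $e(\pfrak\mid 2)=1$ for $\pfrak=2R$, and (H2) holds for every $\afrak$.

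The main obstacle is (H3). The rational prime $7$ is totally ramified in $k$, and Lemma \ref{lemma:torsion-free} covers only ideals divisible by a prime over an unramified rational prime. So that lemma fails for $\afrak=\pfrak_7^m$, and we argue directly. The torsion of $\Delta$ consists of conjugates of the elliptic generators of orders $2$, $3$ and $7$. Torsion in $\Gamma_\Ocal(\afrak)$ would be an element $x\in\Ocal^1$ with $x\equiv1 \bmod \afrak\Ocal$ and $\tr(x)\in\{0,\pm1,\eta\text{-type values } \pm(\zeta_{14}^j+\zeta_{14}^{-j})\}$, with $x\ne\pm1$. Write $x=1+y$ with $y\in\afrak\Ocal$. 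Then $\tr(x)-2=\tr(y)\in\afrak$, and also $\nrd(x)=1$ gives $\tr(y)=-\nrd(y)\in\afrak^2$. So $2-\tr(x)$ lies in $\afrak^2$. For order $2$ this value is $2$, and for order $3$ it is $3$ or $1$. For order $7$ it is $2-2\cos(2\pi j/7)$, which generates $\pfrak_7$ with $\pfrak_7^3=7R$. Any nontrivial ideal whose square divides $(2)$, $(3)$, $(1)$, or $\pfrak_7$ would have to be the unit ideal, since $\pfrak_7$ is not a square, $2$ is inert, and $3$ is inert. Hence $\Gamma_\Ocal(\afrak)$ is torsion free for all $\afrak\ne R$. (The same computation with $\tr(x)=-2\cos(\cdot)$ handles the signs in $\PSL$, using $-x$ if needed.)

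Corollary \ref{cor:C} then gives spectral rigidity. Moreover, $\Gamma_\Ocal(\afrak)$ is torsion free and normal in $\Delta$, so it is a surface group and $X(\afrak)$ is a Hurwitz surface. Finally, we identify the named examples. Klein's quartic is $X(\pfrak_7)$, the Fricke--Macbeath surface is $X(\pfrak_2)=X(2R)$, and the first Hurwitz triplet consists of $X(\pfrak_{13})$, $X(\pfrak_{13}')$ and $X(\pfrak_{13}'')$ for the three primes over $13$. These identifications come from the known congruence descriptions, with $\PSL(2,\mathbf{F}_7)$, $\PSL(2,\mathbf{F}_8)$ and $\PSL(2,\mathbf{F}_{13})$ as the respective quotients.
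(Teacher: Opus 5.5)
Your proposal is correct and has the same skeleton as the paper's proof. You verify (H1)--(H3) and $t(B)=1$, apply Corollary~\ref{cor:C}, and recognize the named surfaces from the quotients $\PSL(2,R/\pfrak)$, which the paper obtains from strong approximation.

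Two sub-steps go differently.

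\textbf{Identifying $\Delta(2,3,7)$ with $\Gamma_\Ocal$.} You cite Takeuchi for $\Delta(2,3,7)=\Gamma_\Ocal^+$ and then show $\Gamma_\Ocal^+=\Gamma_\Ocal$. The paper instead places Elkies' elements $g_2,g_3,g_7$ in $\Ocal^1$ and uses that $\Delta(2,3,7)$ has the smallest covolume of any Fuchsian group, which avoids the normalizer computation. If you keep your route, justify that totally positive units are squares, e.g.\ via the sign vectors of $-1,\eta,1+\eta$.

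\textbf{Checking (H3).} You misread Lemma~\ref{lemma:torsion-free}: its hypothesis is $e(\pfrak\mid\ell)<c(\ell)$, not that $\ell$ be unramified. Since $e(\pfrak_7\mid 7)=3<6=c(7)$, the lemma covers every prime of $\mathbf{Z}[\eta]$, including the levels $\pfrak_7^m$; this is exactly how the paper concludes. Your direct check is nevertheless valid: you list the traces of torsion in $\Delta(2,3,7)$ and note that $2-\tr x$ generates a squarefree ideal. It does, however, need $\Gamma_\Ocal=\Delta(2,3,7)$ first, while the lemma is uniform. Within that check, ``using $-x$ if needed'' is not legitimate, because $-x$ need not lie in $\Ocal^1(\afrak)$. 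It is also unnecessary: if $\tr x=-(\zeta+\zeta^{-1})$ with $\zeta$ a primitive $7$th root of unity, then $2-\tr x=\zeta^{-1}(1+\zeta)^2$ is a unit.

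\textbf{A minor slip.} The positivity in the type number formula is taken at the ramified real places $\Ram_\infty(B)$ (Proposition~\ref{prop:type-number}), not the split ones. This is harmless here because the narrow class group is trivial.
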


\begin{maincorollary}\label{cor:shimura}
Let $B$ be an indefinite quaternion division algebra over $\mathbf{Q}$ of discriminant $D$, and let $\Ocal$ be a maximal order of $B$. For every integer $N>1$ prime to $D$, the Shimura curve $X_\Ocal(N)$ is spectrally rigid.
\end{maincorollary}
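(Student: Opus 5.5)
This corollary follows from Corollary \ref{cor:C} once we check its hypotheses. Take $k=\mathbf{Q}$, $R=\mathbf{Z}$, $\afrak=N\mathbf{Z}$, and let $B$ be indefinite of discriminant $D$ and a division algebra, so $D>1$. We verify (H1)--(H3) and $t(B)=1$ in turn.

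\textbf{Hypothesis (H1).} Since $\mathbf{Q}$ has a single real place and $B$ is indefinite, $B$ is unramified at that place. Thus $B$ is of Fuchsian type and (H1) holds.

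\textbf{Hypothesis (H2).} The prime factors of $N\mathbf{Z}$ are the primes $p\mid N$. Since $\gcd(N,D)=1$, none of them ramifies in $B$. At $p=2$ we have $e(2\mid 2)=1$, so the extra condition at primes over $2$ holds automatically. As noted after the hypotheses, when $2$ is unramified in $k$, (H2) only asks that $\afrak$ be prime to $\disc(B)$, which is our assumption.

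\textbf{Hypothesis (H3).} Every rational prime is unramified in $\mathbf{Q}$, and $N>1$ has some prime factor $p$. Lemma \ref{lemma:torsion-free} then gives that $\Gamma_\Ocal(N)$ is torsion free. If one wants a direct check: a torsion element $\gamma\ne\pm1$ of $\Ocal^1$ has $\tr\gamma\in\{0,\pm1\}$. If $\gamma\equiv1 \bmod N\Ocal$, then $\tr\gamma\equiv 2 \bmod N$. For $N>2$ this forces $N\mid 1,2,$ or $3$ appropriately, and a little care shows the only possible cases lead to $\gamma=\pm 1$ in $\PSL$. The lemma covers this uniformly, so we simply cite it.

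\textbf{Type number.} This is the only step needing outside input. The type number divides the class number of the narrow class group modulo the relevant squares. More precisely, $t(B)$ is the order of a quotient of the narrow class group of $k$ restricted to places unramified at infinity, via Eichler's norm theorem. Because $B$ is indefinite, strong approximation applies (Eichler condition). It gives $t(B)$ as the order of $k^\times_{B}\backslash \hat{k}^\times/ \hat{R}^\times \hat{k}^{\times2}\cdots$, a quotient of $\Cl^+(\mathbf{Z})$. Since $\Cl^+(\mathbf{Z})$ is trivial, $t(B)=1$. Equivalently, every indefinite quaternion algebra over $\mathbf{Q}$ has a unique maximal order up to conjugacy.

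\textbf{Conclusion.} Corollary \ref{cor:C} now gives that $X_\Ocal(N)$ is spectrally rigid. The main obstacle, if any, is justifying $t(B)=1$ rigorously. I would cite Eichler's theorem, as in Vign\'eras's book, rather than reprove it.
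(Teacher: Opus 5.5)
Your proposal is correct and is the paper's proof: $B$ is of Fuchsian type, (H2) reduces to $\gcd(N,D)=1$ because $2$ is unramified in $\mathbf{Q}$, (H3) follows from Lemma \ref{lemma:torsion-free}, $t(B)=1$ because the narrow class group of $\mathbf{Q}$ is trivial, and then Corollary \ref{cor:C} applies. Two small points: the type-number step needs no outside citation, since Corollary \ref{cor:typenumberone} already gives it; and your optional direct torsion check should use $\tr\gamma-2\in N^2\mathbf{Z}$ from Lemma \ref{lemma:congruence-traces}, because a congruence modulo $N$ alone would not exclude $N=2,3$.
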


\subsubsection*{Acknowledgements and AI Disclosure:} This paper is based on the PhD thesis of the first author. We would like to thank Michelle Chu, Ben Linowitz, Alan Reid, and Matthew Stover. The second author used Opus 5.5 during the post-production stage to clean up the draft and for references. The authors are responsible for all of the content of this paper.

\section{Quaternion algebras and orders}\label{sec:quaternion}

We collect the facts about quaternion algebras that we need and refer the reader to \cite{Maclachlan.Reid-[ArithmeticHyperbolic3Manifolds]2003} and \cite{Voight-[QuaternionAlgebras]2021}.

\subsection{Quaternion algebras}

A \textbf{quaternion algebra} over a field $k$ of characteristic zero is a central simple $k$--algebra $B$ of dimension $4$. It carries a canonical involution $x\mapsto x^*$, and the \textbf{reduced trace} and \textbf{reduced norm} are
\begin{align}\label{eq:reduced-trace-norm}
\trd(x) = x + x^*, \qquad \nrd(x) = x x^*, \qquad x\in B.
\end{align}
Every $x\in B$ satisfies $x^2 - \trd(x) x + \nrd(x) = 0$. The reduced trace is $k$--linear, the reduced norm is multiplicative and is a quadratic form on $B$ whose associated bilinear form is $(x,y)\mapsto \trd(xy^*)$, and the bilinear form $(x,y)\mapsto \trd(xy)$ is nondegenerate. For every $x \in B$ one has
\begin{align}\label{eq:norm-of-sum}
\nrd(1+x) = 1 + \trd(x) + \nrd(x).
\end{align}
The algebra $B$ is a division algebra if and only if $\nrd$ is anisotropic. Otherwise $B\cong \M(2,k)$, and then $\trd$ and $\nrd$ become the trace and the determinant. We write $B^1$ for the group of elements of reduced norm $1$.

\subsection{Local fields}

Let $F$ be a local field of characteristic zero. If $F=\mathbf{C}$, all quaternion algebras over $F$ are isomorphic to $\M(2,\mathbf{C})$. If $F$ is $\mathbf{R}$ or a finite extension of $\mathbf{Q}_p$, there is, up to isomorphism, a unique quaternion division algebra $D_F$ over $F$ \cite[\S 2.6]{Maclachlan.Reid-[ArithmeticHyperbolic3Manifolds]2003}. For $F = \mathbf{R}$ it is Hamilton's quaternions $\mathcal{H}$, and $\mathcal{H}^1 \cong \mathrm{SU}(2)$ is compact. A quaternion algebra $B$ over $F$ is \textbf{ramified} if $B\cong D_F$ and \textbf{unramified} if $B\cong \M(2,F)$. Let $F$ be nonarchimedean with ring of integers $R$. An $R$--\textbf{order} of a quaternion algebra $B$ over $F$ is a subring $\Ocal$ containing $1$ that is a finitely generated $R$--module and spans $B$ over $F$. Every $x \in \Ocal$ is integral over $R$, and so has reduced trace and reduced norm in $R$. An order is \textbf{maximal} if it is not properly contained in another order.

\begin{proposition}[{\cite[\S 6.4, \S 6.5]{Maclachlan.Reid-[ArithmeticHyperbolic3Manifolds]2003}}]\label{prop:local-orders}
Let $F$ be a nonarchimedean local field with ring of integers $R$.
\begin{enumerate}
\item If $B$ is ramified, the set of elements of $B$ that are integral over $R$ is the unique maximal order of $B$.
\item Every maximal order of $\M(2,F)$ is conjugate under $\GL(2,F)$ to $\M(2,R)$. 
\end{enumerate}
In particular any two maximal orders of $B$ are conjugate under $B^\times$.
\end{proposition}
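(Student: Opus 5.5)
The plan is to prove the two parts separately and then deduce the final assertion from them.

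For (1), let $D=D_F$ be ramified and let $\Ocal_D$ be the set of elements of $D$ integral over $R$. The first step is to show that $x\in D$ is integral if and only if $\nrd(x)\in R$. If $x$ is integral then $\nrd(x)\in R$, as already noted for orders. Conversely, if $\nrd(x)\in R$, the subfield $F(x)$ has degree at most $2$ and $\nrd$ restricts to its field norm. So $x$ lies in the valuation ring of $F(x)$, and hence $\trd(x)\in R$ as well. Write $w=v_F\circ\nrd$. Since $\nrd$ is anisotropic, $w$ is a valuation on $D$ extending $2v_F$: it is multiplicative, and the ultrametric inequality $w(x+y)\geq\min(w(x),w(y))$ follows by working inside the commutative field $F(x^{-1}y)$. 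Hence $\Ocal_D=\{x: w(x)\geq 0\}$ is closed under addition and multiplication and is a ring. It is finitely generated over $R$ because it is a bounded open subset of the finite-dimensional $F$-space $D$; boundedness comes from the fact that $\nrd$ is an anisotropic form, so it is bounded below on the unit sphere of any norm. It spans $D$, since $\pi^n x\in\Ocal_D$ for large $n$. Finally, every order consists of integral elements, so every order is contained in $\Ocal_D$. Thus $\Ocal_D$ is the unique maximal order.

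For (2), let $\Ocal\subseteq\M(2,F)$ be an order and set $L=\Ocal R^2\subseteq F^2$. This $L$ is a finitely generated $R$-submodule, and it spans $F^2$, because $\Ocal$ spans $\M(2,F)$. Since $R$ is a PID and $L$ is torsion free, $L$ is a lattice, so $L=gR^2$ for some $g\in\GL(2,F)$. Because $\Ocal L\subseteq L$, we get
\begin{align*}
\Ocal\subseteq \operatorname{End}_R(L)=g\M(2,R)g^{-1}.
\end{align*}
The order $\M(2,R)$ is itself maximal. To see this, use the nondegenerate trace form: if $\Ocal'\supseteq\M(2,R)$ is an order, then $\trd(xy)\in R$ for all $x\in\Ocal'$ and $y\in\M(2,R)$. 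Testing against the matrix units forces the entries of $x$ to lie in $R$. Conjugates of $\M(2,R)$ are therefore maximal as well. So every maximal order equals some $g\M(2,R)g^{-1}$.

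The final assertion is now immediate. In the ramified case it follows from the uniqueness in (1). In the unramified case it follows from (2), since $B^\times=\GL(2,F)$. The step I expect to need the most care is the ultrametric inequality for $w$ in (1). I would prove it as follows: for $y\neq 0$ write $x+y=y(1+y^{-1}x)$. Then it suffices to show that $w(z)\geq 0$ implies $w(1+z)\geq 0$, which holds because $z$ integral in the field $F(z)$ implies $1+z$ is integral.
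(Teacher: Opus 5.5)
Your proof is correct. The paper gives no proof of its own and only cites Maclachlan--Reid \S\S 6.4--6.5, and your argument is essentially the standard one given there: the norm valuation $v_F\circ\nrd$ on the division algebra for (1), and the inclusion $\Ocal\subseteq\operatorname{End}_R(\Ocal R^2)$ together with the maximality of $\M(2,R)$ for (2).
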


\subsection{Number fields}

Let $k$ be a number field with ring of integers $R$. For a place $\nu$ of $k$ we write $k_\nu$ for the completion, and for a finite prime $\pfrak$ we write $R_\pfrak$ for the completion of $R$ at $\pfrak$. If $B$ is a quaternion algebra over $k$ we put $B_\nu = B\otimes_k k_\nu$ and write $\Ram(B)$ for the set of places $\nu$ at which $B_\nu$ is ramified, $\Ram_\infty(B)$ and $\Ram_f(B)$ for its archimedean and finite parts. The set $\Ram(B)$ is finite of even cardinality, contains no complex place, and determines $B$ up to isomorphism. Conversely every finite set of noncomplex places of even cardinality arises in this way \cite[\S 7.3]{Maclachlan.Reid-[ArithmeticHyperbolic3Manifolds]2003}. The algebra $B$ is a division algebra if and only if $\Ram(B)\neq \emptyset$, and the \textbf{discriminant} $\disc(B)$ is the product of the primes in $\Ram_f(B)$. An \textbf{order} of $B$ is a subring containing $1$ that is a finitely generated $R$--module spanning $B$ over $k$, and it is \textbf{maximal} if it is maximal under inclusion. Every order is contained in a maximal order. If $\Ocal$ is an order then $\trd(\Ocal) \subseteq R$ and $\nrd(\Ocal)\subseteq R$, and $\Ocal^* = \Ocal$ since $x^* = \trd(x) - x$. We shall use the local--global principle for lattices repeatedly. An $R$--lattice in $B$ is a finitely generated $R$--submodule $L$ that spans $B$. For a finite prime $\pfrak$ we write $L_\pfrak = L\otimes_R R_\pfrak \subseteq B_\pfrak$.

\begin{proposition}[{\cite[\S 9.4]{Voight-[QuaternionAlgebras]2021}}]\label{prop:local-global-lattices}
Let $L$ be an $R$--lattice in $B$.
\begin{enumerate}
\item $L = \{x \in B : x \in L_\pfrak \text{ for all primes } \pfrak\}$.
\item Let $S$ be a finite set of primes and, for each $\pfrak\in S$, let $M(\pfrak)\subseteq B_\pfrak$ be an $R_\pfrak$--lattice. Then there is a unique $R$--lattice $L'$ in $B$ with $L'_\pfrak = M(\pfrak)$ for $\pfrak\in S$ and $L'_\pfrak = L_\pfrak$ for $\pfrak\notin S$.
\end{enumerate}
Moreover $L$ is a (maximal) order if and only if $L_\pfrak$ is a (maximal) order for every $\pfrak$.
\end{proposition}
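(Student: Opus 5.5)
The plan is to reduce everything to the localizations $R_{(\pfrak)}$ and then pass to completions. The two standard facts used are that $L_\pfrak \cap B = L_{(\pfrak)} := L\otimes_R R_{(\pfrak)}$ inside $B_\pfrak$, and that $R$ is a Dedekind domain, so an ideal contained in no prime is $R$ itself.

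\emph{Part 1.} The inclusion $L\subseteq \bigcap_\pfrak (L_\pfrak\cap B)$ is clear. For the converse, take $x\in B$ with $x\in L_\pfrak$ for all $\pfrak$, and consider the ideal $I=\{r\in R : rx\in L\}$. Since $L$ spans $B$, we have $I\neq 0$.
\begin{itemize}
\item For each $\pfrak$, $x\in L_\pfrak\cap B=L_{(\pfrak)}$, so $sx\in L$ for some $s\in R\setminus\pfrak$.
\item Hence $I\not\subseteq\pfrak$ for every prime $\pfrak$, so $I=R$ and $x\in L$.
\end{itemize}
The one point to justify is $L_\pfrak\cap B=L_{(\pfrak)}$. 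Choose a $k$-basis $e_i$ of $B$ with $dL_0\subseteq L\subseteq d^{-1}L_0$, where $L_0=\bigoplus Re_i$. This reduces the claim to $R_\pfrak\cap k=R_{(\pfrak)}$, which is immediate from valuations.

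\emph{Part 2.} Define
\[ L'=\{x\in B : x\in M(\pfrak)\ \text{for } \pfrak\in S,\ x\in L_\pfrak \ \text{for } \pfrak\notin S\}. \]
Since $S$ is finite and each $M(\pfrak)$ is a lattice, there is a nonzero $r\in R$ with $rL_\pfrak\subseteq M(\pfrak)\subseteq r^{-1}L_\pfrak$ for all $\pfrak\in S$. By Part 1 this gives $rL\subseteq L'\subseteq r^{-1}L$. Because $R$ is Noetherian, $L'$ is then a finitely generated spanning $R$-module, i.e.\ a lattice.

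The main obstacle is proving $L'_\pfrak=M(\pfrak)$ for $\pfrak\in S$; the case $\pfrak\notin S$ is the same argument. The inclusion $\subseteq$ is by definition. For $\supseteq$, take $m\in M(\pfrak)$ and let $N$ be large, so that $\pfrak^N r^{-1}L_\pfrak\subseteq rL_\pfrak$. Since $r^{-1}L$ is dense in $r^{-1}L_\pfrak$, the Chinese remainder theorem applied to the finitely many primes $\pfrak$ and $\mathfrak q\in S\cup\{\mathfrak q\mid r\}$, with respect to the free module $r^{-1}L/rL$ of finite length, produces $y\in r^{-1}L$ with two properties:
\begin{itemize}
\item $y\equiv m \bmod rL_\pfrak$;
\item $y\in rL_{\mathfrak q}$ for the other primes $\mathfrak q$ in this finite set.
\end{itemize}
Then $y\in L'$ and $m-y\in rL_\pfrak\subseteq rL'_\pfrak\subseteq L'_\pfrak$, so $m\in L'_\pfrak$. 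Uniqueness of $L'$ follows immediately from Part 1, since a lattice is recovered from its completions.

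\emph{Orders.} Suppose each $L_\pfrak$ is an order. Then $1\in L$ and $L\cdot L\subseteq L$ can be checked elementwise via Part 1, because $xy\in L_\pfrak L_\pfrak\subseteq L_\pfrak$ for all $\pfrak$. Conversely, the completion of an order is an order. For maximality:
\begin{itemize}
\item If every $L_\pfrak$ is maximal and $L\subseteq L''$ with $L''$ an order, then $L_\pfrak\subseteq L''_\pfrak$ forces $L_\pfrak=L''_\pfrak$ for all $\pfrak$, hence $L=L''$ by Part 1.
\item If instead some $L_\pfrak$ is not maximal, pick a strictly larger order $M(\pfrak)\supsetneq L_\pfrak$. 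Part 2 with $S=\{\pfrak\}$ then yields a lattice $L''\supsetneq L$ which is an order by the previous paragraph, so $L$ is not maximal.
\end{itemize}
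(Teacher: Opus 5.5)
Your argument is correct and is essentially the standard proof in Voight, which the paper cites instead of giving a proof: the denominator ideal $\{r\in R : rx\in L\}$ for Part 1, the sandwich $rL\subseteq L'\subseteq r^{-1}L$ together with the Chinese remainder theorem for Part 2, and Parts 1 and 2 for the statements about (maximal) orders. Two small repairs are needed: in Part 2, use $rL_\pfrak=(rL)_\pfrak\subseteq L'_\pfrak$, which follows from $rL\subseteq L'$, because your intermediate step $rL_\pfrak\subseteq rL'_\pfrak$ presupposes $L_\pfrak\subseteq L'_\pfrak$, and that fails when $M(\pfrak)\subsetneq L_\pfrak$; and obtain $L_\pfrak\cap B=L_{(\pfrak)}$ by taking an $R_{(\pfrak)}$--basis of the free module $L_{(\pfrak)}$ (a $k$--basis of $B$) and arguing coordinatewise, since the sandwich $dL_0\subseteq L\subseteq d^{-1}L_0$ alone does not reduce the claim to $R_\pfrak\cap k=R_{(\pfrak)}$.
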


For an ideal $\afrak = \prod \pfrak^{n_\pfrak}$ of $R$ and an order $\Ocal$, the lattice $\afrak\Ocal$ satisfies $(\afrak\Ocal)_\pfrak = \pfrak^{n_\pfrak}\Ocal_\pfrak$. Hence, by Proposition \ref{prop:local-global-lattices},
\begin{align}\label{eq:aO-local}
\afrak\Ocal = \{x \in \Ocal : x \in \pfrak^{n_\pfrak}\Ocal_\pfrak \text{ for every } \pfrak \mid \afrak\}.
\end{align}

\subsection{Type numbers}\label{sec:type-number}

The group $B^\times$ acts on the set of maximal orders of $B$ by conjugation. The number $t(B)$ of orbits is finite and is called the \textbf{type number} of $B$. By Proposition \ref{prop:local-orders}, all maximal orders are locally conjugate, so $t(B)$ measures the failure of the local--global principle for conjugacy of maximal orders, much as the class number measures the failure of the local--global principle for principality of ideals. We say that $B$ satisfies the \textbf{Eichler condition} if it is unramified at some archimedean place. Quaternion algebras of Fuchsian or Kleinian type satisfy it. Let $P_B$ be the group of principal fractional ideals $xR$ with $x\in k^\times$ positive at every real place in $\Ram_\infty(B)$, and let $\Cl_B = I_k / P_B$, where $I_k$ is the group of fractional ideals. This is a quotient of the narrow class group of $k$.

\begin{proposition}[{\cite[Theorem 6.7.6]{Maclachlan.Reid-[ArithmeticHyperbolic3Manifolds]2003}}]\label{prop:type-number}
If $B$ satisfies the Eichler condition, then $t(B)$ is the order of the quotient of $\Cl_B$ by the subgroup generated by the squares and by the classes of the primes in $\Ram_f(B)$.
\end{proposition}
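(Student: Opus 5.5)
This is the standard type number formula, and the plan is to deduce it from the strong approximation theorem in the form of Eichler's norm theorem. The set of maximal orders will be parametrized adelically. Fix a maximal order $\Ocal$. By Propositions \ref{prop:local-orders} and \ref{prop:local-global-lattices}, every maximal order $\Ocal'$ agrees with $\Ocal$ at almost all primes. At the remaining primes it is conjugate to $\Ocal_\pfrak$ by some $x_\pfrak\in B_\pfrak^\times$. Hence the maximal orders correspond bijectively to $\widehat B^\times/N(\widehat\Ocal)$, where $N(\widehat\Ocal)=\prod'_\pfrak N(\Ocal_\pfrak)$ is the adelic normalizer. Global conjugacy classes are then the double cosets $B^\times\backslash \widehat B^\times/N(\widehat\Ocal)$.

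Next I would apply the reduced norm, mapping the double coset space to $k^\times\backslash \widehat k^\times$ modulo $\nrd(B^\times)\cdot\nrd(N(\widehat\Ocal))$. Several inputs identify this quotient:
\begin{enumerate}
\item By the Hasse--Schilling norm theorem, $\nrd(B^\times)$ consists of the elements of $k^\times$ positive at the real places in $\Ram_\infty(B)$.
\item Locally, at an unramified prime, $N(\Ocal_\pfrak)=F^\times\GL(2,R_\pfrak)$ after conjugating to $\M(2,R_\pfrak)$, so its norms are $F^{\times2}R_\pfrak^\times$.
\item At a ramified prime, $N(\Ocal_\pfrak)=B_\pfrak^\times$, whose norm is all of $F^\times$.
\end{enumerate}
Passing from ideles to ideals by dividing out the units $\prod R_\pfrak^\times$ identifies the quotient with $I_k$ modulo $P_B$, squares of ideals, and the classes of primes in $\Ram_f(B)$. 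This is precisely $\Cl_B$ modulo squares and the ramified primes, so the norm map lands in the stated group and is surjective.

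The main obstacle is injectivity of this norm map. Here I would use the Eichler condition through strong approximation for $B^1$. Suppose $\nrd(x)=\nrd(\beta)\nrd(n)$ with $\beta\in B^\times$ and $n\in N(\widehat\Ocal)$. Then $y=\beta^{-1}xn^{-1}$ has reduced norm $1$ in every finite component, after adjusting $\beta$ using the Hasse--Schilling description.

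Strong approximation for $B^1$, which holds since $B$ is unramified at some infinite place, gives $B^1$ dense in $\widehat B^1$. The open subgroup $\widehat\Ocal^{1}\subseteq N(\widehat\Ocal)$ then yields $y\in B^1\cdot\widehat\Ocal^1$. Consequently $x$ lies in the same double coset as the identity times the normalizer, and applying the same argument to $x_1^{-1}x_2$-type comparisons gives injectivity.

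Some care is needed with the order of multiplication, since the double coset has $B^\times$ on the left and $N$ on the right. I would handle this by noting that $\widehat B^1$ is normal in $\widehat B^\times$, so that $B^\times\widehat B^1 N(\widehat\Ocal)$ cosets are determined by norms. Combining the bijection with the identification of the target gives $t(B)$ as claimed.
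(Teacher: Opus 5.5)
The paper does not prove this statement; it quotes Maclachlan--Reid, so there is no internal argument to compare with. Your proposal is the standard adelic proof, and it is correct. The identification of types with $B^\times\backslash\widehat B^\times/N(\widehat\Ocal)$ is right. So are the local norm computations $\nrd(N(\Ocal_\pfrak)) = k_\pfrak^{\times 2}R_\pfrak^\times$ at split primes and $k_\pfrak^\times$ at ramified ones, the use of Hasse--Schilling for $\nrd(B^\times)$, and the passage from finite ideles to $I_k$, which works because $\prod_\pfrak R_\pfrak^\times\subseteq\nrd(N(\widehat\Ocal))$. Surjectivity follows from surjectivity of the local reduced norms together with $\nrd(\Ocal_\pfrak^\times)=R_\pfrak^\times$.

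A classical alternative goes through the left ideal class set $B^\times\backslash\widehat B^\times/\widehat\Ocal^\times$. One first applies Eichler's theorem, which identifies that set with $\Cl_B$ via $\nrd$, and then divides by the two-sided ideals $N(\widehat\Ocal)/\widehat\Ocal^\times$. This is the same computation factored through an intermediate step; your direct route is shorter.

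The one step to tighten is injectivity on an arbitrary fiber. What you actually prove is the identity fiber: $\nrd^{-1}\bigl(\nrd(B^\times)\nrd(N(\widehat\Ocal))\bigr)=B^\times N(\widehat\Ocal)$. Normality of $\widehat B^1$ correctly shows that the fiber through $x$ is $B^\times\widehat B^1xN(\widehat\Ocal)$. These two facts alone do not give $B^\times\widehat B^1xN(\widehat\Ocal)=B^\times xN(\widehat\Ocal)$, and $x_1^{-1}x_2$ is not the right element to compare: $x_1^{-1}x_2\in B^\times N(\widehat\Ocal)$ would place $x_2$ in $x_1B^\times N(\widehat\Ocal)$, not in $B^\times x_1N(\widehat\Ocal)$.

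What you need is $\widehat B^1=B^1\cdot x\widehat\Ocal^1x^{-1}$. This follows from density of $B^1$, because $x\widehat\Ocal^1x^{-1}$ is also an open subgroup of $\widehat B^1$. Then $\widehat B^1xN(\widehat\Ocal)=B^1x\widehat\Ocal^1N(\widehat\Ocal)=B^1xN(\widehat\Ocal)$. Equivalently, rerun your identity-fiber argument using as base point the maximal order whose completion is $x_1\widehat\Ocal x_1^{-1}$, applied to the element $x_2x_1^{-1}$. With that adjustment the argument is complete.
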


\begin{corollary}\label{cor:typenumberone}
Let $B$ satisfy the Eichler condition. If $\Cl_B$ has odd order, in particular if the narrow class number of $k$ is odd, then $t(B) = 1$. This holds if $k$ has class number one and its units realize every sign pattern at the real places in $\Ram_\infty(B)$.
\end{corollary}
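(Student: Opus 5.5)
The plan is to derive Corollary \ref{cor:typenumberone} directly from Proposition \ref{prop:type-number}. Since $B$ satisfies the Eichler condition, $t(B)$ is the order of the quotient
\[
\Cl_B \big/ \langle \Cl_B^2,\ [\pfrak] : \pfrak\in\Ram_f(B)\rangle .
\]
This quotient is an elementary abelian $2$--group, because every element squares into the subgroup. If $|\Cl_B|$ is odd, squaring is an automorphism of the finite abelian group $\Cl_B$, so $\Cl_B^2=\Cl_B$. Hence the quotient is trivial and $t(B)=1$.

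For the parenthetical claim, I use the observation already recorded in the paper that $\Cl_B$ is a quotient of the narrow class group. Concretely, the narrow principal ideals, those generated by totally positive elements, lie in $P_B$. A quotient of a group of odd order has odd order, so odd narrow class number gives $|\Cl_B|$ odd, and the first statement applies.

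For the last sentence, I show that $\Cl_B$ is trivial under the given hypotheses. Since $k$ has class number one, every fractional ideal is principal, say $xR$ with $x\in k^\times$. By hypothesis there is a unit $u\in R^\times$ whose signs at the real places in $\Ram_\infty(B)$ agree with those of $x$. Then $u^{-1}x$ is positive at all those places, and $xR=u^{-1}xR\in P_B$. So $I_k=P_B$, $\Cl_B$ is trivial, and in particular it has odd order. No step is a real obstacle; the only point requiring care is noting that odd order forces $\Cl_B^2=\Cl_B$.
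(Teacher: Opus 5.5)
Your proposal is correct and follows essentially the same route as the paper: odd order forces $\Cl_B$ to be generated by its squares, so the quotient in Proposition \ref{prop:type-number} is trivial. As in the paper, $\Cl_B$ is a quotient of the narrow class group, and class number one together with unit sign patterns lets you choose a generator of each ideal that is positive at $\Ram_\infty(B)$, so $\Cl_B$ is trivial.
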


\begin{proof}
A finite abelian group of odd order is generated by its squares, which gives the first two assertions. If $h_k = 1$ and every sign pattern at $\Ram_\infty(B)$ is realized by a unit, every principal ideal has a generator positive at $\Ram_\infty(B)$, so $\Cl_B$ is trivial.
\end{proof}

\subsection{Congruence subgroups}

Let $\Ocal$ be an order of $B$ and $\afrak$ a nonzero ideal of $R$. We put $\Ocal^1 = \Ocal\cap B^1$ and $\Ocal^1(\afrak) = \Ocal^1 \cap (1 + \afrak\Ocal)$, the \textbf{principal congruence subgroup} of level $\afrak$. It is the kernel of the reduction map $\Ocal^1 \to (\Ocal/\afrak\Ocal)^\times$, so it is normal in $\Ocal^1$. It is also normalized by every $\beta\in B^\times$ that normalizes $\Ocal$.

\begin{lemma}\label{lemma:congruence-traces}
If $\gamma\in \Ocal^1(\afrak)$, then $\trd(\gamma) - 2 \in \afrak^2$.
\end{lemma}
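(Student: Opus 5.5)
Write $\gamma = 1 + x$ with $x \in \afrak\Ocal$. The plan is to combine \eqref{eq:norm-of-sum} with $\nrd(\gamma) = 1$, which gives
\begin{align*}
1 = \nrd(1+x) = 1 + \trd(x) + \nrd(x), \qquad\text{so}\qquad \trd(x) = -\nrd(x).
\end{align*}
Since $\trd(\gamma) - 2 = \trd(1+x) - 2 = \trd(x)$, it suffices to show $\nrd(x) \in \afrak^2$. We then read off $\trd(\gamma)-2 = -\nrd(x)\in\afrak^2$.

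To show $\nrd(x)\in\afrak^2$, we work locally. By \eqref{eq:aO-local}, for each $\pfrak\mid\afrak$ with $n_\pfrak = v_\pfrak(\afrak)$ we have $x \in \pfrak^{n_\pfrak}\Ocal_\pfrak$. Choose a uniformizer $\pi$ of $R_\pfrak$ and write $x = \pi^{n_\pfrak} y$ with $y\in\Ocal_\pfrak$. Then $\nrd(x) = \pi^{2n_\pfrak}\nrd(y)$, and $\nrd(y)\in R_\pfrak$ since elements of an order are integral. Hence $v_\pfrak(\nrd(x)) \ge 2n_\pfrak$ for every $\pfrak\mid\afrak$. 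We also know $\nrd(x)\in R$ because $x\in\Ocal$. An element of $R$ whose valuation at each $\pfrak\mid\afrak$ is at least $2v_\pfrak(\afrak)$ lies in $\afrak^2$, which completes the argument.

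An alternative global phrasing avoids localization. Write $x = \sum a_i o_i$ with $a_i\in\afrak$ and $o_i\in\Ocal$. Then $\nrd(x) = \sum a_i^2\nrd(o_i) + \sum_{i<j} a_ia_j\trd(o_io_j^*)$, and each term lies in $\afrak^2 R$. Either route works.

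There is no real obstacle here. The only point needing care is that $\trd(x)\in\afrak$ alone gives just one factor of $\afrak$, and the identity $\trd(x) = -\nrd(x)$ is what upgrades this to $\afrak^2$.
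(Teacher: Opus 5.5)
Your proposal is correct, and your ``alternative global phrasing'' is exactly the paper's proof. The paper also uses the identity $\trd(x) = -\nrd(x)$ from \eqref{eq:norm-of-sum} and then expands $\nrd\bigl(\sum a_i x_i\bigr)$ as a quadratic form with coefficients in $\afrak^2$. Your primary local route is an equally valid way to check $\nrd(x)\in\afrak^2$: locally $\pfrak^{n_\pfrak}\Ocal_\pfrak = \pi^{n_\pfrak}\Ocal_\pfrak$ is principal, so no cross terms appear. It costs you an appeal to \eqref{eq:aO-local} and to the valuation criterion for membership in $\afrak^2$.
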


\begin{proof}
Write $\gamma = 1+x$ with $x = \sum_i a_i x_i$, where $a_i\in \afrak$ and $x_i\in\Ocal$. By \eqref{eq:norm-of-sum}, $1 = \nrd(\gamma) = 1 + \trd(x) + \nrd(x)$, so $\trd(\gamma) - 2 = \trd(x) = -\nrd(x)$. Expanding the quadratic form,
\begin{align*}
\nrd(x) = \sum_i a_i^2 \nrd(x_i) + \sum_{i<j} a_ia_j \trd(x_i x_j^*),
\end{align*}
and $\nrd(x_i)$ and $\trd(x_ix_j^*)$ lie in $R$ because $x_i x_j^* \in \Ocal$. Hence $\nrd(x)\in \afrak^2$.
\end{proof}

\begin{lemma}\label{lemma:torsion-free}
Put $c(\ell) = \ell-1$ for an odd prime $\ell$ and $c(2) = 2$. Suppose that $\afrak$ is divisible by a prime $\pfrak$ of residue characteristic $\ell$ with $e(\pfrak \mid \ell) < c(\ell)$. If $\gamma\in\Ocal^1(\afrak)$ and $\gamma^m = \pm 1$ for some $m\geq 1$, then $\gamma = \pm 1$. In particular the image of $\Ocal^1(\afrak)$ in $B^\times/k^\times$ is torsion free.
\end{lemma}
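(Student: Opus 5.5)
We reduce to a local statement at the prime $\pfrak$. Since $\gamma\in\Ocal^1(\afrak)\subseteq 1+\pfrak\Ocal_\pfrak$, it suffices to show that an element $\gamma\in 1+\pfrak\Ocal_\pfrak$ of reduced norm $1$ with $\gamma^m=\pm1$ is $\pm1$. The element $\gamma$ generates a commutative subalgebra $k_\pfrak[\gamma]\subseteq B_\pfrak$, and $\gamma^m=\pm1$ means $\gamma^{2m}=1$. If $\gamma\in k_\pfrak$, then $\gamma^2=\nrd(\gamma)=1$ and we are done. Otherwise $L=k_\pfrak(\gamma)$ is a quadratic étale algebra (a field or $k_\pfrak\times k_\pfrak$), and $\gamma$ is a root of unity in $L$ with $\gamma^*=\gamma^{-1}$ its Galois conjugate.

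The main step is to show that $\gamma=1+x$ with $x$ of positive valuation forces $\gamma$ to be trivial. Replace $\gamma$ by a suitable power so that we may assume $\gamma$ has prime order $q$ with $\gamma\neq 1$. If $q\neq\ell$, then $\gamma\equiv1$ modulo the maximal ideal of the integral closure $R_L$. This contradicts the fact that the reduction of the $q$-th roots of unity is injective when $q$ is prime to the residue characteristic.

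The case $q=\ell$ is the hard one. Here $\zeta=\gamma$ is a primitive $\ell$-th root of unity, so $k_\pfrak(\zeta)$ (or a factor of $L$) contains $\mathbf{Q}_\ell(\zeta_\ell)$, whose ramification index is $\ell-1$. Since $[L:k_\pfrak]\le 2$, the ramification index of $L$ over $\mathbf{Q}_\ell$ is at most $2e(\pfrak\mid\ell)$, which gives $\ell-1\le 2e$. This alone is not enough, so we use the congruence. We have $v_L(\zeta-1)=e_L/(\ell-1)$, normalized with $v_L(\ell)=e_L$. On the other hand, $\zeta-1=x\in\pfrak R_L$, so $v_L(\zeta-1)\ge v_L(\pfrak)=e_L/e$. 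Hence $e\ge \ell-1$, contradicting $e<\ell-1=c(\ell)$ for odd $\ell$.

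For $\ell=2$ we must handle $\gamma=-1$ and $\gamma$ of order $4$. Order $2$ elements are $\pm1$, and $-1\in 1+\pfrak\Ocal_\pfrak$ only if $2\in\pfrak$; this is allowed, and the claim permits $\gamma=\pm1$. For order $4$, we get $v_L(\zeta_4-1)=e_L/2$ against $\ge e_L/e$, which forces $e\ge2=c(2)$, a contradiction. In general we reduce to prime-power order by taking powers. We must ensure that the power still lies in $1+\pfrak\Ocal_\pfrak$, which holds because that set is a group, and we must track $\pm$ signs by working with $\gamma^{2m}=1$ and order $4$ when $\ell=2$. The final consequence follows since a torsion element of the image in $B^\times/k^\times$ gives $\gamma^m\in k^\times\cap B^1=\{\pm1\}$.

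I expect the valuation bookkeeping in the case $q=\ell$, with the $\pm1$ care at $\ell=2$, to be the main obstacle.
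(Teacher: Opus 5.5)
Your proof is correct, but it takes a different route from the paper's.

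The paper works globally. It passes to an eigenvalue $\zeta$ of $\gamma$ in $k(\zeta)$ and uses only the trace congruence $\trd(\gamma)-2\in\afrak^2$ of Lemma \ref{lemma:congruence-traces}, together with the identity $2-t=-\zeta^{-1}(1-\zeta)^2$. It then treats every order $r\ge 3$ at once: $1-\zeta$ is a $\mathfrak{P}$--unit unless $r=\ell^s$, and in that case $v_{\mathfrak P}(1-\zeta)=e(\mathfrak P\mid\ell)/\varphi(\ell^s)$. This gives $e(\pfrak\mid\ell)\ge\varphi(\ell^s)\ge c(\ell)$ with no case split.

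You instead work locally in the \'etale algebra $k_\pfrak[\gamma]$ and use the congruence $\gamma-1\in\pfrak R_L$ itself. This is legitimate, since $(\gamma-1)/\varpi$ lies in $L$ and is integral, being an element of $\Ocal_\pfrak$. You then reduce by taking powers to odd prime order, or to order $4$, so you only need the valuations of $1-\zeta_\ell$ and $1-\zeta_4$.

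Your inequality $v(\zeta-1)\ge v(\varpi)$ is exactly the paper's $v_{\mathfrak P}(2-t)\ge 2e(\mathfrak P\mid\pfrak)$, so neither approach gives a sharper bound. The paper's version shows that only $\trd\gamma\equiv 2 \bmod \pfrak^2$ matters, which fits its trace-based theme, and it avoids case distinctions. Yours avoids the trace lemma and the global splitting field.

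Two small places to tighten:
\begin{enumerate}
\item In the power reduction, a prime-order power of $\gamma$ may be $-1$. For $\ell$ odd this is already excluded by your $q\neq\ell$ argument, since $-1\not\equiv 1$ in odd residue characteristic. For $\ell=2$ you must stop at order $4$, as you do.
\item In the split case $L\cong k_\pfrak\times k_\pfrak$, the valuations should be read componentwise.
\end{enumerate}
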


\begin{proof}
Suppose that $\gamma^m = \pm 1$ and $\gamma\neq\pm 1$. Then $\gamma$ has finite order. Over a splitting field, $\gamma$ is semisimple with eigenvalues $\zeta,\zeta^{-1}$ for a root of unity $\zeta$, and $\zeta\neq \pm 1$ because $\gamma \neq \pm 1$ and $\gamma$ is semisimple. Let $r\geq 3$ be the order of $\zeta$, let $L = k(\zeta)$ and let $\mathfrak{P}$ be a prime of $L$ over $\pfrak$. Then $t = \trd(\gamma) = \zeta+\zeta^{-1}$ and
\begin{align*}
2 - t = -\zeta^{-1}(1-\zeta)^2 .
\end{align*}
By Lemma \ref{lemma:congruence-traces}, $v_{\mathfrak{P}}(2-t) \geq 2 e(\mathfrak{P}\mid\pfrak)$. If $r$ is not a prime power, or is a power of a prime different from $\ell$, then $1-\zeta$ is a unit at $\mathfrak{P}$, which is absurd. Thus $r = \ell^s$, in which case $v_{\mathfrak{P}}(1-\zeta) = e(\mathfrak{P}\mid \ell)/\varphi(\ell^s)$. We obtain $2e(\mathfrak{P}\mid\ell)/\varphi(\ell^s) \geq 2e(\mathfrak{P}\mid \pfrak)$, that is $e(\pfrak\mid\ell) \geq \varphi(\ell^s)$. Since $r\geq 3$, we have $\varphi(\ell^s)\geq c(\ell)$, contrary to the hypothesis.
\end{proof}

Finally we record a criterion for a group of units to generate an order.

\begin{lemma}\label{lemma:order-generated}
Let $\Delta \leq B^1$ be a subgroup that spans $B$ over $k$ and satisfies $\trd(\Delta)\subseteq R$. Then the $R$--span $R[\Delta]$ of $\Delta$ is an order of $B$. In particular $\Delta \leq \Ocal'^1$ for some maximal order $\Ocal'$.
\end{lemma}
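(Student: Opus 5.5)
The plan is to show first that $R[\Delta]$ is closed under multiplication and contains $1$, then that it is a finitely generated $R$--module, and finally that it spans $B$. Closure and the unit are immediate. Since $\Delta$ is a group, products of elements of $\Delta$ lie in $\Delta$, so the $R$--span of $\Delta$ is a subring containing $1\in\Delta$. It spans $B$ over $k$ by hypothesis. The only real content is finite generation.

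For finite generation I will use the nondegenerate trace form $(x,y)\mapsto\trd(xy)$. Since $\Delta$ spans $B$, choose $\delta_1,\dots,\delta_4\in\Delta$ forming a $k$--basis of $B$. Let $L=R\delta_1+\dots+R\delta_4$ and let
\[
L^\sharp=\{x\in B:\trd(x\delta_i)\in R \text{ for } i=1,\dots,4\}
\]
be its dual lattice with respect to this form. Nondegeneracy makes $L^\sharp$ a free $R$--module of rank $4$, namely the span of the dual basis, so in particular it is finitely generated. Now take any $\gamma\in\Delta$. Each product $\gamma\delta_i$ lies in $\Delta$, so $\trd(\gamma\delta_i)\in R$ by the hypothesis $\trd(\Delta)\subseteq R$, and hence $\gamma\in L^\sharp$. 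This gives $\Delta\subseteq L^\sharp$ and therefore $R[\Delta]\subseteq L^\sharp$. Since $R$ is Noetherian, a submodule of a finitely generated $R$--module is finitely generated. It follows that $R[\Delta]$ is an order.

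For the final assertion I invoke the fact recorded in \S\ref{sec:quaternion} that every order is contained in a maximal order $\Ocal'$. Then $\Delta\subseteq R[\Delta]\subseteq\Ocal'$. Since $\Delta\le B^1$, this gives $\Delta\le \Ocal'\cap B^1=\Ocal'^1$.

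I expect no serious obstacle. The one step needing care is the duality argument, which works only because the trace form in \eqref{eq:reduced-trace-norm} is nondegenerate and because $\Delta$ is closed under products. Without closure under products, trace integrality of the individual elements would not bound the $R$--span.
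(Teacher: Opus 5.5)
Your proof is correct and is essentially the paper's argument. The paper also chooses a $k$--basis $\gamma_1,\dots,\gamma_4$ from $\Delta$ and uses the Gram matrix $G=(\trd(\gamma_i\gamma_j))$ to place $R[\Delta]$ inside $d^{-1}\sum_i R\gamma_i$ with $d=\det G$, which is an explicit, slightly larger version of your dual lattice $L^\sharp$; the Noetherian step and the passage to a maximal order are the same.
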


\begin{proof}
The $R$--span is a subring containing $1$ that spans $B$, so we must show it is finitely generated. Choose $\gamma_1,\dots,\gamma_4\in\Delta$ forming a $k$--basis of $B$, and let $G = (\trd(\gamma_i\gamma_j))_{i,j}$. Then $d=\det G \in R$ and $d\neq 0$ since the trace form is nondegenerate. If $x = \sum_i c_i\gamma_i\in R[\Delta]$ with $c_i\in k$, then $x\gamma_j$ is an $R$--linear combination of elements of $\Delta$, so $\trd(x\gamma_j) \in R$ for all $j$. Solving $G (c_i)_i = (\trd(x\gamma_j))_j$ shows $c_i \in d^{-1}R$. Hence $R[\Delta]\subseteq d^{-1}\sum_i R\gamma_i$, which is finitely generated, and so is $R[\Delta]$ because $R$ is noetherian.
\end{proof}

\section{A local conjugation theorem}\label{sec:local}

Throughout this section $F$ is a nonarchimedean local field of characteristic zero with ring of integers $R$, maximal ideal $\pfrak$, uniformizer $\varpi$, and finite residue field $\mathbf{F} = R/\pfrak$ of characteristic $p$. We write $v$ for the normalized valuation, $x\mapsto \bar{x}$ for reduction modulo $\pfrak$, and $e = v(2)$, so that $e = 0$ if $p$ is odd and $e = e(\pfrak\mid 2)\geq 1$ if $p = 2$. For $m\geq 1$ let
\begin{align*}
\SL(2,R,\pfrak^m) = \{ g\in \SL(2,R) : g \equiv 1 \bmod \pfrak^m\}
\end{align*}
be the principal congruence subgroup of level $\pfrak^m$, and put $\pm\SL(2,R,\pfrak^m) = \{\pm 1\}\cdot\SL(2,R,\pfrak^m)$. We also use the convention $\pm\SL(2,R,\pfrak^0) = \SL(2,R)$. Note that $-1\in\SL(2,R,\pfrak^m)$ if and only if $m\leq e$. Let $U(\mathbf{F})$ be the group of upper triangular unipotent matrices in $\SL(2,\mathbf{F})$, and let $\slfrak(2,\mathbf{F})$ denote the space of trace zero matrices (which contains the scalars when $p = 2$). For a $2\times 2$ matrix $Y$ we write $Y^\sharp = \tr(Y) - Y$ for its adjugate, so that $YY^\sharp = \det(Y)$ and
\begin{align}\label{eq:det-sum}
\det(Y+Z) = \det Y + \det Z + \tr(Y^\sharp Z).
\end{align}
The proof of Lemma \ref{lemma:congruence-traces} applies verbatim to the order $\M(2,R)$ and shows that
\begin{align}\label{eq:local-congruence-trace}
\tr g - 2\epsilon \in \pfrak^{2m} \qquad \text{for all } g\in\epsilon\SL(2,R,\pfrak^m),\ \epsilon\in\{\pm1\}.
\end{align}

\begin{definition}
Let $n\geq 0$. A subgroup $H\leq\SL(2,R)$ satisfies the \textbf{trace condition} $(\mathrm{T}_n)$ if for every $h\in H$ there is a sign $\epsilon_h\in\{\pm 1\}$ with $\tr h - 2\epsilon_h \in \pfrak^{2n}$.
\end{definition}

Condition $(\mathrm{T}_n)$ is invariant under conjugation by $\GL(2,F)$, it passes to subgroups, and $H$ satisfies it if and only if $\{\pm 1\}H$ does. By \eqref{eq:local-congruence-trace}, every subgroup of $\pm\SL(2,R,\pfrak^n)$ satisfies $(\mathrm{T}_n)$. When $p$ is odd, the two numbers $\tr h \mp 2$ differ by the unit $4$, so $(\mathrm{T}_n)$ is equivalent to $\tr(h)^2 - 4 \in\pfrak^{2n}$ for all $h \in H$. The goal of this section is the following converse.

\begin{theorem}\label{prop:local}
Let $n\geq 0$ and assume that $e\leq 1$ or $n\leq e$. If $H \leq \SL(2,R)$ satisfies $(\mathrm{T}_n)$, then there exists $\alpha\in \GL(2,F)$ such that $\alpha H \alpha^{-1} \subseteq \pm\SL(2,R,\pfrak^n)$.
\end{theorem}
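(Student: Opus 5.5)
We prove Theorem \ref{prop:local} by induction on $n$; the case $n=0$ is trivial. The inductive step conjugates a group already in $\pm\SL(2,R,\pfrak^{n-1})$ into $\pm\SL(2,R,\pfrak^{n})$. We may replace $H$ by $\{\pm1\}H$. A preliminary point is that the conjugating element need only lie in $\GL(2,F)$, so $H$ need not fix a standard lattice. Since $(\mathrm{T}_n)$ implies $(\mathrm{T}_{n-1})$, the inductive hypothesis (the constraint $e\le1$ or $n-1\le e$ is inherited) conjugates $H$ into $\pm\SL(2,R,\pfrak^{n-1})$. Every $h\in H$ can then be written $h=\epsilon(1+\varpi^{n-1}X_h)$ with $X_h\in\M(2,R)$, and the content of the step is the set of reductions $\overline{X_h}\in\M(2,\mathbf{F})$.

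\textbf{Step 1 (trace condition on the reduction).} From $\det h=1$ and \eqref{eq:det-sum} we get $\varpi^{n-1}\tr X_h=-\varpi^{2n-2}\det X_h$. Combining this with $\tr h-2\epsilon\in\pfrak^{2n}$ gives $\tr X_h\in\pfrak^{n+1}$ and $\det X_h\in\pfrak^{2}$, at least for $n\ge2$ and $p$ odd. Hence $\overline{X_h}$ is nilpotent. Moreover $h\mapsto\overline{X_h}$ is additive on the level-$(n-1)$ group for $n\ge2$. So $V=\{\overline{X_h}\}$ is an $\mathbf{F}_p$-subspace of $\slfrak(2,\mathbf{F})$ consisting of nilpotent matrices. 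The same argument applied to products should show that $V$ is closed under $Y,Z\mapsto YZ+ZY$: expand $\tr(h_1h_2)$ to order $\varpi^{2n-2}$, which forces $\tr(\overline{X_{h_1}}\,\overline{X_{h_2}})=0$. A space of nilpotent $2\times 2$ matrices with pairwise vanishing trace products is contained in a line $\mathbf{F}\cdot N$, so up to $\SL(2,R)$-conjugation we have $V\subseteq \mathbf{F}\tbt{0}{1}{0}{0}$.

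\textbf{Step 2 (lattice shift).} Once every $\overline{X_h}$ is strictly upper triangular, conjugate by $\alpha=\mathrm{diag}(\varpi,1)$ (or its inverse). This divides the upper right entry by $\varpi$ and multiplies the lower left entry by $\varpi$. The result is $\equiv 1$ modulo $\pfrak^{n-1}$ with a nilpotent leading part of the opposite shape, so one shift alone does not raise the level. The correct move is the $\mathbf{F}$-version of Step 1 applied to the full matrix $X_h$. Using $\det X_h\in\pfrak^2$ and $\tr X_h\in\pfrak^{n+1}$, the lower left entry $c_h$ of $X_h$ satisfies $c_h\in\pfrak^{2}$ uniformly modulo a common conjugation. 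I expect to obtain this via the Bruhat--Tits tree: $H$ fixes the vertex $\M(2,R)$ and acts trivially on the ball of radius $n-1$. Condition $(\mathrm{T}_n)$ says that each element fixes pointwise a ball of radius $n$ around some vertex, with the translation length zero. A midpoint/center argument for the fixed-point sets, which are intersections of balls in a tree, then produces a vertex whose $n$-ball is fixed by all of $H$. This tree reformulation may in fact be the cleanest route for the whole induction: the condition $\tr h-2\epsilon\in\pfrak^{2n}$ is equivalent to $h$ fixing pointwise the $n$-neighbourhood of some vertex, and a group all of whose elements fix such neighbourhoods, being bounded, fixes a common one by a Helly-type property of subtrees.

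\textbf{Main obstacle.} The hardest step is the equivalence "$\tr h-2\epsilon\in\pfrak^{2n}$ iff $\pm h$ is conjugate into $\SL(2,R,\pfrak^n)$" for a \emph{single} element, together with the passage from elements to the whole group. For $p=2$ the sign $\epsilon$ and the failure of $\tr X_h$ to control $\det X_h$ interfere. This is exactly where the hypothesis $e\le1$ or $n\le e$ is needed, since otherwise the $\pm$ ambiguity, with $-1\equiv1$ modulo $\pfrak^{e}$, breaks the expansion in Step 1. I would first prove the single-element case by reducing $h$ to rational canonical form $\tbt{0}{-1}{1}{t}$ and computing its fixed set in the tree directly. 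After that I would handle the group case by the tree/Helly argument, and separately check the additivity and product identities of Step 1 when $p=2$, using $v(2)=e$.
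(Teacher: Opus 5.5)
\textbf{There is a genuine gap.} Your framework matches the paper as far as it goes: induction on the level, writing $h=\epsilon(1+\varpi^{m}X_h)$ with $m=n-1$, observing (for $p$ odd, $m\ge 1$) that $V=\{\overline{X_h}\}$ is an additive group of nilpotent matrices, and conjugating $V$ into the strictly upper triangular matrices. The step that actually raises the level is missing. After that conjugation you know $x_{11},x_{22},x_{21}\in\pfrak$ and $\det X_h\in\pfrak^2$, hence $x_{12}x_{21}\in\pfrak^2$. So \emph{each} $h$ has $x_{12}\in\pfrak$ or $x_{21}\in\pfrak^2$. What you need is that one alternative holds for all $h\in H$ simultaneously. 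Your sentence ``$c_h\in\pfrak^2$ uniformly modulo a common conjugation'' asserts exactly this, without an argument. The paper proves it in Lemma~\ref{lemma:key} from the group law. Suppose $h_1$ has a unit $(1,2)$ entry and $h_2$ has $(2,1)$ entry outside $\pfrak^2$. Then $X_{h_1h_2}=X_{h_1}+X_{h_2}+\varpi^mX_{h_1}X_{h_2}$ has a unit $(1,2)$ entry, a $(2,1)$ entry outside $\pfrak^2$, and diagonal entries in $\pfrak$. Hence $\det X_{h_1h_2}\notin\pfrak^2$, which violates the trace condition for $h_1h_2$. Only after this dichotomy does conjugation by $\alpha_0=\tbt{\varpi}{0}{0}{1}$ take $H$ into level $m+1$; it multiplies the $(1,2)$ entry by $\varpi$ and divides the $(2,1)$ entry by $\varpi$.

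The tree argument you propose instead does not fill this gap, and as stated it is false. Boundedness gives a common fixed vertex, not a common pointwise fixed $n$-ball. A Helly argument needs the sets of admissible centres to meet pairwise, and that is again the product computation above. Example~\ref{ex:dyadic} is a counterexample to your principle. For $e\ge 2$ the group $H=\SL(2,\mathbf{Z}_2,2)$ is bounded and satisfies $(\mathrm{T}_{e+1})$, so each element fixes pointwise some ball of radius $e+1$. (The single-element case you call the main obstacle is easy: $\varpi^{-n}(\epsilon_h h-1)$ has integral characteristic polynomial, hence preserves a lattice.) Yet no ball of radius $e+1$ is fixed pointwise by all of $H$. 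A unipotent $u_i$ acting on $L/\pfrak^{e+1}L$ by a scalar must act trivially, and the example then gives $L\subseteq\pfrak^2L$. So the passage from elements to the group must use the group law together with the hypothesis $e\le 1$ or $n\le e$, and your sketch never does.

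Two further ingredients are also absent.
\begin{enumerate}
\item The step from level $0$ to level $1$ is not covered by your Step 1, since $h\mapsto\bar h$ is multiplicative there. The paper uses that $\bar H$ consists of elements of trace $\pm 2$, so by Sylow's theorem it is conjugate into $\{\pm1\}U(\mathbf{F})$ (Lemma~\ref{lemma:sylow}). It then makes a homomorphic choice of sign.
\item For $p=2$ and $m=e$, an element may satisfy $(\mathrm{T}_{m+1})$ only with the sign opposite to the one in $h=\epsilon(1+\varpi^mX_h)$. Then $\det X_h$ is a unit and $\overline{X_h}$ is not nilpotent. Lemma~\ref{lemma:signs} shows that the elements satisfying the condition with sign $+1$ form a subgroup $H^+$ with $\{\pm1\}H=\{\pm1\}H^+$. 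For $m=e=1$ this is a genuine computation that uses $e=1$ essentially.
\end{enumerate}
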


Theorem \ref{prop:local} therefore characterizes, purely in terms of traces, the subgroups of $\SL(2,R)$ that are conjugate into $\pm\SL(2,R,\pfrak^n)$. The sign in the conclusion cannot be removed, since $\{\pm 1\}$ satisfies $(\mathrm{T}_n)$ for every $n$. The restriction on $n$ when $e\geq 2$ is necessary.

\begin{example}\label{ex:dyadic}
 Suppose $e\geq 2$, and let $H = \SL(2,\mathbf{Z}_2,2) \leq \SL(2,R)$, where $\mathbf{Z}_2\subseteq R$. Every $h \in H$ has the form $h = 1+2X$ with $X\in\M(2,\mathbf{Z}_2)$, so $\tr h - 2 = -4\det X$ and $\tr h + 2 = 4(1-\det X)$. One of $\det X$ and $1-\det X$ lies in $2\mathbf{Z}_2\subseteq\pfrak^e\subseteq\pfrak^2$, and $v(4) = 2e$, so $H$ satisfies $(\mathrm{T}_{e+1})$. Suppose that $\alpha H\alpha^{-1} \subseteq \pm\SL(2,R,\pfrak^{e+1})$ and put $L = \alpha^{-1}R^2$. For the unipotent elements $u_1 = \tbt{1}{2}{0}{1}$ and $u_2 = \tbt{1}{0}{2}{1}$ of $H$, the conjugate $\alpha u_i\alpha^{-1}$ cannot lie in $-\SL(2,R,\pfrak^{e+1})$, for then \eqref{eq:local-congruence-trace} would give $4 = \tr u_i + 2 \in \pfrak^{2e+2}$. Hence $(u_i - 1)L\subseteq\pfrak^{e+1}L$ for $i = 1,2$. Since $v(2) = e$, the matrix $J = \tbt{0}{1}{1}{0}$ satisfies $JL\subseteq\pfrak L$, and then $L = J^2L \subseteq\pfrak^2L$, which is absurd. So $H$ satisfies $(\mathrm{T}_{e+1})$ but is not conjugate into $\pm\SL(2,R,\pfrak^{e+1})$.
\end{example}

\subsection{Preliminary lemmas}

\begin{lemma}\label{lemma:sylow}
Let $G\leq \SL(2,\mathbf{F})$ be a subgroup such that $\tr g = \pm 2$ for every $g\in G$. Then there is $g_0\in \SL(2,\mathbf{F})$ with $g_0 G g_0^{-1} \subseteq \{\pm 1\} U(\mathbf{F})$.
\end{lemma}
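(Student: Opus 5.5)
The plan is to reduce to a $p$--group and then use that a $p$--subgroup of $\SL(2,\mathbf{F})$ fixes a line. Put $q = |\mathbf{F}|$. An element $g\in\SL(2,\mathbf{F})$ with $\tr g = 2\epsilon$, $\epsilon = \pm 1$, has characteristic polynomial $(x-\epsilon)^2$. So $\epsilon g$ is unipotent and $(\epsilon g)^p = 1$, because $(\epsilon g - 1)^2 = 0$ by Cayley--Hamilton. Hence every element of $G$ has order dividing $2p$. Let $N = G\cap\{g : g \text{ unipotent}\}$; I will first show that the unipotent elements together with $\pm1$ form a structure I can control.

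\textbf{Case $p$ odd.} Consider the image $\bar G$ of $G$ in $\PSL(2,\mathbf{F})$. Every nontrivial element of $\bar G$ is the image of a unique unipotent element $\epsilon g$, so it has order $p$, and $\bar G$ is a $p$--group. Its preimage $\{\pm1\}G$ has order $2|\bar G|$. The subgroup $P$ of unipotent elements in $\{\pm1\}G$ is a Sylow $p$--subgroup, hence a $p$--subgroup of $\SL(2,\mathbf{F})$. A $p$--group $P$ acting on $\mathbf{F}^2\setminus\{0\}$ has $q^2-1$ points, which is prime to $p$, so $P$ fixes a nonzero vector $v$. Conjugating $v$ to $e_1$ by some $g_0\in\SL(2,\mathbf{F})$ puts $P$ in the upper triangular group with diagonal entries $1$, that is, in $U(\mathbf{F})$. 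Then $G\subseteq\{\pm1\}P$ gives the claim. The step that needs care is that the unipotent elements of $\{\pm1\}G$ really form a subgroup. This holds since $\{\pm1\}G\cong \{\pm1\}\times P$: the group $\bar G$ is a $p$--group and $-1$ is central of order prime to $p$, so the extension splits by Schur--Zassenhaus and the complement consists of $p$--elements, that is, unipotents.

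\textbf{Case $p=2$.} Here $-1 = 1$ and the hypothesis says $\tr g = 0 = 2$, so every $g$ is unipotent of order dividing $2$. Then $G$ is an elementary abelian $2$--group, and the same fixed vector argument, with $q^2-1$ odd, gives a common fixed line. Conjugating it to $\mathbf{F}e_1$ puts $G\subseteq U(\mathbf{F})$.

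The main obstacle is confirming that the fixed-point count applies, which needs $P$ to be a genuine $p$--group inside $\SL(2,\mathbf{F})$. This is what the Sylow/Schur--Zassenhaus remark supplies in the odd case; alternatively, one can avoid it by directly applying the fixed-point count to $\bar G$ acting on the $q+1$ points of $\mathbf{P}^1(\mathbf{F})$, where $q+1\equiv 1 \bmod p$. A fixed point gives a line preserved by all of $G$. Each element preserving $\mathbf{F}e_1$ after conjugation is upper triangular with eigenvalues $\epsilon,\epsilon$, hence lies in $\epsilon U(\mathbf{F})$. I would use this projective-line version as the primary route, since it handles both parities uniformly.
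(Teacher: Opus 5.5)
Your proof is correct. Your primary route replaces the paper's appeal to Sylow conjugacy with a direct fixed-point argument.

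\textbf{Common start.} Both arguments begin the same way. The relation $(\epsilon g-1)^2=0$ shows that every nontrivial element of the image $\bar G$ in $\PSL(2,\mathbf{F})$ has order $p$, so $\bar G$ is a $p$--group. This step uses Cauchy's theorem, which you invoke implicitly.

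\textbf{The paper's route.} The paper stays inside finite group theory. It writes $\{\pm1\}G=\{\pm1\}S$ with $S$ a Sylow $p$--subgroup. It reads off from $|\SL(2,\mathbf{F})|=q(q^2-1)$ that $U(\mathbf{F})$ is a Sylow $p$--subgroup, and then conjugates $S$ into $U(\mathbf{F})$ by Sylow's theorem.

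\textbf{Your route.} You let $\bar G$ act on the $q+1\equiv 1 \bmod p$ points of $\mathbf{P}^1(\mathbf{F})$, which gives a line fixed by all of $G$. You then use the trace hypothesis a second time. An upper triangular element of $\SL(2,\mathbf{F})$ with characteristic polynomial $(x-\epsilon)^2$ has both diagonal entries equal to $\epsilon$, so it lies in $\epsilon U(\mathbf{F})$.

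\textbf{Comparison.} Your route never lifts $\bar G$ to a $p$--subgroup of $\SL(2,\mathbf{F})$. It needs no order computation for $\SL(2,\mathbf{F})$, and it treats $p=2$ and $p$ odd uniformly. It also makes the Schur--Zassenhaus remark in your odd case unnecessary. That remark is correct, and amounts to the fact that a Sylow subgroup of index two is normal. The paper's version is shorter only because it delegates the fixed-point count to Sylow's conjugacy theorem, whose standard proof is the same orbit-counting argument.

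\textbf{Minor point.} The notation $N$ introduced in your first paragraph is never used and can be deleted.
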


\begin{proof}
If $\tr g = 2\epsilon$ with $\epsilon=\pm1$, then by the Cayley--Hamilton theorem $(g-\epsilon)^2 = g^2 - 2\epsilon g + 1 = 0$. Hence $g = \epsilon(1+N)$ with $N^2=0$, and $g^p = \epsilon^p(1+pN) = \epsilon^p$. Therefore every element of the image $\bar G$ of $G$ in $\PSL(2,\mathbf{F})$ has order $1$ or $p$, and by Cauchy's theorem $\bar G$ is a $p$--group. Its preimage $P = \{\pm1\}G$ in $\SL(2,\mathbf{F})$ satisfies $P = \{\pm 1\}S$ for a Sylow $p$--subgroup $S$ of $P$. (If $p$ is odd, $S$ has index $2$ in $P$; if $p = 2$, then $-1 = 1$ and $P = S$.) As $|\SL(2,\mathbf{F})| = q(q^2-1)$ with $q = |\mathbf{F}|$, the group $U(\mathbf{F})$ of order $q$ is a Sylow $p$--subgroup of $\SL(2,\mathbf{F})$, so by Sylow's theorems $S$ is conjugate into $U(\mathbf{F})$.
\end{proof}

\begin{lemma}\label{lemma:nilpotent}
Let $V\subseteq \slfrak(2,\mathbf{F})$ be an additive subgroup consisting of nilpotent matrices. Then there is $g_0\in\SL(2,\mathbf{F})$ such that every element of $g_0Vg_0^{-1}$ is strictly upper triangular.
\end{lemma}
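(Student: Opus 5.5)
The plan is to find a common isotropic line for $V$ and conjugate it to the span of $e_1=(1,0)^T$. Then every element of $V$ kills $e_1$, and since it is nilpotent its image lies in $\ker$, so it becomes strictly upper triangular. If $V=0$ there is nothing to prove. Otherwise pick $N_0\in V$ with $N_0\neq 0$. Since $N_0^2=0$ and $N_0\neq 0$, its kernel is a line $\ell=\ker N_0=\operatorname{im}N_0$. Choose $g_0\in\SL(2,\mathbf{F})$ with $g_0\ell=\mathbf{F}e_1$; this is possible because $\SL(2,\mathbf{F})$ acts transitively on lines. After conjugating we may assume $N_0=\tbt{0}{b}{0}{0}$ with $b\neq 0$.

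Next I show that every $N\in V$ is then strictly upper triangular. Write $N=\tbt{a}{c}{d}{-a}$; here "trace zero" means $\tr N=0$, which forces the diagonal to be $a,-a$, and this also covers $p=2$. Nilpotence of $N$ gives $\det N=-a^2-cd=0$. The key input is that $V$ is additive, so $N+N_0\in V$ is also nilpotent. Using \eqref{eq:det-sum}, or directly,
\begin{align*}
0=\det(N+N_0)=\det N+\det N_0+\tr(N_0^\sharp N)=-bd,
\end{align*}
since $N_0^\sharp=-N_0$ and $\tr(N_0N)=bd$. As $b\neq0$ we get $d=0$, and then $a^2=0$ gives $a=0$. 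Hence $N$ is strictly upper triangular.

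The only point needing care is that nilpotence is not a linear condition. This is exactly where the additive closure of $V$ is used: it supplies the cross term $\tr(N_0^\sharp N)$, and this step pins down the common kernel. Everything else is routine.
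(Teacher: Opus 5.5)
Your proposal is correct and follows the paper's proof essentially step for step. You normalize a nonzero $N_0\in V$ to $\tbt{0}{b}{0}{0}$ using its kernel${}={}$image line, then apply \eqref{eq:det-sum} to the nilpotent sum $N+N_0$ to force the lower-left entry to vanish, and finally use $\det N=0$ to kill the diagonal.
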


\begin{proof}
If $X$ has trace zero, then $X^\sharp = -X$, so \eqref{eq:det-sum} gives $\det(X+Y) = \det X + \det Y - \tr(XY)$. We may assume $V\neq 0$. Pick $0\neq X_0 \in V$. As $X_0$ is nilpotent, $\ker X_0 = \operatorname{im} X_0$ is a line. Let $v_1$ span it and choose $v_2$ with $\det(v_1,v_2) = 1$. With $g_0^{-1} = (v_1\ v_2)$ we get $g_0X_0g_0^{-1} = \tbt{0}{b}{0}{0}$ with $b\neq 0$. Replace $V$ by $g_0Vg_0^{-1}$ and let $Y = \tbt{a}{f}{c}{-a}\in V$. As $X_0$, $Y$ and $X_0+Y$ are nilpotent, we get $0 = -\tr(X_0Y) = -bc$, so $c = 0$. Then $0 = \det Y = -a^2$, so $a = 0$.
\end{proof}

\begin{lemma}\label{lemma:lie-algebra}
Let $m\geq 1$. Every $h\in\SL(2,R,\pfrak^m)$ can be written uniquely as $h = 1+\varpi^mX_h$ with $X_h\in\M(2,R)$, and
\begin{enumerate}
\item $\tr X_h = -\varpi^m\det X_h$ and $\tr h - 2 = -\varpi^{2m}\det X_h$;
\item $X_{hh'} = X_h + X_{h'} + \varpi^m X_hX_{h'}$ and $X_{h^{-1}} = X_h^\sharp$, so that $h \mapsto \overline{X_h}$ is a homomorphism from $\SL(2,R,\pfrak^m)$ to the additive group of $\slfrak(2,\mathbf{F})$ with kernel $\SL(2,R,\pfrak^{m+1})$;
\item $X_{ghg^{-1}} = gX_hg^{-1}$ for every $g \in \GL(2,R)$.
\end{enumerate}
\end{lemma}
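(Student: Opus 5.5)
The statement to prove is Lemma \ref{lemma:lie-algebra}, and the plan is a direct computation from the definition $h = 1+\varpi^m X_h$. Existence and uniqueness of $X_h$ are immediate: $h-1\in\varpi^m\M(2,R)$, and $\M(2,R)$ is torsion free, so $X_h = \varpi^{-m}(h-1)$ is the only possible choice.

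For (1) we apply \eqref{eq:det-sum} with $Y=1$ and $Z=\varpi^mX_h$. This gives
\begin{align*}
1=\det h = 1+\varpi^{2m}\det X_h+\varpi^m\tr X_h,
\end{align*}
and cancelling $\varpi^m$ yields $\tr X_h = -\varpi^m\det X_h$. Consequently $\tr h - 2 = \varpi^m\tr X_h = -\varpi^{2m}\det X_h$.

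For (2), expanding $hh' = (1+\varpi^mX_h)(1+\varpi^mX_{h'})$ gives the product formula directly. For the inverse, $h^{-1} = h^\sharp$ because $\det h = 1$. Since $\tr h = 2+\varpi^m\tr X_h$, we get
\begin{align*}
h^\sharp = \tr h - h = 1+\varpi^m(\tr X_h - X_h) = 1+\varpi^m X_h^\sharp,
\end{align*}
so $X_{h^{-1}} = X_h^\sharp$.

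To see that $h\mapsto \overline{X_h}$ is a homomorphism, reduce the product formula modulo $\pfrak$. Because $m\geq 1$, the term $\varpi^mX_hX_{h'}$ vanishes, leaving additivity. The image lies in $\slfrak(2,\mathbf{F})$ because $\tr X_h = -\varpi^m\det X_h\in\pfrak$ by (1). The kernel consists of those $h$ with $X_h\in\varpi\M(2,R)$, that is $h\in 1+\varpi^{m+1}\M(2,R)$, which together with $\det h=1$ is exactly $\SL(2,R,\pfrak^{m+1})$.

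For (3), take $g\in\GL(2,R)$. Then $ghg^{-1} = 1+\varpi^m gX_hg^{-1}$, and $gX_hg^{-1}$ has entries in $R$ because $g^{-1}\in\M(2,R)$. This places $ghg^{-1}$ in $\SL(2,R,\pfrak^m)$, and uniqueness of $X$ then gives $X_{ghg^{-1}} = gX_hg^{-1}$.

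There is no real obstacle; the only point needing care is cancelling a factor of $\varpi^m$ in (1), which is valid because $R$ is a domain.
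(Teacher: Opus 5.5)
Your proof is correct and follows essentially the same route as the paper: the expansion $\det(1+\varpi^mX)=1+\varpi^m\tr X+\varpi^{2m}\det X$ gives (1), the product and adjugate identities give (2), and (3) follows from uniqueness of $X_h$. You also spell out details the paper leaves implicit, namely that the image lies in $\slfrak(2,\mathbf{F})$ because $\tr X_h\in\pfrak$, and that the kernel is $\SL(2,R,\pfrak^{m+1})$.
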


\begin{proof}
For any $2\times 2$ matrix $X$ one has $\det(1+\varpi^mX) = 1 + \varpi^m\tr X + \varpi^{2m}\det X$, which gives (1). In particular $\tr X_h \in \pfrak$. The product formula in (2) is immediate, and $h^{-1} = h^\sharp = \tr(h) - h = 1 + \varpi^m(\tr X_h - X_h)$. Statement (3) is clear.
\end{proof}

\subsection{The inductive step}

The next lemma contains the main computation. Let $\alpha_0 = \tbt{\varpi}{0}{0}{1}$.

\begin{lemma}\label{lemma:key}
Let $m\geq 0$, let $H\leq \SL(2,R)$, and let $\epsilon\colon H\to\{\pm 1\}$ be a homomorphism. Suppose that every $h \in H$ can be written as $h = \epsilon(h)(1+\varpi^mX_h)$ with $X_h = (x_{ij})\in \M(2,R)$ such that $x_{11}, x_{22}, x_{21}\in\pfrak$, and that $\tr(\epsilon(h)h) - 2 \in \pfrak^{2m+2}$ for every $h\in H$. Then either $H$ or $\alpha_0H\alpha_0^{-1}$ is contained in $\pm\SL(2,R,\pfrak^{m+1})$.
\end{lemma}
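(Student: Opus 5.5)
The plan is to turn the trace hypothesis into an entrywise condition on each $X_h$, show that every $h$ satisfies one of two alternatives, observe that each alternative cuts out a subgroup of $H$, and conclude because a group is never the union of two proper subgroups.

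\textbf{Step 1: the entrywise condition.} Fix $h\in H$ and write $\epsilon(h)h = 1+\varpi^mX_h$. Since $\det(1+\varpi^mX_h)=1$, expanding as in Lemma \ref{lemma:lie-algebra}(1) gives
\begin{align*}
\tr(\epsilon(h)h)-2 = \varpi^m\tr X_h = -\varpi^{2m}\det X_h .
\end{align*}
This holds for every $m\geq 0$, including $m=0$. The hypothesis $\tr(\epsilon(h)h)-2\in\pfrak^{2m+2}$ therefore gives $\det X_h\in\pfrak^2$. Now $\det X_h = x_{11}x_{22}-x_{12}x_{21}$, and $x_{11}x_{22}\in\pfrak^2$ because both diagonal entries lie in $\pfrak$. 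Hence $x_{12}x_{21}\in\pfrak^2$. Since $x_{21}\in\pfrak$, this forces, for each $h$, that $x_{12}\in\pfrak$ or $x_{21}\in\pfrak^2$.

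\textbf{Step 2: two subgroups.} Define
\begin{align*}
H_1 &= \{h\in H : \epsilon(h)h\in\SL(2,R,\pfrak^{m+1})\},\\
H_2 &= \{h\in H : \epsilon(h)\alpha_0h\alpha_0^{-1}\in\SL(2,R,\pfrak^{m+1})\}.
\end{align*}
Because $\epsilon$ is a homomorphism and $\SL(2,R,\pfrak^{m+1})$ and its $\alpha_0$-conjugate are groups, both $H_1$ and $H_2$ are subgroups of $H$.

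First, $h\in H_1$ exactly when $X_h\equiv 0\bmod\pfrak$. Given the standing hypotheses on $x_{11},x_{22},x_{21}$, this is the condition $x_{12}\in\pfrak$.

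Second, conjugating gives $\epsilon(h)\alpha_0h\alpha_0^{-1} = 1+\varpi^m\alpha_0X_h\alpha_0^{-1}$, where
\begin{align*}
\alpha_0X_h\alpha_0^{-1} = \tbt{x_{11}}{\varpi x_{12}}{\varpi^{-1}x_{21}}{x_{22}} .
\end{align*}
This matrix is integral because $x_{21}\in\pfrak$. It is $\equiv 0\bmod\pfrak$ exactly when $x_{21}\in\pfrak^2$. So $h\in H_2$ if and only if $x_{21}\in\pfrak^2$.

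\textbf{Step 3: conclusion.} By Step 1, $H = H_1\cup H_2$. A group cannot be the union of two proper subgroups: if $a\notin H_1$ and $b\notin H_2$, then $a\in H_2$ and $b\in H_1$, and $ab$ lies in neither. Hence $H=H_1$ or $H=H_2$. In the first case $H\subseteq\pm\SL(2,R,\pfrak^{m+1})$; in the second, $\alpha_0H\alpha_0^{-1}\subseteq\pm\SL(2,R,\pfrak^{m+1})$.

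The main point requiring care is that the subsets in Step 2 really are subgroups. This is exactly why the sign is packaged as a homomorphism $\epsilon$ rather than chosen separately for each element; in the dyadic case the sign could otherwise not be read off from traces. Everything else is direct computation.
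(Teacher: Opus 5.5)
Your proof is correct. Your Step 1 and your final conclusion match the first and last steps of the paper. The difference is in how the dichotomy ``$x_{12}\in\pfrak$ for all $h$, or $x_{21}\in\pfrak^2$ for all $h$'' is established.

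The paper argues by contradiction. It takes $h_1$ with $(X_{h_1})_{12}\notin\pfrak$ and $h_2$ with $(X_{h_2})_{21}\notin\pfrak^2$. It then computes $X_{h_1h_2}=X_{h_1}+X_{h_2}+\varpi^m X_{h_1}X_{h_2}$ entrywise modulo $\pfrak$ and $\pfrak^2$, and finds that $h_1h_2$ violates $x_{12}x_{21}\in\pfrak^2$.

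You instead identify the two alternatives with membership in the preimages of $\SL(2,R,\pfrak^{m+1})$ and $\alpha_0^{-1}\SL(2,R,\pfrak^{m+1})\alpha_0$ under the homomorphism $h\mapsto\epsilon(h)h$. Closure under products is then automatic, and the paper's matrix computation is replaced by the standard fact that no group is a union of two proper subgroups.

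The logical content is the same: the paper's entrywise computation is a hands-on check that $h_1h_2$ lies in neither subset. Your version is shorter and shows exactly where the homomorphism property of $\epsilon$ is used. It also combines into one step the translation of each alternative into the desired containment, which the paper verifies separately at the end.
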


\begin{proof}
 Since $\epsilon$ is a homomorphism, $X_h = \varpi^{-m}(\epsilon(h)h-1)$ satisfies
\begin{align}\label{eq:X-product}
X_{hh'} = X_h + X_{h'} + \varpi^m X_hX_{h'} \qquad (h,h'\in H).
\end{align}

First we show that $x_{12}x_{21} \in \pfrak^2$ for every $h\in H$. Since $1 = \det(\epsilon(h)h) = 1 + \varpi^m\tr X_h + \varpi^{2m}\det X_h$, we have $\tr(\epsilon(h)h) -2 = \varpi^m\tr X_h = -\varpi^{2m}\det X_h$. The hypothesis gives $\det X_h \in \pfrak^2$. As $x_{11}x_{22}\in\pfrak^2$, we get $x_{12}x_{21}\in\pfrak^2$.

Next, we will show that either $x_{12}\in \pfrak$ for all $h\in H$, or $x_{21}\in \pfrak^2$ for all $h\in H$. Suppose not. Then there are $h_1,h_2\in H$ such that $X_1 = X_{h_1}$ has $(X_1)_{12}\notin \pfrak$, and $X_2 = X_{h_2}$ has $(X_2)_{21}\notin\pfrak^2$. By Step 1, $(X_1)_{21}\in\pfrak^2$ and $(X_2)_{12} \in \pfrak$. Let $X_3 = X_{h_1h_2}$, given by \eqref{eq:X-product}. Since all entries of $X_1$ and $X_2$ other than those in position $(1,2)$ lie in $\pfrak$,
\begin{align*}
(X_1X_2)_{12} & = (X_1)_{11}(X_2)_{12} + (X_1)_{12}(X_2)_{22} \in \pfrak, \\
(X_1X_2)_{21} & = (X_1)_{21}(X_2)_{11} + (X_1)_{22}(X_2)_{21} \in \pfrak^2.
\end{align*}
Therefore $(X_3)_{12} \equiv (X_1)_{12} \bmod \pfrak$ is a unit, while $(X_3)_{21} \equiv (X_1)_{21} + (X_2)_{21} \equiv (X_2)_{21} \bmod \pfrak^2$ does not lie in $\pfrak^2$. This contradicts Step 1 for $h_1h_2 \in H$.

If $x_{12}\in\pfrak$ for all $h$, then $X_h \in \pfrak\M(2,R)$ and $h\in \epsilon(h) \SL(2,R,\pfrak^{m+1})$ for all $h$. Otherwise $x_{21}\in\pfrak^2$ for all $h$, and
\begin{align*}
\alpha_0 h\alpha_0^{-1} = \epsilon(h)\left(1+\varpi^m\alpha_0X_h\alpha_0^{-1}\right), \qquad \alpha_0X_h\alpha_0^{-1} = \tbt{x_{11}}{\varpi x_{12}}{\varpi^{-1}x_{21}}{x_{22}} \in \pfrak\M(2,R).
\end{align*}
As $\alpha_0h\alpha_0^{-1}$ has determinant $1$ and integral entries, it lies in $\epsilon(h)\SL(2,R,\pfrak^{m+1})$.
\end{proof}

To apply Lemma \ref{lemma:key} we need a homomorphic choice of signs compatible with the trace condition. This is where the size of $e$ enters.

\begin{lemma}\label{lemma:signs}
Let $m\geq 1$, and assume that $m \neq e$ or $m = e = 1$. Let $\hat H\leq\pm\SL(2,R,\pfrak^m)$ be a subgroup containing $-1$ and satisfying $(\mathrm{T}_{m+1})$. Then
\begin{align*}
H^+ = \{h\in\hat H\cap\SL(2,R,\pfrak^m) : \tr h - 2\in\pfrak^{2m+2}\}
\end{align*}
is a subgroup of $\hat H$ and $\hat H = \{\pm 1\}H^+$.
\end{lemma}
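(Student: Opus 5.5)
Set $G = \hat H\cap\SL(2,R,\pfrak^m)$. I will first show that $G$ has index at most $2$ in $\hat H$ and that $\hat H = \{\pm1\}G$. Since $\hat H\leq\pm\SL(2,R,\pfrak^m)$, every $h\in\hat H$ can be written as $h = \pm g$ with $g\in\SL(2,R,\pfrak^m)$. Because $-1\in\hat H$, this $g$ lies in $G$. So everything reduces to proving two facts:
\begin{enumerate}
\item[(a)] $H^+$ is a subgroup of $G$;
\item[(b)] every $g\in G$ lies in $H^+$ or in $-H^+$.
\end{enumerate}
We then get $\hat H = \{\pm1\}G = \{\pm1\}H^+$.

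For (a), I use Lemma \ref{lemma:lie-algebra}. For $g\in G$ write $g = 1+\varpi^mX_g$; then $\tr g - 2 = -\varpi^{2m}\det X_g$. Thus $g\in H^+$ if and only if $\det X_g\in\pfrak^2$. I will identify $H^+$ with the preimage of a subgroup under a homomorphism.

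The natural candidate is the map $g\mapsto X_g \bmod \pfrak$, whose image lies in $\slfrak(2,\mathbf F)$ by Lemma \ref{lemma:lie-algebra}(2). When $\det X_g\in\pfrak^2$, the reduction $\overline{X_g}$ is nilpotent. By Lemma \ref{lemma:nilpotent}, the image of $H^+$ would then be conjugate to strictly upper triangular matrices, provided that image is additively closed.

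A cleaner route is to check closure directly. For $g,g'\in H^+$, Lemma \ref{lemma:lie-algebra}(2) gives
\[
X_{gg'} = X_g+X_{g'}+\varpi^mX_gX_{g'}.
\]
Expanding with \eqref{eq:det-sum},
\[
\det X_{gg'} \equiv \det X_g+\det X_{g'}+\tr(X_g^\sharp X_{g'}) \pmod{\pfrak^m}.
\]
Since $m\geq 1$, the error is only known modulo $\pfrak$, so this congruence alone controls $\det X_{gg'}$ just modulo $\pfrak$, not modulo $\pfrak^2$. I will therefore use $(\mathrm{T}_{m+1})$ together with the sign analysis to exclude the bad case.

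The key observation is the sign analysis. For $g\in G$, $(\mathrm{T}_{m+1})$ says that $\tr g - 2$ or $\tr g+2$ lies in $\pfrak^{2m+2}$. Since $\tr g - 2\in\pfrak^{2m}$, the option $\tr g + 2\in\pfrak^{2m+2}$ forces $4\in\pfrak^{2m}$, that is $2e\geq 2m$, i.e.\ $m\leq e$. So:
\begin{enumerate}
\item[(i)] If $m>e$, which covers $p$ odd: then $\tr g-2\in\pfrak^{2m+2}$ for every $g\in G$, so $H^+ = G$ and both (a) and (b) are immediate.
\item[(ii)] If $m<e$: then $-1\in\SL(2,R,\pfrak^m)$, so $-1\in G$ and $-G=G$. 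For $g\in G$, either $g\in H^+$, or $\tr(-g)-2 = -(\tr g+2)\in\pfrak^{2m+2}$, so $-g\in H^+$. This gives (b).
\end{enumerate}

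For (a) in case (ii), I need $\tr$ multiplicativity modulo $\pfrak^{2m+2}$. Take $g,g'\in H^+$ and suppose $gg'\notin H^+$. Then $\tr(gg')+2\in\pfrak^{2m+2}$ while $\tr(gg')-2\notin\pfrak^{2m+2}$.

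I expect to reach a contradiction via Lemma \ref{lemma:key}-style reasoning. Conjugate the nilpotent reductions $\overline{X_g}$, $g\in H^+$, into upper triangular form using Lemma \ref{lemma:nilpotent}, after first showing that their additive span consists of nilpotents. Then the proof of Lemma \ref{lemma:key}, Step 1 onward, shows that $H^+$, or its $\alpha_0$-conjugate, lies in $\SL(2,R,\pfrak^{m+1})$. There \eqref{eq:local-congruence-trace} gives traces $\equiv 2\pmod{\pfrak^{2m+2}}$, which is closed under products. This is the main obstacle: obtaining closure before knowing the subgroup structure. I would try to bootstrap by applying Lemma \ref{lemma:key} to the group generated by $H^+$ with $\epsilon\equiv 1$, verifying its hypothesis generator-by-generator.

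\begin{enumerate}
\item[(iii)] If $m=e=1$: here $v(4)=2=2m$, so both options in $(\mathrm{T}_{2})$ are possible simultaneously. The excluded case $m=e\geq2$ is exactly Example \ref{ex:dyadic}. I would handle $m=e=1$ by a direct computation modulo $\pfrak^4$, using $2 = \varpi u$ with $u$ a unit.
\end{enumerate}
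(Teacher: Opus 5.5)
Your case $m>e$ is correct and matches the paper's argument, but the other two cases are not proved. For $m<e$ you missed that $m+1\leq e$ gives $4\in\pfrak^{2e}\subseteq\pfrak^{2m+2}$. Hence $\tr g-2\equiv\tr g+2\pmod{\pfrak^{2m+2}}$, and $(\mathrm{T}_{m+1})$ already forces $\tr g-2\in\pfrak^{2m+2}$ for every $g\in\hat H$. Since also $-1\in\SL(2,R,\pfrak^m)$, you get $H^+=G=\hat H$, and there is nothing to prove.

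The bootstrap you sketch for $m<e$ is circular. Lemma \ref{lemma:key} with $\epsilon\equiv 1$ needs $\tr h-2\in\pfrak^{2m+2}$ for \emph{every} $h$ in the group, because its proof uses the trace hypothesis on products $h_1h_2$. That is exactly the assertion $\langle H^+\rangle=H^+$, and checking generators does not suffice. Likewise, before invoking Lemma \ref{lemma:nilpotent} you need $\overline{X_g}+\overline{X_{g'}}=\overline{X_{gg'}}$ to be nilpotent. This amounts to $\det X_{gg'}\in\pfrak$, which is the closure you are trying to establish.

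The case $m=e=1$ is the real content of the lemma, and announcing ``a direct computation modulo $\pfrak^4$'' is not an argument. Here $\tr h\mp 2$ do not agree modulo $\pfrak^4$. With $u=2/\varpi$, $(\mathrm{T}_2)$ only says $\det X_h\equiv 0$ or $\det X_h\equiv u^2\pmod{\pfrak^2}$, and you must show that the set where $\det X_h\in\pfrak^2$ is closed under products. The missing idea is to compare $h_1h_2$ with $h_1h_2^{-1}$. For $h_1,h_2\in H^+$, use $\tr X_i=-\varpi\det X_i\in\pfrak^3$, $\overline{X_i}^{\,2}=0$, $X_{h_2^{-1}}=X_2^\sharp$, and apply \eqref{eq:det-sum} twice. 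This gives
\[
\det X_{h_1h_2}\equiv-\tr(X_1X_2),\qquad \det X_{h_1h_2^{-1}}\equiv\tr(X_1X_2)\pmod{\pfrak^2}.
\]
If $h_1h_2\notin H^+$, the first congruence gives $u^2$, so the second gives $-u^2$. Then $(\mathrm{T}_2)$ for $h_1h_2^{-1}$ forces $u^2\in\pfrak^2$ or $2u^2\in\pfrak^2$, both false because $v(2)=1$. This is precisely where $e=1$ is used (for $m=e\geq 2$ the lemma fails, as you note). Without such an argument, your part (a) is unproved in this case.
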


\begin{proof}
Suppose first that $m>e$. Then $-1\notin\SL(2,R,\pfrak^m)$, so $\hat H = \{\pm 1\}\times(\hat H\cap\SL(2,R,\pfrak^m))$. For $h\in\hat H\cap\SL(2,R,\pfrak^m)$ we have $\tr h\equiv 2 \bmod \pfrak^{2m}$ by Lemma \ref{lemma:lie-algebra}(1), and $\tr h\equiv 2\epsilon_h\bmod\pfrak^{2m+2}$. If $\epsilon_h = -1$, then $4\in\pfrak^{2m}$, that is $m\leq e$, a contradiction. Hence $H^+ = \hat H\cap\SL(2,R,\pfrak^m)$.

Suppose next that $m<e$. Then $-1\in\SL(2,R,\pfrak^m)$, so $\hat H\subseteq \SL(2,R,\pfrak^m)$. Since $2m+2\leq 2e = v(4)$, the two numbers $\tr h \mp 2$ are congruent modulo $\pfrak^{2m+2}$, so $(\mathrm{T}_{m+1})$ gives $\tr h - 2\in\pfrak^{2m+2}$ for every $h$. Hence $H^+ = \hat H$.

Finally suppose $m = e = 1$, and put $u = 2/\varpi\in R^\times$. Then $\hat H\subseteq\SL(2,R,\pfrak)$. For $h = 1+\varpi X_h\in\hat H$, Lemma \ref{lemma:lie-algebra}(1) gives
\begin{align*}
\tr h - 2 = -\varpi^2\det X_h, \qquad \tr h + 2 = \varpi^2\left(u^2 - \det X_h\right).
\end{align*}
Thus $(\mathrm{T}_2)$ says that $\det X_h\equiv 0$ or $\det X_h \equiv u^2 \bmod\pfrak^2$, and $h\in H^+$ exactly when $\det X_h \in \pfrak^2$. As $\tr(-h) = -\tr h$, exactly one of $h$ and $-h$ lies in $H^+$. Since $\tr h^{-1} = \tr h$, the set $H^+$ is closed under inversion, and it remains to show that it is closed under products. Let $h_1, h_2\in H^+$ and $X_i = X_{h_i}$. Then $\det X_i\in\pfrak^2$, $\tr X_i = -\varpi\det X_i\in\pfrak^3$, and $\overline{X_i}$ is nilpotent. By Lemma \ref{lemma:lie-algebra}(2), $h_2^{-1} = 1+\varpi X_2^\sharp$, and $X_2^\sharp$ has the same three properties.

We claim that if $X'\in\{X_2, X_2^\sharp\}$ and $h' = 1+\varpi X'$, then $\det X_{h_1h'}\equiv -\tr(X_1X') \bmod\pfrak^2$. By \eqref{eq:det-sum},
\begin{align*}
\det(X_1+X') = \det X_1 + \det X' + \tr X_1 \tr X' - \tr(X_1X') \equiv -\tr(X_1X') \bmod \pfrak^2.
\end{align*}
Moreover $X_{h_1h'} = (X_1+X') + \varpi X_1X'$, and by \eqref{eq:det-sum} again,
\begin{align*}
\det X_{h_1h'} - \det(X_1+X') = \varpi\left(\tr(X_1+X')\tr(X_1X') - \tr(X_1^2X') - \tr(X_1X'^2)\right) + \varpi^2\det(X_1X').\end{align*}
The bracket lies in $\pfrak$, because $\tr(X_1+X')\in\pfrak$ and $\overline{X_1}^2 = \overline{X'}^2 = 0$. This proves the claim.

Now $\tr(X_1X_2^\sharp) = \tr X_1\tr X_2 - \tr(X_1X_2)\equiv -\tr(X_1X_2)\bmod\pfrak^2$, so the claim gives
\begin{align*}
\det X_{h_1h_2}\equiv -\tr(X_1X_2), \qquad \det X_{h_1h_2^{-1}}\equiv \tr(X_1X_2) \pmod{\pfrak^2}.
\end{align*}
If $h_1h_2\notin H^+$, then $\det X_{h_1h_2}\equiv u^2$, hence $\det X_{h_1h_2^{-1}}\equiv -u^2\bmod\pfrak^2$. By $(\mathrm{T}_2)$ for $h_1h_2^{-1}$, either $u^2\in\pfrak^2$, which is false, or $2u^2\in\pfrak^2$, which contradicts $v(2) = 1$. Hence $h_1h_2\in H^+$.
\end{proof}

\begin{remark}
Lemma \ref{lemma:signs} cannot hold for $m = e\geq 2$. Otherwise the proof of Lemma \ref{lemma:one-step} below would conjugate the group of Example \ref{ex:dyadic}, which lies in $\SL(2,R,\pfrak^e)$, into $\pm\SL(2,R,\pfrak^{e+1})$. In the last case of the proof above, the final contradiction requires $2u^2\notin\pfrak^2$, which holds exactly when $e = 1$.
\end{remark}

\begin{lemma}\label{lemma:one-step}
Let $m\geq 0$, and assume that $m\neq e$ or $e\leq 1$. Let $H\leq\pm\SL(2,R,\pfrak^m)$ be a subgroup satisfying $(\mathrm{T}_{m+1})$. Then there exists $\alpha\in\GL(2,F)$ with $\alpha H \alpha^{-1}\subseteq\pm\SL(2,R,\pfrak^{m+1})$.
\end{lemma}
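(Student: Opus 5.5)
Replace $H$ by $\hat H=\{\pm1\}H$; this still satisfies $(\mathrm{T}_{m+1})$ and lies in $\pm\SL(2,R,\pfrak^m)$, and it suffices to conjugate $\hat H$. The plan is to reduce to the situation of Lemma \ref{lemma:key}. That lemma needs two ingredients: a homomorphism $\epsilon$ compatible with the trace condition, and a conjugation by an element of $\GL(2,R)$ that makes the reductions $\overline{X_h}$ strictly upper triangular. We then conclude with either the identity or $\alpha_0$.

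\emph{Case $m=0$.} Here $\hat H\le\SL(2,R)$ and $(\mathrm{T}_1)$ gives $\tr h\equiv\pm2 \bmod \pfrak^2$, so in particular $\tr\bar h=\pm2$. By Lemma \ref{lemma:sylow} there is $g_0\in\SL(2,\mathbf F)$ with $g_0\bar{\hat H}g_0^{-1}\subseteq\{\pm1\}U(\mathbf F)$. Lift $g_0$ to $g\in\SL(2,R)$, which is possible since $\SL(2,R)\to\SL(2,\mathbf F)$ is surjective, and replace $\hat H$ by $g\hat Hg^{-1}$.

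Each $h$ now reduces to $\pm$ an upper unipotent matrix. For $p$ odd, the sign $\bar\epsilon(h)$ of $\bar h$ is well defined, and $h\mapsto\bar\epsilon(h)$ is a homomorphism, being the composition of reduction with the projection $\{\pm1\}U\to\{\pm1\}$. Moreover $\tr(\epsilon(h)h)-2\in\pfrak^2$, because $\tr h\mp2$ differ by a unit, so the sign chosen by $(\mathrm{T}_1)$ agrees with $\bar\epsilon(h)$. For $p=2$ we take $\epsilon\equiv1$, and $(\mathrm{T}_1)$ gives $\tr h-2\in\pfrak^2$ directly, since $\tr h+2\equiv\tr h-2 \bmod 4$ and we may need $e\ge1$. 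Writing $h=\epsilon(h)(1+X_h)$, the reduction of $X_h$ is strictly upper triangular. Lemma \ref{lemma:key} then applies with $m=0$.

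\emph{Case $m\ge1$.} The hypothesis $m\ne e$ or $e\le1$ gives exactly $m\ne e$ or $m=e=1$. So Lemma \ref{lemma:signs} yields a subgroup $H^+$ with $\hat H=\{\pm1\}H^+$. Every element of $H^+$ lies in $\SL(2,R,\pfrak^m)$ and satisfies $\tr h-2\in\pfrak^{2m+2}$. It suffices to conjugate $H^+$ into $\SL(2,R,\pfrak^{m+1})$ or into $\pm\SL(2,R,\pfrak^{m+1})$.

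By Lemma \ref{lemma:lie-algebra}(1), $\det X_h\in\pfrak^2$ for $h\in H^+$, so $\tr\overline{X_h}=0$ and $\det\overline{X_h}=0$, i.e.\ $\overline{X_h}$ is nilpotent. By Lemma \ref{lemma:lie-algebra}(2) the set $V=\{\overline{X_h}:h\in H^+\}$ is an additive subgroup of $\slfrak(2,\mathbf F)$ consisting of nilpotents. Lemma \ref{lemma:nilpotent} gives $g_0$ making $V$ strictly upper triangular. Lift $g_0$ to $g\in\SL(2,R)$ and conjugate; by Lemma \ref{lemma:lie-algebra}(3) this conjugates $X_h$ to $gX_hg^{-1}$. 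Now apply Lemma \ref{lemma:key} to $gH^+g^{-1}$ with $\epsilon\equiv1$. Its hypotheses hold: the entries $x_{11},x_{22},x_{21}$ lie in $\pfrak$, and $\tr h-2\in\pfrak^{2m+2}$. Either $gH^+g^{-1}$ or $\alpha_0 gH^+g^{-1}\alpha_0^{-1}$ then lies in $\pm\SL(2,R,\pfrak^{m+1})$, and adjoining $\pm1$ gives the claim for $\hat H$ with $\alpha=g$ or $\alpha=\alpha_0 g$.

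\emph{Main obstacle.} The delicate point is the sign bookkeeping when $m=0$ and $p=2$. Here $(\mathrm{T}_1)$ only gives $\tr h\equiv\pm2\bmod\pfrak^2$. If $e=1$, the congruences $\tr h+2\equiv\tr h-2$ hold only modulo $\pfrak$, not modulo $\pfrak^2$, so "$\epsilon\equiv1$" must be justified. My first attempt: since $4\in\pfrak^2$ when $e\ge1$, the two conditions $\tr h-2\in\pfrak^2$ and $\tr h+2\in\pfrak^2$ coincide, so $\epsilon\equiv1$ works. Because $v(4)=2e\ge2$, this seems to go through, and the remaining steps are mechanical.
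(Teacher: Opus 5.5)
Your proof is correct and follows the paper's argument essentially step for step: Lemma \ref{lemma:sylow} together with the sign homomorphism feeds Lemma \ref{lemma:key} when $m=0$, and Lemmas \ref{lemma:signs}, \ref{lemma:nilpotent} and \ref{lemma:key} (with $\epsilon\equiv 1$) handle $m\geq 1$. The worry in your last paragraph is unfounded: $p=2$ forces $e\geq 1$, so $4\in\pfrak^{2e}\subseteq\pfrak^2$ even when $e=1$, and $\epsilon\equiv 1$ is justified exactly as in the paper, where $2-2\epsilon'\in\{0,4\}\subseteq\pfrak^2$.
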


\begin{proof}
Recall that reduction $\SL(2,R)\to\SL(2,\mathbf{F})$ is surjective, since $\SL(2,\mathbf{F})$ is generated by elementary matrices. Suppose $m=0$. For $h\in H$ we have $\tr\bar h = 2\bar\epsilon_h = \pm 2$. By Lemma \ref{lemma:sylow} there is $g\in\SL(2,R)$ such that every element of $gHg^{-1}$ reduces into $\{\pm 1\}U(\mathbf{F})$, and we replace $H$ by $gHg^{-1}$. If $p$ is odd, then $\{\pm 1\}U(\mathbf{F}) = \{\pm1\}\times U(\mathbf{F})$, and we let $\epsilon(h)$ be the sign with $\bar h\in\epsilon(h)U(\mathbf{F})$. If $p = 2$, we put $\epsilon(h) = 1$. In both cases $\epsilon$ is a homomorphism, and $h = \epsilon(h)(1+X_h)$ with $X_h = \epsilon(h)h-1$, whose entries $(1,1)$, $(2,2)$ and $(2,1)$ lie in $\pfrak$. By $(\mathrm{T}_1)$ there is a sign $\epsilon'$ with $\tr(\epsilon(h)h) - 2\epsilon'\in\pfrak^2$. If $p$ is odd, then $\tr(\epsilon(h)h)\equiv 2\bmod\pfrak$ forces $\epsilon' = 1$. If $p=2$, then $2 - 2\epsilon'\in\{0,4\}\subseteq\pfrak^2$. In both cases $\tr(\epsilon(h)h) - 2\in\pfrak^2$, and Lemma \ref{lemma:key} applies.

Suppose $m\geq 1$. The group $\hat H = \{\pm 1\}H$ satisfies the hypotheses of Lemma \ref{lemma:signs}. Let $H^+$ be the subgroup it provides. For $h\in H^+$ write $h = 1+\varpi^mX_h$. By Lemma \ref{lemma:lie-algebra}(1), $\tr h - 2 = -\varpi^{2m}\det X_h$, so $\det X_h\in\pfrak^2$ and $\tr X_h = -\varpi^m\det X_h \in \pfrak$. Thus $\overline{X_h}$ is nilpotent, and by Lemma \ref{lemma:lie-algebra}(2) the set $V = \{\overline{X_h} : h\in H^+\}$ is an additive subgroup of $\slfrak(2,\mathbf{F})$. Lemma \ref{lemma:nilpotent} provides $g\in\SL(2,R)$ such that $\overline{gX_hg^{-1}}$ is strictly upper triangular for all $h\in H^+$. By Lemma \ref{lemma:lie-algebra}(3), $gH^+g^{-1}$ satisfies the hypotheses of Lemma \ref{lemma:key} with $\epsilon\equiv 1$. Hence there is $\alpha\in\{g, \alpha_0 g\}$ with $\alpha H^+\alpha^{-1}\subseteq\pm\SL(2,R,\pfrak^{m+1})$, and then $\alpha H\alpha^{-1}\subseteq \alpha\hat H\alpha^{-1} = \{\pm 1\}\alpha H^+\alpha^{-1} \subseteq\pm\SL(2,R,\pfrak^{m+1})$.
\end{proof}

\begin{proof}[Proof of Theorem \ref{prop:local}]
We argue by induction on $n$, the case $n=0$ being trivial. Let $n \geq 0$, suppose that $e\leq 1$ or $n+1\leq e$, and let $H$ satisfy $(\mathrm{T}_{n+1})$. Then $H$ satisfies $(\mathrm{T}_n)$, and the hypothesis of the proposition holds for $n$, so there is $\alpha$ with $\alpha H\alpha^{-1}\subseteq\pm\SL(2,R,\pfrak^n)$. If $e\leq 1$, or if $n+1\leq e$ so that $n\neq e$, then Lemma \ref{lemma:one-step} applies to $\alpha H\alpha^{-1}$ with $m=n$ and provides $\alpha'$ with $\alpha'\alpha H\alpha^{-1}\alpha'^{-1}\subseteq\pm\SL(2,R,\pfrak^{n+1})$.
\end{proof}

\section{Arithmetic Fuchsian and Kleinian groups}\label{sec:arithmetic-groups}

\subsection{Hyperbolic geometry}

The group of orientation preserving isometries of $\mathbf{H}^2$ is $\PSL(2,\mathbf{R})$ and that of $\mathbf{H}^3$ is $\PSL(2,\mathbf{C})$. An element $g\in\GL(2,\kinf)$ acts on $\mathbf{H}^n$ through its image in $\PGL(2,\kinf)$ by an isometry, which reverses orientation when $\kinf = \mathbf{R}$ and $\det g<0$. Consequently, if $\Lambda$ is a discrete subgroup of $\PSL(2,\kinf)$ and $g\in\GL(2,\kinf)$, then $\Lambda\backslash\mathbf{H}^n$ and $g\Lambda g^{-1}\backslash\mathbf{H}^n$ are isometric. For a subgroup $\Lambda$ of $\PSL(2,\kinf)$ we write $\tilde\Lambda$ for its full preimage in $\SL(2,\kinf)$. Thus $-1\in\tilde\Lambda$ and $\tr\tilde\Lambda = -\tr\tilde\Lambda$.

Let $\Lambda\leq\PSL(2,\kinf)$ be a torsion free cocompact lattice. Every nontrivial element of $\Lambda$ is hyperbolic (if $n=2$) or loxodromic (if $n=3$), and its conjugacy class corresponds to a closed geodesic of $\Lambda\backslash\mathbf{H}^n$. If $\gamma \in \tilde\Lambda$ is conjugate to $\pm\tbt{e^{\lambda/2}}{0}{0}{e^{-\lambda/2}}$ with $\operatorname{Re}\lambda>0$, then $\lambda$, which is well defined modulo $2\pi i\mathbf{Z}$, is the \textbf{complex length} of $\gamma$ and $\ell(\gamma) = \operatorname{Re}\lambda$ is the length of the corresponding closed geodesic. In particular
\begin{align}\label{eq:tracelength}
\tr\gamma = \pm 2\cosh(\lambda/2), \qquad \text{and, if } n = 2,\quad |\tr\gamma| = 2\cosh(\ell(\gamma)/2).
\end{align}
Two loxodromic elements of $\SL(2,\mathbf{C})$ have the same complex length if and only if their traces agree up to sign.

\subsection{Groups derived from quaternion algebras}

Let $k$ be a number field with $r_1$ real and $r_2$ complex places and let $B$ be a quaternion algebra over $k$ ramified at $s$ real places. Then \cite[Theorem 8.1.1]{Maclachlan.Reid-[ArithmeticHyperbolic3Manifolds]2003}
\begin{align}\label{eqn:decomp}
B \otimes_\mathbf{Q} \mathbf{R} \cong \mathcal{H}^{s} \oplus \M(2,\mathbf{R})^{r_1-s} \oplus \M(2,\mathbf{C})^{r_2}.
\end{align}
Assume that $B$ satisfies the Eichler condition. If $\Ocal$ is an order of $B$, the image of $\Ocal^1$ under the projection of \eqref{eqn:decomp} to the noncompact factors is a lattice in $\SL(2,\mathbf{R})^{r_1-s}\times\SL(2,\mathbf{C})^{r_2}$, and it is cocompact if and only if $B$ is a division algebra \cite[Theorem 8.1.2]{Maclachlan.Reid-[ArithmeticHyperbolic3Manifolds]2003}. We are interested in the two cases in which there is a single noncompact factor, namely algebras of Fuchsian and Kleinian type as defined in the introduction. Let $\nu_0$ be the unramified archimedean place. We fix once and for all the embedding $k\hookrightarrow\kinf$ given by $\nu_0$ and identify $k$ with its image, and we fix an embedding of $k$--algebras $\psi\colon B\to\M(2,\kinf)$ inducing $B_{\nu_0}\cong\M(2,\kinf)$. Any other choice differs by conjugation by an element of $\GL(2,\kinf)$, by the Skolem--Noether theorem. Note that $\trd = \tr\circ\psi$ and $\nrd = \det\circ\psi$. For an order $\Ocal$ and an ideal $\afrak$ we write $\Gamma_\Ocal(\afrak)$ for the image of $\psi(\Ocal^1(\afrak))$ in $\PSL(2,\kinf)$ and put $\Gamma_\Ocal = \Gamma_\Ocal(R)$. Then $\Gamma_\Ocal$ is a cocompact lattice when $B$ is a division algebra, $\Gamma_\Ocal(\afrak)$ is a normal subgroup of finite index, and $\tilde\Gamma_\Ocal(\afrak) = \pm\psi(\Ocal^1(\afrak))$. A subgroup of $\PSL(2,\kinf)$ is an \textbf{arithmetic Fuchsian} (respectively \textbf{Kleinian}) \textbf{group} if it is commensurable in the wide sense with some $\Gamma_\Ocal$ as above, and it is \textbf{derived from a quaternion algebra} if it is a finite index subgroup of some $\Gamma_\Ocal$.

If $\beta\in B^\times$, then $\psi(\beta)\Gamma_\Ocal(\afrak)\psi(\beta)^{-1} = \Gamma_{\beta\Ocal\beta^{-1}}(\afrak)$, so the isometry class of $X_\Ocal(\afrak)$ depends only on the conjugacy class of $\Ocal$. Different maximal orders give lattices of the same covolume.

\begin{lemma}\label{lemma:covolume}
Let $B$ be of Fuchsian or Kleinian type, let $\Ocal$ and $\Ocal'$ be maximal orders of $B$, and let $\afrak$ be a nonzero ideal of $R$. Then $\Gamma_\Ocal(\afrak)$ and $\Gamma_{\Ocal'}(\afrak)$ have the same covolume.
\end{lemma}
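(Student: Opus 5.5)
The plan is to reduce the statement to a comparison of indices, since covolume is multiplicative in the index. Recall that $\Gamma_\Ocal$ and $\Gamma_{\Ocal'}$ have the same covolume: by the volume formula of Borel (\cite[Theorem 11.1.3]{Maclachlan.Reid-[ArithmeticHyperbolic3Manifolds]2003}), the covolume of $\Gamma_\Ocal$ for a maximal order depends only on $k$ and $\Ram(B)$. Then
\begin{align*}
\vol(\Gamma_\Ocal(\afrak)\backslash\mathbf{H}^n) = [\Gamma_\Ocal:\Gamma_\Ocal(\afrak)]\cdot\vol(\Gamma_\Ocal\backslash\mathbf{H}^n),
\end{align*}
so it suffices to show that the index $[\Gamma_\Ocal:\Gamma_\Ocal(\afrak)]$ is the same for all maximal orders.

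To compute the index, I pass to $\Ocal^1$. We have $[\Gamma_\Ocal:\Gamma_\Ocal(\afrak)] = [\Ocal^1:\{\pm1\}\Ocal^1(\afrak)]$, which equals $[\Ocal^1:\Ocal^1(\afrak)]$ divided by $1$ or $2$. The divisor is $2$ precisely when $-1\notin\Ocal^1(\afrak)$, that is when $2\notin\afrak$. This condition does not depend on $\Ocal$. Moreover $[\Ocal^1:\Ocal^1(\afrak)]$ is the order of the image of $\Ocal^1$ in $(\Ocal/\afrak\Ocal)^\times$. I will show that this image is the full norm-one group
\begin{align*}
\prod_{\pfrak\mid\afrak}(\Ocal_\pfrak/\pfrak^{n_\pfrak}\Ocal_\pfrak)^1,
\end{align*}
using strong approximation for $B^1$. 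Strong approximation applies because $B$ satisfies the Eichler condition, so $B^1$ is dense in $\prod_{\pfrak\mid\afrak}B^1_\pfrak$ in a way compatible with integrality at the other primes. Concretely, the closure of $\Ocal^1$ in $\prod_{\pfrak\mid\afrak}\Ocal_\pfrak^1$ is everything, and the reduction $\Ocal_\pfrak^1\to(\Ocal_\pfrak/\pfrak^{n}\Ocal_\pfrak)^\times$ has open kernel.

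Local conjugacy then finishes the argument. By Proposition \ref{prop:local-orders}, for each $\pfrak$ there is $\beta_\pfrak\in B_\pfrak^\times$ with $\Ocal'_\pfrak = \beta_\pfrak\Ocal_\pfrak\beta_\pfrak^{-1}$. This conjugation induces isomorphisms between the finite groups $\Ocal^1_\pfrak/\Ocal^1_\pfrak(\pfrak^{n})$ and $\Ocal'^1_\pfrak/\Ocal'^1_\pfrak(\pfrak^n)$. Hence the orders of the images, and so the indices, coincide. Concretely, when $\pfrak$ is unramified each local factor has order $|\SL(2,R/\pfrak^n)|$, whatever the order.

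The main obstacle is the strong approximation step. Surjectivity of $\Ocal^1\to\prod_\pfrak\Ocal^1_\pfrak/\Ocal^1_\pfrak(\pfrak^{n_\pfrak})$ needs the Eichler condition, because $B^1_{\nu_0}$ is noncompact, and it needs $B^1$ simply connected. I would cite \cite[Theorem 7.7.5]{Maclachlan.Reid-[ArithmeticHyperbolic3Manifolds]2003}. Alternatively, one can avoid it by using that covolume equals an adelic measure of $\prod_\pfrak\Ocal^1_\pfrak(\afrak)$, which is manifestly invariant under local conjugation.
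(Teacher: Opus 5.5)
Your proof is correct, but it takes a different route from the paper's.

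\textbf{Your route.} You factor the covolume as $[\Gamma_\Ocal:\Gamma_\Ocal(\afrak)]\cdot\vol(\Gamma_\Ocal\backslash\mathbf{H}^n)$ and import the equality of the second factor for all maximal orders from Borel's volume formula. You then compute the index exactly. Strong approximation makes $\Ocal^1\to\prod_{\pfrak\mid\afrak}\Ocal^1_\pfrak/\Ocal^1_\pfrak(\pfrak^{n_\pfrak})$ surjective. Injectivity of the induced map on $\Ocal^1/\Ocal^1(\afrak)$ is \eqref{eq:aO-local}. Local conjugacy then identifies the finite factors.

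\textbf{The paper's route.} The paper never computes a volume, nor an index relative to $\Gamma_\Ocal$. It compares $\Ocal^1(\afrak)$ and $\Ocal'^1(\afrak)$ directly through their intersection. Let $U$ and $U'$ be the compact open subgroups $\prod_\pfrak \Ocal^1_\pfrak\cap(1+\afrak\Ocal_\pfrak)$ and its analogue for $\Ocal'$. Strong approximation gives $[\Ocal^1(\afrak):\Ocal^1(\afrak)\cap\Ocal'^1(\afrak)]=[U:U\cap U']$, and symmetrically for $\Ocal'$. The ratio of the two indices is therefore $\mu(U)/\mu(U')$. This ratio is $1$ because conjugation by $g_\pfrak\in B_\pfrak^\times$ preserves Haar measure on $B^1_\pfrak$: its modulus is a homomorphism with finite image, hence trivial.

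\textbf{What each buys.} The paper's argument does not depend on Borel's formula. In fact its case $\afrak=R$ is exactly the equality of covolumes of the groups $\Gamma_\Ocal$ that you import, at the cost of the short modulus argument. Your argument gives an explicit index: $\prod_{\pfrak\mid\afrak}|\SL(2,R/\pfrak^{n_\pfrak})|$ divided by $1$ or $2$ when no prime of $\afrak$ ramifies in $B$. Together with Borel, this yields an explicit formula for $\vol(X_\Ocal(\afrak))$.

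\textbf{Caveat on your closing aside.} The adelic description of the covolume does not actually avoid strong approximation. Identifying $\Gamma\backslash\SL(2,\kinf)$ with an adelic quotient requires $G(\mathbf{A}_f)=G(k)U$, which is itself strong approximation. Also, invariance of the local measures under conjugation by $B_\pfrak^\times$ is not quite "manifest". Conjugation by elements outside $k_\pfrak^\times B_\pfrak^1$ is an outer automorphism of $B^1_\pfrak$, so one needs the modulus argument the paper gives.
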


\begin{proof}
Let $G = B^1$, viewed as an algebraic group over $k$, and let $G(\mathbf{A}_f)$ be the restricted product of the groups $B_\pfrak^1$. Put $K_\pfrak = \Ocal_\pfrak^1\cap(1+\afrak\Ocal_\pfrak)$ and $K'_\pfrak = \Ocal'^1_\pfrak\cap(1+\afrak\Ocal'_\pfrak)$, and let $U = \prod_\pfrak K_\pfrak$ and $U' = \prod_\pfrak K'_\pfrak$. These are compact open subgroups of $G(\mathbf{A}_f)$, and by Proposition \ref{prop:local-global-lattices} and \eqref{eq:aO-local} we have $\Ocal^1(\afrak) = G(k)\cap U$ and $\Ocal'^1(\afrak) = G(k)\cap U'$. Since $B$ is unramified at an archimedean place, strong approximation \cite[Theorem 7.7.5]{Maclachlan.Reid-[ArithmeticHyperbolic3Manifolds]2003} says that $G(k)$ is dense in $G(\mathbf{A}_f)$. Hence $\Ocal^1(\afrak)$ is dense in $U$, and as $U\cap U'$ is open, the natural map
\begin{align*}
\Ocal^1(\afrak)\big/\left(\Ocal^1(\afrak)\cap\Ocal'^1(\afrak)\right) \longrightarrow U/(U\cap U')
\end{align*}
is a bijection. Thus $[\Ocal^1(\afrak):\Ocal^1(\afrak)\cap\Ocal'^1(\afrak)] = [U:U\cap U']$, and similarly with the roles of $\Ocal$ and $\Ocal'$ exchanged. Let $\mu = \prod_\pfrak\mu_\pfrak$ be a Haar measure on $G(\mathbf{A}_f)$. Then $[U:U\cap U'] / [U':U\cap U'] = \mu(U)/\mu(U')$, and it suffices to show $\mu_\pfrak(K_\pfrak) = \mu_\pfrak(K'_\pfrak)$ for every $\pfrak$.

By Proposition \ref{prop:local-orders} there is $g_\pfrak\in B_\pfrak^\times$ with $\Ocal'_\pfrak = g_\pfrak\Ocal_\pfrak g_\pfrak^{-1}$, so $K'_\pfrak = g_\pfrak K_\pfrak g_\pfrak^{-1}$. Conjugation by $g\in B_\pfrak^\times$ is a continuous automorphism of $B_\pfrak^1$, so it multiplies $\mu_\pfrak$ by a positive number $\Delta(g)$, and $\Delta\colon B_\pfrak^\times\to\mathbf{R}_{>0}$ is a homomorphism. It is trivial on $k_\pfrak^\times B_\pfrak^1$, because $B_\pfrak^1$ is unimodular. As $\nrd$ maps $B_\pfrak^\times/k_\pfrak^\times B_\pfrak^1$ injectively into the finite group $k_\pfrak^\times/(k_\pfrak^\times)^2$, the image of $\Delta$ is a finite subgroup of $\mathbf{R}_{>0}$, hence trivial. Therefore $\mu_\pfrak(K'_\pfrak) = \mu_\pfrak(K_\pfrak)$. We conclude that $[\Ocal^1(\afrak):\Ocal^1(\afrak)\cap\Ocal'^1(\afrak)] = [\Ocal'^1(\afrak):\Ocal^1(\afrak)\cap\Ocal'^1(\afrak)]$, so the lattices $\psi(\Ocal^1(\afrak))$ and $\psi(\Ocal'^1(\afrak))$ of $\SL(2,\kinf)$ have the same covolume. Finally $-1\in\Ocal^1(\afrak)$ if and only if $2\in\afrak\Ocal\cap k = \afrak$, and likewise for $\Ocal'$, so the images in $\PSL(2,\kinf)$ also have the same covolume.
\end{proof}

\subsection{Invariants}

Let $\Lambda\leq\PSL(2,\kinf)$ be a non-elementary finitely generated group. Its \textbf{trace field} is $\mathbf{Q}(\tr\tilde\Lambda)$, and the $\mathbf{Q}(\tr\tilde\Lambda)$--span $A_0\tilde\Lambda$ of $\tilde\Lambda$ in $\M(2,\kinf)$ is a quaternion algebra over $\mathbf{Q}(\tr\tilde\Lambda)$ \cite[Theorem 3.2.1]{Maclachlan.Reid-[ArithmeticHyperbolic3Manifolds]2003}. Let $\Lambda^{(2)}\leq\Lambda$ be the subgroup generated by squares, and let $\tilde\Lambda^{(2)}\leq\SL(2,\kinf)$ be the subgroup generated by the squares of elements of $\tilde\Lambda$. Since $(-\lambda)^2 = \lambda^2$, the image of $\tilde\Lambda^{(2)}$ in $\PSL(2,\kinf)$ is $\Lambda^{(2)}$. The group $\Lambda/\Lambda^{(2)}$ is finitely generated of exponent $2$, hence finite. The \textbf{invariant trace field} $k\Lambda = \mathbf{Q}(\tr\tilde\Lambda^{(2)})$ and the \textbf{invariant quaternion algebra} $A\Lambda = A_0\tilde\Lambda^{(2)}$ are invariants of the wide commensurability class of $\Lambda$ \cite[\S 3.3]{Maclachlan.Reid-[ArithmeticHyperbolic3Manifolds]2003}. If $B$ is of Fuchsian or Kleinian type and $\Ocal$ is an order of $B$, then $k\Gamma_\Ocal = k$ and $A\Gamma_\Ocal \cong B$ \cite[Chapter 8]{Maclachlan.Reid-[ArithmeticHyperbolic3Manifolds]2003}.

Arithmeticity can be detected from traces. The following is Takeuchi's characterization \cite{Takeuchi-[CharacterizationArithmeticFuchsian]1975}, together with its Kleinian analogue \cite[\S 8.3]{Maclachlan.Reid-[ArithmeticHyperbolic3Manifolds]2003}.

\begin{theorem}\label{thm:takeuchi}
Let $\Lambda\leq\PSL(2,\kinf)$ be a lattice.
\begin{enumerate}
\item If $\kinf = \mathbf{R}$, then $\Lambda$ is derived from a quaternion algebra if and only if $\mathbf{Q}(\tr\tilde\Lambda)$ is a number field, $\tr\tilde\Lambda$ consists of algebraic integers, and $\sigma(\tr\tilde\Lambda)$ is bounded for every embedding $\sigma\neq\mathrm{id}$ of $\mathbf{Q}(\tr\tilde\Lambda)$ into $\mathbf{C}$.
\item If $\kinf = \mathbf{C}$, then $\Lambda$ is arithmetic if and only if $k\Lambda$ has exactly one complex place, $\tr\tilde\Lambda$ consists of algebraic integers, and $A\Lambda$ is ramified at every real place of $k\Lambda$. The last condition holds if $\sigma(\tr \tilde\Lambda^{(2)})$ is bounded for every real embedding $\sigma$ of $k\Lambda$.
\end{enumerate}
\end{theorem}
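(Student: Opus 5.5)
This is a cited result (Takeuchi, and \cite[\S 8.3]{Maclachlan.Reid-[ArithmeticHyperbolic3Manifolds]2003}). The plan is to reprove it from Lemma \ref{lemma:order-generated} and the decomposition \eqref{eqn:decomp}. In both parts the sufficiency direction runs the same way. Put $\Delta = \tilde\Lambda$ in case (1) and $\Delta = \tilde\Lambda^{(2)}$ in case (2). Let $k'$ be the field generated by $\tr\Delta$, which is $\mathbf{Q}(\tr\tilde\Lambda)$ or $k\Lambda$, with ring of integers $R'$, and let $A = A_0\Delta$.

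First, $A$ is a quaternion algebra over $k'$ spanned by $\Delta$. The traces lie in $R'$ because they are algebraic integers in $k'$. So Lemma \ref{lemma:order-generated} shows that $R'[\Delta]$ is an order, and $\Delta \le \Ocal^1$ for some maximal order $\Ocal$ of $A$.

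Second, show that $A$ is of Fuchsian or Kleinian type. Let $\sigma$ be a real embedding of $k'$ other than the identity place (in case (1)), or any real embedding (in case (2)), and suppose $A\otimes_\sigma\mathbf{R}\cong\M(2,\mathbf{R})$. Then $\sigma(\Delta)\le\SL(2,\mathbf{R})$. This group is Zariski dense, since $\Delta$ spans $A$ and is non-elementary; the latter holds because a finite index subgroup of a lattice is non-elementary, and $\Lambda/\Lambda^{(2)}$ is finite. A Zariski dense subgroup of $\SL(2,\mathbf{R})$ is not relatively compact, so it contains elements of unbounded trace, which contradicts the boundedness hypothesis. Hence $A$ is ramified at $\sigma$.

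In case (1), the same argument applied to a complex embedding $\sigma$ of $k'$ would give a Zariski dense subgroup of $\SL(2,\mathbf{C})$ with bounded traces, which is again impossible. Hence $k'$ is totally real. In case (2) the hypothesis already says that $k\Lambda$ has one complex place.

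Third, $\Delta\le\Ocal^1$ and $\Gamma_\Ocal$ is discrete, while the image of $\Delta$ is a lattice. A lattice contained in a discrete group has finite index in it. Thus $\Lambda$ is derived from $A$ in case (1), and in case (2) $\Lambda^{(2)}$, and hence $\Lambda$, is commensurable with $\Gamma_\Ocal$.

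For necessity, suppose $\Lambda$ is a finite index subgroup of $\Gamma_\Ocal$, or is commensurable with it. Then every element of $\tilde\Lambda$ has some power in $\pm\psi(\Ocal^1)$. Its eigenvalues are therefore roots of algebraic integers, so its trace is an algebraic integer. The remaining conditions come from \eqref{eqn:decomp}. For a derived group the trace field lies in $k$, and every nonidentity embedding of the trace field extends to an embedding of $k$. Each such embedding of $k$ is either a ramified real place, or, when it is not real, can be handled by first replacing $k$ by the trace field. At a ramified real place the element lands in $\mathcal{H}^1\cong\mathrm{SU}(2)$, so its traces lie in $[-2,2]$. In case (2), $k\Lambda=k$ and $A\Lambda\cong B$ by the invariants recalled above, which gives the stated conditions.

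The main obstacle is the identification of the trace field and its embeddings in case (1). One must check that $\mathbf{Q}(\tr\tilde\Lambda)$, for $\Lambda$ derived from $B$, has only real embeddings, each extending to a place of $k$ at which $B$ is ramified apart from the identity. Here I would use that a finite index subgroup of $\Gamma_\Ocal$ has invariant trace field $k$, so $k\subseteq\mathbf{Q}(\tr\tilde\Lambda)\subseteq k$ up to the square-root issue. I would resolve that issue by noting that for a derived group $\tilde\Lambda\subseteq\pm\psi(\Ocal^1)$, so the traces already lie in $k$.
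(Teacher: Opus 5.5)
You follow the standard proof of Takeuchi and of Maclachlan--Reid, \S 8.3: the order generated by $\Delta$ via Lemma \ref{lemma:order-generated}, archimedean ramification from bounded traces, and finite index in the discrete group $\Gamma_\Ocal$. The paper does not reprove the equivalences; it cites them and proves only the last sentence of (2). Your skeleton is sound, but the step showing that $k'$ is totally real in case (1) rests on a false claim. A Zariski dense subgroup of $\SL(2,\mathbf{C})$ \emph{can} have bounded traces. $\mathrm{SU}(2)$ is compact and Zariski dense in $\SL(2,\mathbf{C})$, and so is every subgroup of it that is not virtually solvable. An example is $\tau(\Ocal^1)$ for a ramified real place $\tau$ of an algebra of Fuchsian type over a field of degree at least $2$. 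So ``the same argument'' gives no contradiction for a complex embedding $\sigma$, because relative compactness is not absurd in $\SL(2,\mathbf{C})$.

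The contradiction has to come from the fact that $k'$ is generated by traces. Let $\mu$ be an eigenvalue of $\sigma(\delta)$. If $\sigma(\tr\delta^n) = \mu^n+\mu^{-n}$ is bounded in $n$, then $|\mu| = 1$, so $\sigma(\tr\delta) = \mu+\bar\mu\in[-2,2]$. Hence $\sigma(k') = \mathbf{Q}(\sigma(\tr\Delta))\subseteq\mathbf{R}$, contradicting that $\sigma$ is not real.

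The real case has a smaller hole. ``Not relatively compact, so it contains elements of unbounded trace'' does not follow as stated: the upper triangular unipotent group is unbounded, yet all its traces equal $2$. You can close it with the trace-form argument of Lemma \ref{lemma:order-generated}, replacing ``integral'' by ``bounded''. If $g_1,\dots,g_4\in\sigma(\Delta)$ span $\M(2,\mathbf{R})$, the coordinates of $g\in\sigma(\Delta)$ are obtained from $(\tr(gg_j))_j$ by a fixed invertible Gram matrix. So bounded traces make $\sigma(\Delta)$ bounded, hence conjugate into $\mathrm{SO}(2)$, whose span is a proper subalgebra. For complex $\sigma$ the same computation conjugates $\sigma(\Delta)$ into $\mathrm{SU}(2)$, which gives real traces and is a second way to repair the complex case.

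The paper argues differently. Bounded traces exclude hyperbolic elements; a subgroup of $\PSL(2,\mathbf{R})$ without hyperbolic elements fixes a point of $\mathbf{H}^2$ or $\partial\mathbf{H}^2$; and such stabilizers span proper subalgebras. Both routes need only that $\sigma(\Delta)$ spans $\M(2,\mathbf{R})$, which is automatic, so you can drop Zariski density. Your justification of it was incomplete anyway. Spanning does not imply it, since the normalizer of a torus spans. Transporting non-elementarity through $\sigma$ also needs the fact that proper algebraic subgroups of $\SL(2)$ are virtually solvable.

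Finally, the ``main obstacle'' in the necessity direction is not one. The field $k$ is totally real by the definition of Fuchsian type, and you only need $\mathbf{Q}(\tr\tilde\Lambda)\subseteq k$. Then every embedding $\sigma\neq\mathrm{id}$ of the trace field extends to an embedding $\tau\neq\nu_0$ of $k$ at which $B$ is ramified. This gives $\sigma(\tr\tilde\Lambda)\subseteq[-2,2]$ without any reference to the invariant trace field.
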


The sufficient condition in (2) is elementary. If $A\Lambda$ were split at a real place $\sigma$, then $\sigma(\tilde\Lambda^{(2)})$ would be a subgroup of $\SL(2,\mathbf{R})$ that spans $A\Lambda\otimes_\sigma\mathbf{R}\cong\M(2,\mathbf{R})$. If its traces are bounded, it contains no hyperbolic elements, since $|\tr g^n|\to\infty$ for hyperbolic $g$. A subgroup of $\PSL(2,\mathbf{R})$ without hyperbolic elements fixes a point of $\mathbf{H}^2$ or of $\partial\mathbf{H}^2$, by a standard argument. It is then conjugate into $\mathrm{SO}(2)$ or into the upper triangular group, and in either case it spans a proper subalgebra of $\M(2,\mathbf{R})$, a contradiction.

We shall use the following commensurability theorems. The first is due to Reid \cite{Reid-[IsospectralityCommensurabilityArithmetic]1992} and the second to Chinburg, Hamilton, Long and Reid \cite[Theorem 1.1 and Corollary 1.2]{Chinburg.Hamilton.Long.Reid-[GeodesicsCommensurabilityClasses]2008}.

\begin{theorem}\label{thm:reid}
Let $M_1$ and $M_2$ be closed orientable hyperbolic surfaces with the same length spectrum. If $M_1$ is arithmetic, then $M_1$ and $M_2$ are commensurable.
\end{theorem}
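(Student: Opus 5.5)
The plan is to follow Reid's strategy: translate equality of length spectra into equality of trace sets up to sign, use Theorem \ref{thm:takeuchi} to deduce that $M_2$ is arithmetic, and then show the two invariant quaternion algebras are isomorphic. Write $M_i = \Lambda_i\backslash\mathbf{H}^2$ with $\Lambda_i\leq\PSL(2,\mathbf{R})$ torsion free and cocompact. By \eqref{eq:tracelength}, equality of length spectra gives
\begin{align*}
\{\tr(\gamma)^2 : \gamma\in\tilde\Lambda_1\} = \{\tr(\gamma)^2 : \gamma\in\tilde\Lambda_2\}.
\end{align*}
The contribution of the identity is handled by the value $4$ on both sides. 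Since $\tr(\gamma^2) = \tr(\gamma)^2-2$, the sets of traces of squares also coincide.

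\emph{Step 1: invariant trace fields.} I would use the standard fact that for a non-elementary finitely generated group, $k\Lambda = \mathbf{Q}(\{\tr(\gamma)^2 : \gamma\in\tilde\Lambda\})$ \cite[\S 3.4]{Maclachlan.Reid-[ArithmeticHyperbolic3Manifolds]2003}. This fact is what lets us pass from traces of squares alone to the traces of arbitrary products of squares that generate $\tr\tilde\Lambda^{(2)}$. It then gives $k\Lambda_1 = k\Lambda_2 =: k$, and $k$ is a totally real number field.

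\emph{Step 2: arithmeticity of $\Lambda_2$.} I would apply Theorem \ref{thm:takeuchi}(1) to the finite index subgroup $\Lambda_2^{(2)}$. Its trace field is $k$, a number field. Each $\tr\gamma$ for $\gamma\in\tilde\Lambda_2$ is a root of $x^2-\tr(\gamma)^2$, and $\tr(\gamma)^2$ is an algebraic integer because it already occurs as $\tr(\gamma')^2$ for some $\gamma'\in\tilde\Lambda_1$. Hence $\tr\gamma$ is an algebraic integer, so traces of $\tilde\Lambda_2^{(2)}$ are integral. For boundedness, take a nontrivial embedding $\sigma$ of $k$. The values $\sigma(\tr(\gamma)^2)$ are bounded because they are the same values as for $\Lambda_1$, which is arithmetic. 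The boundedness for traces of products of squares then follows by the standard Takeuchi argument: a subgroup of $\SL(2,\mathbf{R})$ or $\mathrm{SU}(2)$ whose square-traces are bounded consists of elliptic elements and is hence relatively compact. Thus $\Lambda_2^{(2)}$ is derived from a quaternion algebra $A_2 = A\Lambda_2$ over $k$, and likewise $\Lambda_1^{(2)}$ from $A_1 = A\Lambda_1$.

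\emph{Step 3: $A_1\cong A_2$.} Both algebras are ramified at all real places of $k$ but one, so it remains to compare finite ramification. Suppose $\pfrak\in\Ram_f(A_1)\setminus\Ram_f(A_2)$. Because $A_2$ is split at $\pfrak$, strong approximation makes the image of $\Lambda_2^{(2)}$ dense in a compact open subgroup of $\SL(2,k_\pfrak)$. We can therefore choose $\gamma\in\tilde\Lambda_2^{(2)}$ whose characteristic polynomial $x^2-tx+1$ has two distinct roots in $k_\pfrak$. Then $k(\sqrt{t^2-4})$ is split at $\pfrak$. On the other hand, $t^2$ is a square-trace shared with $\Lambda_1$, so $k(\sqrt{t^2-4})$ is realized as the field generated by an eigenvalue of an element of $\tilde\Lambda_1$. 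Up to replacing $\gamma$ by $\gamma^2$ to remove the sign ambiguity, this field embeds in $A_1$. A quadratic field split at a ramified prime cannot embed in $A_1$, which is a contradiction. By symmetry $\Ram(A_1) = \Ram(A_2)$, and so $A_1\cong A_2$.

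\emph{Step 4: conclusion.} Two arithmetic Fuchsian groups with the same invariant trace field and isomorphic invariant quaternion algebras are commensurable up to conjugacy \cite[Chapter 8]{Maclachlan.Reid-[ArithmeticHyperbolic3Manifolds]2003}. Hence $M_1$ and $M_2$ are commensurable.

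The main obstacle is the sign bookkeeping in Steps 1 and 3. The length spectrum only sees $\tr(\gamma)^2$, so one must ensure that the relevant fields and embeddings depend only on square-traces. This is exactly why I would work throughout with $\Lambda^{(2)}$ and with squares $\gamma^2$.
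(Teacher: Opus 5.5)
The paper gives no proof of this statement; it quotes it from Reid's 1992 paper, so the comparison is with Reid's argument. Your proof is correct. Steps 1, 2 and 4 follow Reid's outline: equality of the sets $\{\tr(\gamma)^2\}$, Takeuchi's criterion applied to $\Lambda_2^{(2)}$, and the Maclachlan--Reid commensurability criterion.

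The genuine difference is in Step 3. The usual treatment of this step, following Reid, shows that the invariant quaternion algebra of an arithmetic Fuchsian group is determined by the eigenvalue fields $k(\lambda_\gamma)$ of its hyperbolic elements. One first builds, by weak approximation, a quadratic extension $L/k$ that splits at $\pfrak$, embeds in $A_2$, and is real at the identity place. One then realizes $L$ as some $k(\lambda_\gamma)$ with $\gamma\in\Lambda_2$, using a norm-one unit of infinite order in an order of $L$. You skip both constructions: strong approximation at the single prime $\pfrak$ directly gives an element of $\tilde\Lambda_2^{(2)}$ whose eigenvalue field splits at $\pfrak$. Your version is shorter. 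Reid's isolates the reusable fact that the set of eigenvalue fields determines the algebra.

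Three points in Step 3 should be made explicit.
\begin{enumerate}
\item The closure of the finite index subgroup $\tilde\Lambda_2^{(2)}$ of $\Ocal^1$ in $\Ocal^1_\pfrak\cong\SL(2,R_\pfrak)$ is closed of finite index, hence open. It therefore meets the open set of $g$ with $\tr(g)^2-4$ a nonzero square in $k_\pfrak$, which accumulates at $1$.
\item The field $L=k(\sqrt{t^2-4})$ is genuinely quadratic, because $A_2$ is a division algebra.
\item The squaring must be applied to the element $\gamma'\in\tilde\Lambda_1$ with $\tr\gamma'=\pm t$, not to $\gamma$. Then $\gamma'^2\in\tilde\Lambda_1^{(2)}\subseteq A_1$ and $k(\gamma'^2)\cong k(\sqrt{t^2(t^2-4)})=k(\sqrt{t^2-4})$, since $t\neq 0$.
\end{enumerate}

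Finally, the detour at the end of Step 2 is unnecessary, and the general claim behind it is false as stated: a unipotent subgroup of $\SL(2,\mathbf{R})$ has bounded traces but contains no elliptic elements. You do not need it. Since $\tilde\Lambda_2^{(2)}\subseteq\tilde\Lambda_2$ and its traces lie in $k$, you get $\sigma(\tr\gamma)^2=\sigma(\tr(\gamma)^2)\in[0,4]$ directly from the common set of square-traces.
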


\begin{theorem}\label{thm:chlr}
Let $M_1$ and $M_2$ be closed arithmetic hyperbolic $3$--manifolds. If $M_1$ and $M_2$ are isospectral, then they are commensurable.
\end{theorem}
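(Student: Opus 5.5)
The plan is to reduce commensurability to equality of arithmetic invariants. For arithmetic Kleinian groups, the wide commensurability class is determined by the pair $(k\Lambda, A\Lambda)$, with $A\Lambda$ taken up to isomorphism over $k\Lambda$ [Maclachlan--Reid, \S 8.4]. So with $\Lambda_i = \pi_1(M_i)$ it suffices to prove $k\Lambda_1 = k\Lambda_2$ (as subfields of $\mathbf{C}$, up to complex conjugation) and $A\Lambda_1\cong A\Lambda_2$.

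\textbf{Step 1: from the spectrum to lengths.} First pass from the Laplace spectrum to the length spectrum. For closed hyperbolic $3$--manifolds the Selberg trace formula shows that the Laplace spectrum determines the set of lengths $\ell(\gamma)$ of closed geodesics. It does not determine the complex lengths, which is the real difficulty. Passing to $\gamma^2$ (which doubles lengths) replaces $\Lambda_i$ by $\Lambda_i^{(2)}$. For $\gamma\in\tilde\Lambda_i^{(2)}$, the eigenvalue $u_\gamma = e^{\lambda/2}$ satisfies $u^2 - \tr(\gamma)u + 1 = 0$ with $\tr\gamma\in k_i := k\Lambda_i$. Moreover $e^{\ell(\gamma)} = |u_\gamma|^2 = u_\gamma\bar u_\gamma$.

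\textbf{Step 2: recovering the trace field.} I would show that the field $\mathbf{Q}(e^{\ell(\gamma)} : \gamma)$, or a field canonically attached to the numbers $e^{\ell}$ together with their minimal polynomials over $\mathbf{Q}$, recovers $k_i$. The idea uses that $k_i$ has exactly one complex place and that $A_i$ is ramified at all real places. For each real embedding $\sigma$, the element $\sigma(\gamma)$ lies in a compact group, so $\sigma(u_\gamma)$ has absolute value $1$. Hence among the Galois conjugates of $u_\gamma$ over $\mathbf{Q}$, only $u_\gamma^{\pm1}$ and $\bar u_\gamma^{\pm1}$ lie off the unit circle; in particular $u_\gamma$ is a Salem-type unit. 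Consequently $\bar u_\gamma$ is a conjugate of $u_\gamma$, and the field $\mathbf{Q}(u_\gamma\bar u_\gamma)$ is determined by $\ell(\gamma)$. One then analyzes the degree $[\mathbf{Q}(u\bar u):\mathbf{Q}]$ against $[k_i:\mathbf{Q}]$ and identifies $k_i$ via the compositum $\mathbf{Q}(e^{\ell})\cdot\mathbf{Q}(\text{its conjugates})$ over infinitely many $\gamma$. A Chebotarev/density choice of $\gamma$ with $k_i(u_\gamma)$ of maximal degree would make the identification generic. I expect this Galois bookkeeping, namely extracting $k_i$ (and not merely its maximal totally real subfield) from real lengths alone, to be the main obstacle. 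I would first try to show that $k_i$ is the unique subfield with one complex place over which the minimal polynomials of the $e^{\ell}$ factor compatibly.

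\textbf{Step 3: recovering the quaternion algebra.} With $k = k_1 = k_2$ fixed, I would determine $A_i$ through its quadratic subfields. The fields $L_\gamma = k(u_\gamma)$ are exactly quadratic extensions embedded in $A_i$. By Step 2, each such field is recovered from $\ell(\gamma)$, because $u_\gamma$ is determined up to conjugation by $e^{\ell}$ and its conjugates. Hence the sets $\{L_\gamma\}$ attached to $\Lambda_1$ and $\Lambda_2$ coincide. Now a quadratic extension $L/k$ embeds in $A$ iff no place of $\Ram(A)$ splits in $L$. If $\Ram_f(A_1)\neq\Ram_f(A_2)$, say $\pfrak\in\Ram(A_1)\setminus\Ram(A_2)$, then the Chinburg--Friedman embedding results and strong approximation give a loxodromic $\gamma\in\Lambda_2^{(2)}$ with $L_\gamma$ split at $\pfrak$. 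This $L_\gamma$ cannot embed in $A_1$, a contradiction. The archimedean ramification is the same for both (all real places), so $\Ram(A_1) = \Ram(A_2)$ and $A_1\cong A_2$, giving commensurability.
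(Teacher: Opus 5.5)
\textbf{There is a genuine gap.} The paper does not prove this statement; it quotes it from Chinburg, Hamilton, Long and Reid. Your overall frame is reasonable. You reduce to the pair $(k\Lambda,A\Lambda)$ via Maclachlan--Reid, you pass from the spectrum to the length set in Step~1, and you detect $A\Lambda$ through the quadratic fields $k(u_\gamma)$ in Step~3. The local argument at the end of Step~3 works once those fields are known. But the step that carries all the content is missing.

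\begin{enumerate}
\item Step~2 contains no argument. You say you \emph{would} recover $k\Lambda$ from the numbers $e^{\ell(\gamma)}$, and you identify the resulting Galois bookkeeping as the main obstacle without resolving it.
\item Step~3 rests on the assertion that $e^{\ell(\gamma)}=u_\gamma\bar u_\gamma$ determines $u_\gamma$ up to conjugation. This is not proved, and it is the whole difficulty. The spectrum gives you only the moduli $|u_\gamma|$. Suppose $u_{\gamma_1}\bar u_{\gamma_1}=u_{\gamma_2}\bar u_{\gamma_2}$, or a rational-power version of this, for eigenvalues of $\Lambda_1$ and $\Lambda_2$. To reach an actual relation between the eigenvalues themselves, such as $u_{\gamma_1}^m\in\{u_{\gamma_2}^n,\bar u_{\gamma_2}^{\,n}\}$ for suitable $m,n\geq 1$, you must exclude unexpected multiplicative relations. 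These are relations among Galois conjugates of two units, possibly from different fields, each having all conjugates other than $u^{\pm1},\bar u^{\pm1}$ on the unit circle.
\end{enumerate}
This passage from real lengths to eigenvalue data is what the cited theorem has to supply, and your proposal does not supply it.

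The assertion is in fact false as you state it. Any correct version must allow roots of unity and must use more than one length at a time. In $\mathbf{Q}(\zeta_8)$, take the units $u_1=i(1+\sqrt2)$ and $u_2=\zeta_8(1+\sqrt2)$.
\begin{itemize}
\item They have relative norm $1$ over $\mathbf{Q}(i)$ and over $\mathbf{Q}(\sqrt{-2})$ respectively.
\item Their traces are $u_1+u_1^{-1}=2i$ and $u_2+u_2^{-1}=2+\sqrt{-2}$.
\item $\mathbf{Q}(\zeta_8)$ embeds in the quaternion division algebra over $\mathbf{Q}(i)$ ramified at the two primes over $5$. It also embeds in the one over $\mathbf{Q}(\sqrt{-2})$ ramified at the two primes over $3$, since those primes are inert in $\mathbf{Q}(\zeta_8)$.
\item Hence $u_1$ and $u_2$ are eigenvalues of loxodromic elements of cocompact arithmetic Kleinian groups with invariant trace fields $\mathbf{Q}(i)$ and $\mathbf{Q}(\sqrt{-2})$.
\end{itemize}
Both have modulus $1+\sqrt2$, so the two geodesics have the same real length. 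Yet $2i$ and $2+\sqrt{-2}$ are not Galois conjugate, so neither $u_2$ nor $\bar u_2$ is a conjugate of $u_1$; in fact $u_1/u_2=\zeta_8$. Thus a single length determines neither $u_\gamma$ nor $k\Lambda$.

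A repaired Step~2 would go as follows.
\begin{enumerate}
\item Take $\gamma_2\in\tilde\Lambda_2^{(2)}$ none of whose powers has real trace. That such elements exist also needs an argument.
\item Take $\gamma_1\in\tilde\Lambda_1^{(2)}$ of the same length.
\item Use an eigenvalue relation of the kind above to match $\tr\gamma_1^m$ with $\tr\gamma_2^m$ or its complex conjugate.
\item Conclude $k\Lambda_1=k\Lambda_2$ or $\overline{k\Lambda_2}$. This follows because a number field with exactly one complex place has no proper non-real subfield.
\end{enumerate}
Only then does your Step~3 go through, after replacing $\Lambda_2$ by its complex conjugate if necessary.
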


\section{Spectral preliminaries}\label{sec:spectral}

Let $M$ be a closed Riemannian manifold of dimension $d$ with scalar curvature $s$, Ricci tensor $\operatorname{Ric}$ and curvature tensor $\operatorname{Rm}$. The heat trace has an asymptotic expansion $\sum_j e^{-\lambda_j t}\sim (4\pi t)^{-d/2}\sum_{i\geq 0} a_i(M) t^{i}$ as $t\to 0^+$, where
\begin{align*}
a_0 = \vol(M),\qquad a_1 = \frac{1}{6}\int_M s,\qquad a_2 = \frac{1}{360}\int_M \left(2|\operatorname{Rm}|^2 - 2|\operatorname{Ric}|^2 + 5 s^2\right),
\end{align*}
see \cite{Berger.Gauduchon.Mazet-[SpectreVarieteRiemannienne]1971}. Isospectral manifolds therefore have the same dimension, volume and coefficients $a_i$.

\begin{proposition}[{\cite[\S E.IV]{Berger.Gauduchon.Mazet-[SpectreVarieteRiemannienne]1971}, \cite{Tanno-[EigenvaluesLaplacianRiemannian]1973}}]\label{prop:curvature}
Let $X$ be a closed hyperbolic manifold of dimension $d\in\{2,3\}$ and let $M$ be a closed Riemannian manifold isospectral to $X$. Then $M$ has dimension $d$, constant sectional curvature $-1$, and $\vol(M) = \vol(X)$.
\end{proposition}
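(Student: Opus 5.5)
The proposition is the classical consequence of the heat invariants $a_0,a_1,a_2$ listed just before it, together with the fact that isospectral manifolds share their heat trace. I would first read off the dimension from the leading power $t^{-d/2}$ in the asymptotic expansion. Since the full heat trace $\sum_j e^{-\lambda_j t}$ is a spectral invariant, its small-time expansion is too, so $M$ has dimension $d$. Comparing the coefficients $a_0$ gives $\vol(M)=\vol(X)$. It remains to show that $M$ has constant curvature $-1$, using $a_1$ and $a_2$.

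For $d=2$, the coefficient $a_1=\frac16\int_M s$ equals $\frac{4\pi}{6}\chi(M)$ by Gauss--Bonnet, so $\chi(M)=\chi(X)$. Next I would use $a_2$. In dimension $2$ one has $|\operatorname{Rm}|^2=s^2$ and $|\operatorname{Ric}|^2=s^2/2$ up to the standard normalization, so $a_2$ is a fixed positive multiple of $\int_M s^2$. On $X$ we have $s\equiv -2$, so
\[
\int_M s^2=\int_X s^2=4\vol(X)=4\vol(M).
\]
Gauss--Bonnet gives $\int_M s=\int_X s=-2\vol(M)$. The Cauchy--Schwarz inequality then reads $\left(\int_M s\right)^2\le \vol(M)\int_M s^2$, which here is the equality $4\vol(M)^2=4\vol(M)^2$. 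Equality in Cauchy--Schwarz forces $s$ to be constant, hence $s\equiv -2$, and the Gaussian curvature is $-1$.

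For $d=3$, I would decompose the curvature tensor orthogonally into its Weyl, traceless Ricci and scalar parts. In dimension $3$ the Weyl tensor vanishes, so $|\operatorname{Rm}|^2$ is a combination of $|\operatorname{Ric}|^2$ and $s^2$. Writing $E=\operatorname{Ric}-\frac{s}{3}g$, the integrand of $a_2$ becomes $\alpha|E|^2+\beta s^2$ with explicit constants. I expect $\alpha<0$: Tanno's computation gives $2|\operatorname{Rm}|^2-2|\operatorname{Ric}|^2+5s^2=2|E|^2+\tfrac{5}{3}\cdot\ldots$, and the sign of the $|E|^2$ term needs care. If the signs are unfavorable, I would combine $a_1$ and $a_2$ with Cauchy--Schwarz on $s$ as in dimension $2$, and also use $a_3$ if needed, following Tanno. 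The cleanest route I would try first runs as follows. From $a_1$ we get $\int_M s=-6\vol(M)$, so Cauchy--Schwarz gives $\int_M s^2\ge 36\vol(M)$. Substituting into $a_2$ bounds $\int_M |E|^2$ with a definite sign. In Tanno's normalization the $|E|^2$ coefficient and the $s^2$ coefficient in $a_2$ have opposite signs, so the equality of the $a_2$ values forces both $E\equiv0$ and $s$ constant, which means constant curvature $-1$.

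The main obstacle is this sign bookkeeping in dimension $3$. There I would simply verify Tanno's coefficient computation, or cite \cite{Tanno-[EigenvaluesLaplacianRiemannian]1973} directly, which proves exactly that a $3$-manifold isospectral to a constant curvature one has the same constant curvature.
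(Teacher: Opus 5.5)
Your treatment of the dimension, the volume and the case $d=2$ is correct, and it is the paper's argument. The detour through Gauss--Bonnet is harmless but unnecessary, since $a_1$ already gives $\int_M s=-2V$.

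The case $d=3$ has a genuine gap. You never compute the integrand of $a_2$, and the sign pattern you finally invoke is false and would not suffice even if it were true. Since the Weyl tensor vanishes in dimension $3$, one has $|\operatorname{Rm}|^2 = 4|\operatorname{Ric}|^2 - s^2$. With $E=\operatorname{Ric}-\frac{s}{3}g$ one also has $|\operatorname{Ric}|^2 = |E|^2 + s^2/3$. Hence
\begin{align*}
2|\operatorname{Rm}|^2 - 2|\operatorname{Ric}|^2 + 5s^2 = 6|\operatorname{Ric}|^2 + 3s^2 = 6|E|^2 + 5s^2 .
\end{align*}
Both coefficients are positive, and that is exactly what the argument needs. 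Let $V$ be the common volume.
\begin{itemize}
\item From $a_1$ you get $\int_M s=-6V$, so Cauchy--Schwarz gives $\int_M s^2\geq 36V$.
\item From $a_2(M)=a_2(X)$ you get $6\int_M|E|^2+5\int_M s^2 = 180V$.
\item Together these force $\int_M|E|^2\leq 0$. So $E=0$, equality holds in Cauchy--Schwarz, and $s\equiv -6$.
\item Because $W=0$, the curvature tensor is then that of constant curvature $-1$.
\end{itemize}

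Your last sentence says the coefficients of $|E|^2$ and $s^2$ have opposite signs. In that case, $\int_M s^2\geq 36V$ together with $a_2(M)=a_2(X)$ would only give $\int_M|E|^2\geq 0$, which is vacuous. So the implication you state runs the wrong way. No appeal to $a_3$ is needed in dimension $3$: the invariants $a_0$, $a_1$ and $a_2$ already suffice. Citing Tanno is a legitimate fallback. As a proof, however, your $d=3$ step does not go through until the computation above is supplied.
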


\begin{proof}
Write $V$ for the common volume. If $d=2$, then $|\operatorname{Rm}|^2 = s^2$ and $|\operatorname{Ric}|^2 = s^2/2$, so $a_2 = \frac{1}{60}\int s^2$. For $X$ we have $s\equiv -2$, hence $\int_M s = -2V$ and $\int_M s^2 = 4V$. Then $(\int_M s)^2 = V\int_M s^2$, which is the equality case of the Cauchy--Schwarz inequality, so $s$ is constant on $M$, equal to $-2$. If $d = 3$, the Weyl tensor vanishes and $|\operatorname{Rm}|^2 = 4|\operatorname{Ric}|^2 - s^2$. Writing $\operatorname{Ric} = E + \frac{s}{3}g$ with $E$ trace free, the integrand of $a_2$ becomes $6|E|^2 + 5s^2$. For $X$ we have $E = 0$ and $s\equiv -6$, so $\int_M s = -6V$ and $\int_M(6|E|^2+5s^2) = 180V$. By Cauchy--Schwarz $\int_M s^2 \geq 36V$, whence $\int_M |E|^2 \leq 0$. Thus $E=0$ and equality holds in Cauchy--Schwarz, so $M$ is Einstein with $s \equiv -6$. In dimension three this means constant sectional curvature $-1$.
\end{proof}

If $M$ is in addition orientable, then by the Killing--Hopf theorem $M = \Lambda\backslash\mathbf{H}^d$ for a torsion free cocompact lattice $\Lambda\leq\PSL(2,\kinf)$. For a closed hyperbolic manifold $M$ we write $\LS(M)$ for its \textbf{length set}, the set of lengths of closed geodesics, and, when $d = 3$, $\CL(M)$ for the set of their complex lengths. The \textbf{length spectrum} is the length set counted with multiplicities, and similarly for the complex length spectrum.

\begin{theorem}[Huber \cite{Huber-[ZurAnalytischenTheorie]1959}, see also {\cite[Chapter 9]{Buser-[GeometrySpectraCompact]1992}}]\label{thm:huber}
Two closed orientable hyperbolic surfaces are isospectral if and only if they have the same length spectrum.
\end{theorem}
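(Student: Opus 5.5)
The plan is to derive both directions from the Selberg trace formula for a closed orientable hyperbolic surface $M = \Lambda\backslash\mathbf{H}^2$. I will use its heat kernel form. Write the eigenvalues as $\lambda_j = \tfrac14 + r_j^2$. For every $t>0$,
\begin{align*}
\sum_j e^{-\lambda_j t} = \vol(M)\, I(t) + \sum_{\gamma} \frac{\ell(\gamma_0)}{2\sinh(\ell(\gamma)/2)}\, \frac{e^{-t/4}}{(4\pi t)^{1/2}}\, e^{-\ell(\gamma)^2/4t},
\end{align*}
with the following notation:
\begin{enumerate}
\item $\gamma$ runs over the nontrivial conjugacy classes of $\Lambda$, and $\gamma_0$ is the primitive element underlying $\gamma$;
\item $I(t) = \frac{e^{-t/4}}{(4\pi t)^{3/2}}\int_0^\infty \frac{\rho\, e^{-\rho^2/4t}}{\sinh(\rho/2)}\,d\rho$ is a universal function.
\end{enumerate}
The geometric sum depends only on the length spectrum. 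This is because the conjugacy classes of length $L$ are counted by the multiplicity of $L$, and $\ell(\gamma_0)$ ranges over the divisors $L/k$ of $L$ that are themselves primitive lengths. I will call this geometric sum $G_M(t)$.

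\emph{Isospectral implies the same length spectrum.} Let $M$ and $M'$ be isospectral. The left-hand sides agree, and the volumes agree by the heat invariant $a_0$, or by Proposition \ref{prop:curvature}. Hence $G_M = G_{M'}$ for all $t>0$. Next I regard $G_M$ as a combination $\sum_L c_L\, t^{-1/2}e^{-L^2/4t}$ over the discrete set of lengths $L$. The functions $t^{-1/2}e^{-L^2/4t}$ for distinct $L$ are linearly independent, and the expansion is unique. To see this, compare the behaviour as $t\to 0^+$: the smallest $L$ dominates, so one peels off terms one at a time. Applying this to $G_M - G_{M'}$ shows that the coefficients $c_L$ agree. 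Here
\begin{align*}
c_L = \sum_{\gamma:\ \ell(\gamma)=L} \frac{\ell(\gamma_0)}{2\sinh(L/2)}.
\end{align*}
I recover the multiplicities by induction on $L$. Once the multiplicities of all lengths below $L$ are known, the contribution to $c_L$ of the non-primitive classes with $\ell(\gamma_0) = L/k$, $k\geq 2$, is known. The remainder equals $\frac{L}{2\sinh(L/2)}$ times the number of primitive classes of length $L$. The same argument gives every multiplicity, primitive or not, so the length spectra agree.

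\emph{Same length spectrum implies isospectral.} Now $G_M = G_{M'}$, so
\begin{align*}
\sum_j e^{-\lambda_j t} - \sum_j e^{-\lambda'_j t} = (\vol M - \vol M')\, I(t).
\end{align*}
The main obstacle is that the areas are not obviously equal, and I will show this first. The left side is a finite-or-convergent exponential sum. The constant terms $e^{0}$ cancel because both surfaces are connected. So, after cancellation, the left side is either identically zero or asymptotic to $c\,e^{-\mu t}$ as $t\to\infty$, with $c\neq 0$ and $\mu>0$ the smallest eigenvalue that does not cancel. On the right, by Laplace's method,
\begin{align*}
I(t)\sim C\,t^{-3/2}e^{-t/4}, \qquad C>0.
\end{align*}
This is a pure exponential times a nontrivial power of $t$, so it cannot equal $c\,e^{-\mu t}$ asymptotically. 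Hence $\vol M = \vol M'$. Then the two exponential sums coincide for all $t>0$. By uniqueness of generalized Dirichlet series, again peeling off the smallest exponent, the eigenvalues coincide with multiplicities.

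Throughout I take the trace formula and the absolute convergence of the geometric sum as standard, as in \cite[Chapter 9]{Buser-[GeometrySpectraCompact]1992}.
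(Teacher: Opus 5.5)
The paper does not prove this statement; it cites Huber and Buser, Chapter~9. Your argument is the standard heat-kernel Selberg trace formula proof given in that chapter, and it is correct. Your $t\to\infty$ comparison of $t^{-3/2}e^{-t/4}$ with a pure exponential correctly supplies the equality of areas needed for the converse direction when the genus is not assumed equal; the paper itself only uses the forward direction, where equal areas already follow from Proposition~\ref{prop:curvature}.
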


In dimension $3$ the Laplace spectrum determines the length set \cite{Duistermaat.Guillemin-[SpectrumPositiveElliptic]1975}, but the relation with the complex length spectrum is more subtle. The complex length spectrum and the representation spectrum of a closed hyperbolic $3$--manifold determine each other \cite{Salvai-[LaplaceComplexLength]2002}, and the representation spectrum determines the Laplace spectrum \cite{Kelmer-[RefinementStrongMultiplicity]2014}.

\section{Proof of the main theorems}\label{sec:main}

The following statement contains the arithmetic content of Theorems \ref{thm:A} and \ref{thm:B}.

\begin{proposition}\label{prop:key}
 Assume \textup{(H1)} and \textup{(H2)}. Let $\Lambda\leq\PSL(2,\kinf)$ be a lattice such that
\begin{enumerate}
\item[(i)] for every $\lambda\in\tilde\Lambda$ there is $\gamma\in\psi(\Ocal^1(\afrak))$ with $\tr\lambda = \pm\tr\gamma$,
\item[(ii)] $\Lambda$ is commensurable in the wide sense with $\Gamma_\Ocal$, and
\item[(iii)] $\vol(\Lambda\backslash\mathbf{H}^n) = \vol(\Gamma_\Ocal(\afrak)\backslash\mathbf{H}^n)$.
\end{enumerate}
Then there are a maximal order $\Ocal'$ of $B$ and $g\in\GL(2,\kinf)$ with $g\Lambda g^{-1} = \Gamma_{\Ocal'}(\afrak)$.
\end{proposition}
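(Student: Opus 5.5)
We plan to show that, after conjugating $\Lambda$ by some $g\in\GL(2,\kinf)$, the group $\tilde\Lambda$ lies inside $\pm\psi(\Ocal'^1(\afrak))$ for a maximal order $\Ocal'$. Equality will then follow from the volume hypothesis (iii) together with Lemma \ref{lemma:covolume}.

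First we would pass from $\Lambda$ to an object living in $B$. By (ii) and the invariance of the invariant trace field and invariant quaternion algebra under wide commensurability, $k\Lambda=k$ and $A\Lambda\cong B$. By (i), every trace of $\tilde\Lambda$ is $\pm$ a reduced trace of an element of $\Ocal^1(\afrak)$. Hence $\tr\tilde\Lambda\subseteq R$, so the trace field equals $k$. Then $A_0\tilde\Lambda$ is a quaternion algebra over $k$ containing $A\Lambda$, and comparing dimensions gives $A_0\tilde\Lambda=A\Lambda\cong B$. By Skolem--Noether applied to $\psi$, we can choose $g\in\GL(2,\kinf)$ such that $g\tilde\Lambda g^{-1}\subseteq\psi(B^1)$. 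Let $\Delta=\psi^{-1}(g\tilde\Lambda g^{-1})\le B^1$. This subgroup spans $B$ and has $\trd(\Delta)\subseteq R$. By Lemma \ref{lemma:order-generated}, $R[\Delta]$ is an order, and it lies in some maximal order $\Ocal''$.

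Next we would localize. For $\pfrak\nmid\afrak$ nothing further is needed. For each $\pfrak^{n}\,\|\,\afrak$, hypothesis (H2) says $B_\pfrak\cong\M(2,k_\pfrak)$. We may identify $\Ocal''_\pfrak=\M(2,R_\pfrak)$, which places $\Delta$ inside $\SL(2,R_\pfrak)$. Hypothesis (i) combined with Lemma \ref{lemma:congruence-traces} gives $\trd(\delta)\mp 2\in\afrak^2\subseteq\pfrak^{2n}$. Thus the closure of $\Delta$ in $\SL(2,R_\pfrak)$ satisfies $(\mathrm{T}_{n})$, and the dyadic part of (H2) is exactly the hypothesis of Theorem \ref{prop:local}. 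That theorem gives $\alpha_\pfrak\in\GL(2,k_\pfrak)$ with $\alpha_\pfrak\Delta\alpha_\pfrak^{-1}\subseteq\pm\SL(2,R_\pfrak,\pfrak^{n})$. Equivalently, $\Delta\subseteq\pm(1+\pfrak^n\Ocal'_\pfrak)$ for the maximal local order $\Ocal'_\pfrak=\alpha_\pfrak^{-1}\M(2,R_\pfrak)\alpha_\pfrak$. Proposition \ref{prop:local-global-lattices} glues these local orders with $\Ocal''_\pfrak$ at the remaining primes into a global maximal order $\Ocal'$. Using \eqref{eq:aO-local}, we then get $\Delta\subseteq\pm\Ocal'^1(\afrak)$, i.e.\ $g\Lambda g^{-1}\subseteq\Gamma_{\Ocal'}(\afrak)$.

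The main obstacle is the sign, and we do not yet see how to resolve it. Each element of $\Delta$ is congruent to $\pm1$ locally, with a sign that may depend on $\pfrak$. Since $\Delta$ already contains $-1$ (because $-1\in\tilde\Lambda$), the statement "$\Delta\subseteq\{\pm1\}\cdot\Ocal'^1(\afrak)$" is what we want, and it requires one global sign that works at all $\pfrak\mid\afrak$ simultaneously. Our first attempt would be to use that (i) supplies a single $\gamma$ with $\tr\lambda=\pm\tr\gamma$. That gives a single global sign $\epsilon$ with $\trd(\epsilon\delta)-2\in\afrak^2$ at every $\pfrak$. We would then check that the local conjugation of Theorem \ref{prop:local} can be run with the homomorphic sign $\epsilon$, in the spirit of Lemma \ref{lemma:signs}, so that $\epsilon\delta\in1+\pfrak^n\Ocal'_\pfrak$ holds for all $\pfrak$ at once. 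Only at primes with $n\le e$, where $\pm1$ coincide modulo $\pfrak^n$, is no care needed. If this step goes through, $g\Lambda g^{-1}$ is a subgroup of $\Gamma_{\Ocal'}(\afrak)$. By Lemma \ref{lemma:covolume} and (iii), the two have equal covolume, so the index is $1$ and $g\Lambda g^{-1}=\Gamma_{\Ocal'}(\afrak)$.
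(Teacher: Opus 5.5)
Your outline follows the paper's proof step for step: the trace field and $A_0\tilde\Lambda\cong B$, Skolem--Noether, Lemma \ref{lemma:order-generated}, Theorem \ref{prop:local} at each $\pfrak\mid\afrak$, local--global patching, and the covolume count. However, as you acknowledge, the argument is incomplete at the one point that needs an idea. You have local statements $\lambda\in\epsilon_\pfrak(1+\pfrak^{n_\pfrak}\Ocal'_\pfrak)$ whose signs $\epsilon_\pfrak$ may depend on $\pfrak$, and you must pass from these to $\lambda-\epsilon\in\afrak\Ocal'$ for a single sign $\epsilon$. Without this step you have not shown $\Delta\subseteq\pm\Ocal'^1(\afrak)$, so the index comparison via Lemma \ref{lemma:covolume} cannot be carried out.

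The remedy you suggest is rerunning the local theorem with a prescribed homomorphic sign. It is not needed, and it is not obviously available. The sign furnished by (i) is chosen element by element, and you have no a priori reason for it to be multiplicative on $\Delta$. Theorem \ref{prop:local} also gives you no control over which coset an element lands in.

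The paper fixes the signs after the fact, using traces alone. Fix $\lambda\in\Delta$ and let $\epsilon$ be the sign from (i), so $\trd\lambda=\epsilon\trd\gamma$ with $\gamma\in\Ocal^1(\afrak)$. Lemma \ref{lemma:congruence-traces} then gives $\trd\lambda-2\epsilon=\epsilon(\trd\gamma-2)\in\afrak^2$, hence $\trd\lambda\equiv 2\epsilon \bmod \pfrak^{2n_\pfrak}$ for every $\pfrak\mid\afrak$. Let $\epsilon_\pfrak$ be whatever local sign Theorem \ref{prop:local} produces. Applying \eqref{eq:local-congruence-trace} to $\lambda\in\epsilon_\pfrak(1+\pfrak^{n_\pfrak}\Ocal'_\pfrak)$ gives $\trd\lambda\equiv 2\epsilon_\pfrak\bmod\pfrak^{2n_\pfrak}$.

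If $\epsilon_\pfrak\neq\epsilon$, subtracting the two congruences yields $4\in\pfrak^{2n_\pfrak}$, that is $n_\pfrak\le v_\pfrak(2)$. In that case $-1\in 1+\pfrak^{n_\pfrak}\Ocal'_\pfrak$, so the two cosets $\pm(1+\pfrak^{n_\pfrak}\Ocal'_\pfrak)$ coincide. Either way $\lambda\in\epsilon(1+\pfrak^{n_\pfrak}\Ocal'_\pfrak)$ for all $\pfrak\mid\afrak$ with the same $\epsilon$. Then \eqref{eq:aO-local} gives $\lambda-\epsilon\in\afrak\Ocal'$, i.e.\ $\epsilon\lambda\in\Ocal'^1(\afrak)$. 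With this inserted, the rest of your argument goes through as written.
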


\begin{proof}
Write $\afrak = \prod_{\pfrak}\pfrak^{n_\pfrak}$.

\emph{Step 1: the quaternion algebra of $\tilde\Lambda$.} By (i) and Lemma \ref{lemma:congruence-traces}, $\tr\tilde\Lambda\subseteq R$, so $F = \mathbf{Q}(\tr\tilde\Lambda)$ is a subfield of $k$. By (ii) and the invariance of $k\Lambda$ and $A\Lambda$ under commensurability, $k\Lambda = k\Gamma_\Ocal = k$ and $A\Lambda\cong B$. Since $k\Lambda\subseteq F\subseteq k$, we get $F = k$. Both $A\Lambda = A_0\tilde\Lambda^{(2)}$ and $A_0\tilde\Lambda$ are quaternion algebras over $k$ and the first is contained in the second, so they are equal. Hence there is an isomorphism of $k$--algebras $\theta\colon A_0\tilde\Lambda\to B$.

\emph{Step 2: conjugating $\tilde\Lambda$ into $\psi(B^1)$.} The inclusion $A_0\tilde\Lambda\subseteq\M(2,\kinf)$ and $\psi\circ\theta$ are two $k$--algebra embeddings of $A_0\tilde\Lambda$ into $\M(2,\kinf)$. Both extend to isomorphisms $A_0\tilde\Lambda\otimes_k\kinf\to\M(2,\kinf)$ of central simple $\kinf$--algebras, so by the Skolem--Noether theorem there is $g_1\in\GL(2,\kinf)$ with $\psi(\theta(x)) = g_1xg_1^{-1}$ for all $x\in A_0\tilde\Lambda$. Since $\nrd\circ\theta = \det$, we get $g_1\tilde\Lambda g_1^{-1}\subseteq\psi(B^1)$. Conditions (i)--(iii) are invariant under conjugation, so we may replace $\Lambda$ by $g_1\Lambda g_1^{-1}$ and identify $\tilde\Lambda$ with the subgroup $\Delta = \psi^{-1}(\tilde\Lambda)$ of $B^1$.

\emph{Step 3: a maximal order containing $\Delta$.} The group $\Delta$ spans $B$ by Step 1 and has traces in $R$, so by Lemma \ref{lemma:order-generated} there is a maximal order $\Ocal_1$ with $\Delta\leq\Ocal_1^1$.

\emph{Step 4: local conjugation.} Let $\lambda\in\Delta$. By (i) there are $\epsilon\in\{\pm 1\}$ and $\gamma\in\Ocal^1(\afrak)$ with $\trd\lambda = \epsilon\trd\gamma$, and Lemma \ref{lemma:congruence-traces} gives
\begin{align}\label{eq:global-sign}
\trd\lambda - 2\epsilon = \epsilon(\trd\gamma - 2) \in\afrak^2.
\end{align}
Let $\pfrak\mid\afrak$. By (H2), $B$ is unramified at $\pfrak$, so by Proposition \ref{prop:local-orders} there is an isomorphism $B_\pfrak\cong\M(2,k_\pfrak)$ carrying $\Ocal_{1,\pfrak}$ onto $\M(2,R_\pfrak)$. It carries $\Ocal^1_{1,\pfrak}\cap\pm(1+\pfrak^{m}\Ocal_{1,\pfrak})$ onto $\pm\SL(2,R_\pfrak,\pfrak^m)$ for every $m$. By \eqref{eq:global-sign}, the image of $\Delta$ satisfies $(\mathrm{T}_{n_\pfrak})$, and by (H2) either $v_\pfrak(2)\leq 1$ or $n_\pfrak\leq v_\pfrak(2)$. Theorem \ref{prop:local} therefore provides $\alpha_\pfrak\in B_\pfrak^\times$ with
\begin{align}\label{eq:local-conj}
\alpha_\pfrak\Delta\alpha_\pfrak^{-1}\subseteq\pm\left(1+\pfrak^{n_\pfrak}\Ocal_{1,\pfrak}\right).
\end{align}

\emph{Step 5: globalization.} By Proposition \ref{prop:local-global-lattices} there is a maximal order $\Ocal'$ of $B$ with $\Ocal'_\pfrak = \alpha_\pfrak^{-1}\Ocal_{1,\pfrak}\alpha_\pfrak$ for $\pfrak\mid\afrak$ and $\Ocal'_\mathfrak{q} = \Ocal_{1,\mathfrak{q}}$ for $\mathfrak{q}\nmid\afrak$. Fix $\lambda\in\Delta$. If $\mathfrak{q}\nmid\afrak$, then $\lambda\in\Ocal_{1,\mathfrak{q}} = \Ocal'_\mathfrak{q}$. If $\pfrak\mid\afrak$, then by \eqref{eq:local-conj} there is $\epsilon_\pfrak\in\{\pm1\}$ with
\begin{align*}
\lambda\in\epsilon_\pfrak\,\alpha_\pfrak^{-1}\left(1 + \pfrak^{n_\pfrak}\Ocal_{1,\pfrak}\right)\alpha_\pfrak = \epsilon_\pfrak\left(1+\pfrak^{n_\pfrak}\Ocal'_\pfrak\right).
\end{align*}
By Proposition \ref{prop:local-global-lattices}(1), $\lambda\in\Ocal'$, hence $\lambda\in\Ocal'^1$. The signs are coherent: we claim that $\lambda\in\epsilon(1+\pfrak^{n_\pfrak}\Ocal'_\pfrak)$ for every $\pfrak\mid\afrak$, with the sign $\epsilon$ of \eqref{eq:global-sign}. If $n_\pfrak\leq v_\pfrak(2)$, then $-1\in 1+\pfrak^{n_\pfrak}\Ocal'_\pfrak$, so the two cosets $\pm(1+\pfrak^{n_\pfrak}\Ocal'_\pfrak)$ coincide and there is nothing to prove. Otherwise, the local form \eqref{eq:local-congruence-trace} of Lemma \ref{lemma:congruence-traces} gives $\trd \lambda\equiv 2\epsilon_\pfrak\bmod\pfrak^{2n_\pfrak}$, while $\trd\lambda\equiv 2\epsilon\bmod\pfrak^{2n_\pfrak}$ by \eqref{eq:global-sign}. If $\epsilon_\pfrak\neq\epsilon$, then $4\in\pfrak^{2n_\pfrak}$, that is $n_\pfrak\leq v_\pfrak(2)$, a contradiction. This proves the claim. Therefore $\lambda - \epsilon\in\Ocal'$ lies in $\pfrak^{n_\pfrak}\Ocal'_\pfrak$ for each $\pfrak\mid\afrak$, and by \eqref{eq:aO-local} $\lambda-\epsilon\in\afrak\Ocal'$, that is $\lambda\in\pm\Ocal'^1(\afrak)$. We have shown that $\Delta\subseteq\pm\Ocal'^1(\afrak)$, so $\Lambda\leq\Gamma_{\Ocal'}(\afrak)$.

\emph{Step 6: equality.} By (iii) and Lemma \ref{lemma:covolume}, $\Lambda$ and $\Gamma_{\Ocal'}(\afrak)$ have the same covolume, so $[\Gamma_{\Ocal'}(\afrak):\Lambda] = 1$.
\end{proof}

\begin{remark}\label{rem:type-number}
The maximal order $\Ocal'$ is produced by local patching, and nothing forces it to be conjugate to $\Ocal$. This is exactly the freedom exploited by Vign\'eras. If $t(B) = 1$, then $\Ocal' = \beta^{-1}\Ocal\beta$ for some $\beta\in B^\times$, and $\psi(\beta)g\Lambda g^{-1}\psi(\beta)^{-1} = \Gamma_\Ocal(\afrak)$.
\end{remark}

\subsection{Proof of Theorem \ref{thm:A}}

By (H1) and (H3), $\Gamma_\Ocal(\afrak)$ is a torsion free cocompact lattice in $\PSL(2,\mathbf{R})$, so $X = X_\Ocal(\afrak)$ is a closed orientable hyperbolic surface. It is arithmetic by definition. Let $M$ be a closed orientable Riemannian manifold isospectral to $X$. By Proposition \ref{prop:curvature} and the Killing--Hopf theorem, $M = \Lambda\backslash\mathbf{H}^2$ for a torsion free cocompact lattice $\Lambda\leq\PSL(2,\mathbf{R})$, and $\vol(M) = \vol(X)$. By Theorem \ref{thm:huber}, $M$ and $X$ have the same length spectrum, so by Theorem \ref{thm:reid} the surface $M$ is commensurable with $X$, hence with $\Gamma_\Ocal\backslash\mathbf{H}^2$. Let $\lambda\in\tilde\Lambda$. If $\lambda = \pm1$, condition (i) of Proposition \ref{prop:key} holds with $\gamma = 1$. Otherwise $\lambda$ is hyperbolic, and as $\LS(M) = \LS(X)$ there is a hyperbolic $\gamma\in\psi(\Ocal^1(\afrak))$ with $\ell(\gamma) = \ell(\lambda)$. By \eqref{eq:tracelength}, $|\tr\lambda| = |\tr\gamma|$, that is $\tr\lambda = \pm\tr\gamma$. Proposition \ref{prop:key} now provides a maximal order $\Ocal'$ such that $\Lambda$ is conjugate to $\Gamma_{\Ocal'}(\afrak)$ by an element of $\GL(2,\mathbf{R})$, so $M$ is isometric to $X_{\Ocal'}(\afrak)$. As $X_{\Ocal'}(\afrak)$ depends up to isometry only on the conjugacy class of $\Ocal'$, the spectral genus of $X$ has at most $t(B)$ elements. \qed

\subsection{Proof of Theorem \ref{thm:B}}

As before, $X = X_\Ocal(\afrak)$ is a closed orientable arithmetic hyperbolic $3$--manifold. If $M$ is isospectral to $X$, then by Proposition \ref{prop:curvature} it is a closed hyperbolic $3$--manifold with $\vol(M) = \vol(X)$, and $M = \Lambda\backslash\mathbf{H}^3$ with $\Lambda\leq\PSL(2,\mathbf{C})$ torsion free and cocompact. This is the first assertion.

Assume now that $\CL(M)\subseteq\CL(X)$. Every $\lambda\in\tilde\Lambda$ other than $\pm 1$ is loxodromic and has the complex length of some $\gamma\in\psi(\Ocal^1(\afrak))$, so $\tr\lambda = \pm\tr\gamma$ by \eqref{eq:tracelength}. This is condition (i) of Proposition \ref{prop:key}. In particular $\tr\tilde\Lambda\subseteq R$.

We check that $\Lambda$ is arithmetic using Theorem \ref{thm:takeuchi}(2). The traces of $\tilde\Lambda$ are algebraic integers. The field $k\Lambda\subseteq k$ is not contained in $\mathbf{R}$. Indeed, $\Lambda^{(2)}$ has finite index in $\Lambda$, so it is a lattice in $\PSL(2,\mathbf{C})$. If the traces of $\tilde\Lambda^{(2)}$ were real, then $\Lambda^{(2)}$, being non-elementary and discrete, would be conjugate into $\PSL(2,\mathbf{R})$ \cite[Chapter 3]{Maclachlan.Reid-[ArithmeticHyperbolic3Manifolds]2003}, and it would preserve a totally geodesic plane in $\mathbf{H}^3$, which is incompatible with finite covolume. If $k\Lambda$ had two complex places, they would extend to two distinct complex places of $k$, which is impossible. So $k\Lambda$ has exactly one complex place, namely the restriction of $\nu_0$. Let $\sigma$ be a real embedding of $k\Lambda$ and $\tau$ an embedding of $k$ extending $\sigma$. Then $\tau$ is real, because the complex embeddings of $k$ restrict to nonreal embeddings of $k\Lambda$. Since $B$ is ramified at $\tau$, the group $\tau(\Ocal^1)$ lies in the compact group $\mathcal{H}^1$, so $|\tau(\trd y)|\leq 2$ for $y\in\Ocal^1$. If $t\in\tr\tilde\Lambda^{(2)}\subseteq k\Lambda$, then by (i) we have $t = \pm\trd y$ for some $y\in\Ocal^1$, so $|\sigma(t)| = |\tau(\trd y)| \leq 2$. Thus $\sigma(\tr\tilde\Lambda^{(2)})$ is bounded for every real embedding $\sigma$ of $k\Lambda$, and $\Lambda$ is arithmetic.

By Theorem \ref{thm:chlr}, $M$ is commensurable with $X$, which is condition (ii) of Proposition \ref{prop:key}. Condition (iii) holds since $\vol(M) = \vol(X)$. Proposition \ref{prop:key} shows that $M$ is isometric to $X_{\Ocal'}(\afrak)$ for some maximal order $\Ocal'$. For the last assertion, let $M$ be a closed orientable hyperbolic $3$--manifold with the same complex length spectrum as $X$. By \cite{Salvai-[LaplaceComplexLength]2002,Kelmer-[RefinementStrongMultiplicity]2014}, $M$ is isospectral to $X$, and $\CL(M) = \CL(X)$ by assumption. \qed

\section{Applications to spectral rigidity}\label{sec:applications}

\subsection{Shimura curves over \texorpdfstring{$\mathbf{Q}$}{Q}}

\begin{proof}[Proof of Corollary \ref{cor:shimura}]
An indefinite quaternion division algebra over $\mathbf{Q}$ is of Fuchsian type. Since $\mathbf{Q}$ has narrow class number one, $t(B) = 1$ by Corollary \ref{cor:typenumberone}. Since $2$ is unramified in $\mathbf{Q}$, hypothesis (H2) is the assumption $\gcd(N,D) = 1$. Any prime factor $\ell$ of $N$ satisfies $e = 1 < c(\ell)$, so (H3) holds by Lemma \ref{lemma:torsion-free}. Corollary \ref{cor:C} applies.
\end{proof}

\subsection{Hurwitz surfaces}

Let $\Delta(2,3,7)\leq\PSL(2,\mathbf{R})$ be the orientation preserving subgroup of the group generated by reflections in the sides of a hyperbolic triangle with angles $\pi/2$, $\pi/3$ and $\pi/7$, so that
\begin{align*}
\Delta(2,3,7) \cong \langle a,b,c \mid a^2 = b^3 = c^7 = abc = 1\rangle.
\end{align*}
The orbifold $\Delta(2,3,7)\backslash\mathbf{H}^2$ has area $\pi/21$, the smallest area of any hyperbolic $2$--orbifold. A closed hyperbolic surface of genus $g\geq 2$ (that is, a closed Riemann surface with its hyperbolic metric) is a Hurwitz surface if and only if it is isometric to $\Gamma\backslash\mathbf{H}^2$ for a torsion free normal subgroup $\Gamma$ of $\Delta(2,3,7)$, in which case $\Delta(2,3,7)/\Gamma$ is its group of orientation preserving isometries, of order $84(g-1)$.

The group $\Delta(2,3,7)$ is arithmetic \cite{Takeuchi-[CharacterizationArithmeticFuchsian]1975}, and an explicit description is due to Elkies \cite{Elkies-[ShimuraCurveComputations]2000}, see also \cite{Katz.Schaps.Vishne-[HurwitzQuaternionOrder]2011}. Let $K = \mathbf{Q}(\eta)$ with $\eta = \zeta_7+\zeta_7^{-1}$, the maximal totally real subfield of $\mathbf{Q}(\zeta_7)$. Then $\eta^3+\eta^2-2\eta-1 = 0$, the field $K$ has discriminant $49$ and class number one, and its ring of integers is $\mathbf{Z}[\eta]$. The three real places of $K$ send $\eta$ to $2\cos(2\pi/7)>0$ and to $2\cos(4\pi/7), 2\cos(6\pi/7)<0$. We use the first to embed $K$ in $\mathbf{R}$. The units $-1$, $\eta$ and $1+\eta$ have sign vectors $(-,-,-)$, $(+,-,-)$ and $(+,+,-)$ at these places, so the units of $K$ realize every sign pattern. Let
\begin{align*}
B = \left(\frac{\eta,\eta}{K}\right) = K\langle i,j \mid i^2 = j^2 = \eta,\ ji = -ij\rangle.
\end{align*}
Then $B$ is ramified exactly at the two real places at which $\eta<0$ \cite{Elkies-[ShimuraCurveComputations]2000}, so $B$ is of Fuchsian type with $\Ram_f(B) = \emptyset$. By Corollary \ref{cor:typenumberone}, $t(B) = 1$. Put $j' = \frac{1}{2}\left(1+\eta i+(\eta^2+\eta+1)j\right)$. Then $\Ocal = \mathbf{Z}[\eta][i,j,j']$ is a maximal order of $B$, and the elements
\begin{align*}
g_2 = \frac{ij}{\eta}, \qquad g_3 = \frac{1}{2}\left(1+(\eta^2-2)j+(3-\eta^2)ij\right), \qquad g_7 = \frac{1}{2}\left(\eta^2+\eta-1+(2-\eta^2)i+(\eta^2+\eta-2)ij\right)
\end{align*}
of $\Ocal^1$ satisfy $g_2^2 = g_3^3 = g_7^7 = -1$ and $g_7g_3g_2 = -1$. The images of $g_2$, $g_3$ and $g_7$ in $\PSL(2,\mathbf{R})$ are elliptic of orders $2$, $3$ and $7$ with product $1$. They generate a copy of $\Delta(2,3,7)$ inside the discrete group $\Gamma_\Ocal$ \cite{Elkies-[ShimuraCurveComputations]2000}. Since $\Delta(2,3,7)$ has the smallest covolume among all Fuchsian groups, $\Gamma_\Ocal = \Delta(2,3,7)$.

\begin{proof}[Proof of Corollary \ref{cor:hurwitz}]
Since $B$ is unramified at every finite prime and $2$ is inert in $K$, hypotheses (H1) and (H2) hold for every nonzero ideal $\afrak$. The only prime ramified in $K$ is $7 = \pfrak_7^3$, with $e = 3 < 6$. Hence every prime $\pfrak$ of $K$ satisfies $e(\pfrak\mid\ell)<c(\ell)$, and (H3) holds for every $\afrak \neq R$ by Lemma \ref{lemma:torsion-free}. Thus $X(\afrak) = X_\Ocal(\afrak)$ is a closed surface which normally covers the orbifold $\Delta(2,3,7)\backslash\mathbf{H}^2$, so it is a Hurwitz surface. As $t(B) = 1$, it is spectrally rigid by Corollary \ref{cor:C}.

The prime $\pfrak_7 = (2-\eta)$ has norm $7$. By strong approximation \cite[Theorem 7.7.5]{Maclachlan.Reid-[ArithmeticHyperbolic3Manifolds]2003}, we have $\Gamma_\Ocal/\Gamma_\Ocal(\pfrak_7)\cong\PSL(2,\mathbf{F}_7)$, which has order $168 = 84\cdot 2$. So $X(\pfrak_7)$ is the Hurwitz surface of genus $3$, which is Klein's quartic. The prime $13\equiv -1\bmod 7$ splits completely in $K$ into three primes of norm $13$. The corresponding surfaces have isometry group $\PSL(2,\mathbf{F}_{13})$ of order $1092 = 84\cdot 13$, so genus $14$. They form the first Hurwitz triplet. Finally, $(2)$ is a prime of norm $8$, and $X((2))$ has isometry group $\PSL(2,\mathbf{F}_8)$ of order $504 = 84\cdot 6$, so it is the Hurwitz surface of genus $7$, the Fricke--Macbeath surface.
\end{proof}

\begin{remark}
Hurwitz surfaces that are not principal congruence covers, which exist in abundance, are not covered by our methods, so Reid's question remains open in general.
\end{remark}

\subsection{The minimal volume hyperbolic \texorpdfstring{$3$}{3}--orbifold}

The three dimensional analogues of Hurwitz surfaces are the regular manifold covers of the hyperbolic $3$--orbifold of smallest volume. This orbifold was identified by Gehring--Martin \cite{Gehring.Martin-[MinimalCoVolumeHyperbolic]2009}, Marshall--Martin \cite{Marshall.Martin-[MinimalCoVolumeHyperbolic]2012}, and it is arithmetic \cite{Chinburg.Friedman-[SmallestArithmeticHyperbolic]1986}. It is $\Gamma_{\min}\backslash\mathbf{H}^3$, where $\Gamma_{\min}$ is the $\mathbf{Z}/2$ extension of the orientation preserving subgroup
\begin{align*}
 T = \left\langle x, y, z \mid x^2=y^2=z^3=(y z)^2=(z x)^5=(x y)^3=1\right\rangle
\end{align*}
of the Coxeter group $[3,5,3]$ obtained by adjoining the rotational symmetry $w$ of the tetrahedron. Setting $a = wy$ and $b = z$ yields the presentation
\begin{align*}
\Gamma_{\min} = \left\langle a, b \mid a^2=b^3=1,\ c=(a b)^2\left(a b^{-1}\right)^2,\ c^5=1,\ \left(b^{-1} c^2\right)^2=1\right\rangle .
\end{align*}

The invariant trace field of $\Gamma_{\min}$ is $k = \mathbf{Q}(t)$ with $t^4-2t^3+t-1 = 0$. This is the unique quartic field with exactly one complex place and discriminant $-275 = -5^2\cdot 11$, and it has class number one. The polynomial has two real roots $t\approx 1.8668$ and $t\approx -0.8668$, so the unit $t$ has opposite signs at the two real places, and the units of $k$ realize every sign pattern. The invariant quaternion algebra $A$ is the quaternion algebra over $k$ ramified exactly at its two real places \cite{Chinburg.Friedman-[SmallestArithmeticHyperbolic]1986}. It is of Kleinian type with $\Ram_f(A) = \emptyset$, and $t(A) = 1$ by Corollary \ref{cor:typenumberone}. If $\Ocal$ is a maximal order of $A$, then $\Gamma_\Ocal\cong T$ has index two in $\Gamma_{\min}$, and $\Gamma_{\min}$ is the image of $\Ocal^\times$ in $\PGL(2,\mathbf{C})$ \cite{Chinburg.Friedman-[SmallestArithmeticHyperbolic]1986}. Since $\Ocal^\times$ normalizes $\Ocal$, each $\Gamma_\Ocal(\afrak)$ is normal in $\Gamma_{\min}$.

\begin{corollary}\label{cor:min-orbifold}
Let $\afrak\neq R$ be a nonzero ideal of the ring of integers $R$ of $k$. Then $X_\Ocal(\afrak)$ is a closed hyperbolic $3$--manifold that regularly covers $\Gamma_{\min}\backslash\mathbf{H}^3$. Every closed orientable Riemannian manifold $M$ that is isospectral to $X_\Ocal(\afrak)$ and satisfies $\CL(M)\subseteq\CL(X_\Ocal(\afrak))$ is isometric to $X_\Ocal(\afrak)$.
\end{corollary}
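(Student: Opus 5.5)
The plan is to deduce Corollary \ref{cor:min-orbifold} from Theorem \ref{thm:B} and Corollary \ref{cor:C}. First I will check hypotheses (H1)--(H3) for $A$, $\Ocal$ and an arbitrary $\afrak\neq R$; then I will record the covering statement separately.

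\textbf{(H1).} This holds because $A$ is of Kleinian type: $k$ has exactly one complex place and $A$ is ramified at both real places.

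\textbf{(H2).} Since $\Ram_f(A)=\emptyset$, no prime factor of $\afrak$ ramifies. For the dyadic condition I need that $2$ is unramified in $k$. The discriminant of $k$ is $-5^2\cdot 11$, which is odd, so only $5$ and $11$ ramify. Hence $e(\pfrak\mid 2)=1$ for every $\pfrak\mid 2$, and (H2) holds for every $\afrak$.

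\textbf{(H3).} I will use Lemma \ref{lemma:torsion-free}, which requires a prime $\pfrak\mid\afrak$ of residue characteristic $\ell$ with $e(\pfrak\mid\ell)<c(\ell)$. Since $\afrak\neq R$, it has some prime factor $\pfrak$.
\begin{itemize}
\item If $\ell\notin\{5,11\}$, then $e=1<c(\ell)$ (note $c(2)=2$, $c(3)=2$).
\item If $\ell=11$, then $e\le 4<10$, since $[k:\mathbf{Q}]=4$.
\item If $\ell=5$, then $5^2\,\|\,\disc(k)$. A totally ramified tame prime would contribute $5^{3}$, so $e\le 3<4=c(5)$.
\end{itemize}
In all cases $e(\pfrak\mid\ell)<c(\ell)$. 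This is the step I expect to need the most care, namely pinning down the ramification at $5$. If the tame-ramification bound proves awkward, the fallback is the exact decomposition of $5$ in $k$, read off from the factorization of $t^4-2t^3+t-1$ modulo $5$. Either way the lemma gives that $\Gamma_\Ocal(\afrak)$ is torsion free, so $X_\Ocal(\afrak)$ is a closed hyperbolic $3$--manifold because $A$ is a division algebra.

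\textbf{Regular covering.} The text before the corollary records that $\Gamma_\Ocal(\afrak)$ is normal in $\Gamma_{\min}$, as the image of $\Ocal^\times$ normalizes $\Ocal$ and hence $\Ocal^1(\afrak)$. It also has finite index, since $[\Gamma_\Ocal:\Gamma_\Ocal(\afrak)]<\infty$ and $[\Gamma_{\min}:\Gamma_\Ocal]=2$. So $X_\Ocal(\afrak)\to\Gamma_{\min}\backslash\mathbf{H}^3$ is a regular cover.

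\textbf{Spectral statement.} Let $M$ be isospectral to $X_\Ocal(\afrak)$ with $\CL(M)\subseteq\CL(X_\Ocal(\afrak))$. Theorem \ref{thm:B} gives that $M$ is isometric to $X_{\Ocal'}(\afrak)$ for some maximal order $\Ocal'$. Since $t(A)=1$, we have $\Ocal'=\beta\Ocal\beta^{-1}$ for some $\beta\in A^\times$. Then $\Gamma_{\Ocal'}(\afrak)=\psi(\beta)\Gamma_\Ocal(\afrak)\psi(\beta)^{-1}$, so $X_{\Ocal'}(\afrak)$ is isometric to $X_\Ocal(\afrak)$, as noted in Remark \ref{rem:type-number}.
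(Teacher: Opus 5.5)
Your proposal is correct and follows the paper's proof. You check (H1)--(H3) from the ramification of $k$ (only $5$ and $11$ ramify, with $e(\pfrak\mid\ell)<c(\ell)$ at every prime) together with Lemma \ref{lemma:torsion-free}, and then combine Theorem \ref{thm:B} with $t(A)=1$. The only difference is cosmetic: you bound $e$ at $11$ by the degree and rule out total ramification at $5$ (tameness there is automatic since $5\nmid 4$), whereas the paper applies the tame discriminant formula at both primes.
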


\begin{proof}
The primes ramified in $k$ lie over $5$ and $11$. Wild ramification over a prime $\ell$ would force $v_\ell(-275)\geq\ell$, so the ramification is tame and $v_\ell(-275) = \sum_{\pfrak\mid\ell}(e(\pfrak\mid\ell)-1)f(\pfrak\mid\ell)$. As $v_5(-275) = 2$ and $v_{11}(-275) = 1$, every prime over $5$ has ramification index at most $3<4$, and every prime over $11$ has ramification index at most $2<10$. All other primes are unramified, with $e = 1 < c(\ell)$. In particular $2$ is unramified in $k$, so (H2) holds for every $\afrak$ because $\Ram_f(A) = \emptyset$, and (H3) holds by Lemma \ref{lemma:torsion-free}. Hypothesis (H1) was verified above. By Theorem \ref{thm:B}, $M$ is isometric to $X_{\Ocal'}(\afrak)$ for some maximal order $\Ocal'$, and since $t(A) = 1$, this manifold is isometric to $X_\Ocal(\afrak)$.
\end{proof}


\medskip
\noindent
Justin Katz: Ohio State University, \texttt{jukatz93@gmail.com}

\medskip
\noindent
D. B. McReynolds: Purdue University, \texttt{dmcreyno@purdue.edu}

\end{document}